\newtheorem{theorem}{Theorem}[section]
\newtheorem{proposition}[theorem]{Proposition}
\newtheorem{lemma}[theorem]{Lemma}
\newtheorem{remark}[theorem]{Remark}
\newcommand\abs[1]{\left|#1\right|}
\newcommand\norm[1]{\left\Vert#1\right\Vert}
\newcommand{\NN}{\mathbb N}
\newcommand{\ZZ}{\mathbb Z}
\newcommand{\RR}{\mathbb R}
\newcommand{\R}{\mathbb R}
\newcommand{\CC}{\mathbb C}
\newcommand{\red}{\textcolor{red}}
\newcommand{\blue}{\textcolor{blue}}
\newcommand{\edproof}{ $\hfill {\Box}$}
\newcommand{\Marta}[1]{\marginpar{\blue{\scriptsize\textbf{Marta:}#1}}}
\numberwithin{equation}{section}
\begin{document}

\title[Discrete Besov spaces, discrete Laplacian and non-local operators]{Discrete Besov spaces via semigroups associated to the discrete Laplacian and regularity of non-local operators}

\author[L. Abadias]{Luciano Abadias}
\address{\newline
    Luciano Abadias \newline
    Departamento de Matemáticas,
    Instituto Universitario de Matemáticas y Aplicaciones,
    Universidad de Zaragoza,
    50009 Zaragoza, Spain.}
\email{labadias@unizar.es}

\author[M. De León-Contreras]{Marta De León-Contreras}
\address{\newline
     Marta De León-Contreras\newline
     Departamento de Análisis Matemático,
Universidad de La Laguna, Avda. Astrofísico Sánchez, s/n 38206 La Laguna (Sta. Cruz de Tenerife), Spain.}
\email{mleoncon@ull.edu.es}

\author[A. Mahillo]{Alejandro Mahillo}
\address{\newline
    Alejandro Mahillo \newline
    Departamento de Matemáticas,
    Instituto Universitario de Matemáticas y Aplicaciones,
    Universidad de Zaragoza,
    50009 Zaragoza, Spain.}
\email{almahill@unizar.es}

\thanks{The first and third author have been partially supported by Project PID2022-137294NB-I00, DGI-FEDER, of the MCYTS and Project E48-23R, D.G. Aragón, Universidad de Zaragoza, Spain. The second author has been  partially supported by grant  PID2019-106093GB-I00 from the Spanish Government and also  by the Spanish MINECO
through  Juan de la Cierva fellowship FJC2020-044159-I}

\subjclass[2020]{26A16, 35R11, 35B65, 35K08, 39A12, 47D07}

\keywords{Discrete Besov spaces, discrete heat and Poisson semigroups,  regularity properties, discrete fractional  Laplacian.}

\begin{abstract}
  In this paper we prove characterizations of the discrete Besov  spaces in terms of the heat and Poisson semigroups  associated with the discrete Laplacian that will allow us to prove regularity results for the fractional powers of the discrete Laplacian and the discrete Bessel potentials. Moreover, we provide new estimates for the derivatives of the discrete heat kernel and semigroup which are of independent interest.
\end{abstract}

\date{}

\maketitle
\section{Introduction}

Besov spaces are spaces of functions with certain smoothness degree which generalize  H\"older spaces and play an important role in PDEs, mathematical physics and functional analysis.  These spaces on $\RR^n$ and on domains  were introduced between 1959 and 1979 and can be viewed as real interpolation spaces in the scale of Triebel-Lizorkin spaces, see \cite{Triebel1}. Providing suitable characterizations of these spaces in the continuous setting has been a central topic of study  for many authors in the last 60 years, and there is an extensive literature in the topic, see for instance the impressive series of books \cite{Triebel1, Triebel2,Triebel3}, which collect most of the theory until 2006, and some more recent works like \cite{Bruno,BPV,BDXY,Bui,LPT,Moura,YY,YY2} and the references therein. In particular, obtaining characterizations of functional spaces in terms of the heat and/or Poisson semigroups is very convenient in order to get regularity results for non-local operators, see for instance \cite{DeLeonContrerasTorrea2021,DeLeonContrerasTorrea2020,GU,MSTZ, ST3, Taibleson2, Taibleson1964,Taibleson3}.

 Much less is known in the discrete case, and some of the main difficulties in this setting rely on the fact that the discrete heat kernel does not satisfy the usual hypothesis needed to get these type of characterizations, such as Gaussian estimates and the H\"older continuity. Despite this fact, recently, a characterization of the discrete H\"older spaces was given in terms of semigroups associated to the discrete Laplacian, see \cite{AbadiasDeLeonContreras2022}, and this characterization allows the authors to get regularity results for the fractional powers of the discrete Laplacian in a more systematic way than in \cite{CiaurriRoncalStingaTorreaVarona2018} and for a wider range of powers. {Also, in \cite{Bui2}, the author introduced the discrete Besov space $
\dot{B}_{p,1}^0(\ZZ^d)$ via the heat semigroup associated with the discrete Laplacian and proved maximal regularity estimates for the discrete parabolic heat equation.}

In this paper we prove a complete characterization of the discrete Besov spaces, that we will introduce in the following lines, in terms of the heat and Poisson semigroups associated with the discrete Laplacian. Thanks to this characterization we  obtain the regularity results in these spaces for fractional operators related to the discrete Laplacian.

For $f: \ZZ \to \RR$, consider the discrete derivatives from the right and from the left,
\[
\delta_{right} f(n) =  f(n) - f(n+1), \qquad \delta_{left} f(n) =  f(n) - f(n-1), \quad n \in \ZZ.
\]
Observe that $\delta_{right}\delta_{left} f = \delta_{left} \delta_{right} f$, so every combination of these operators
is not affected by the order when they are applied. We will use the notation $\delta_{right}^l$ and $\delta_{left}^l$ to denote the
$l$-fold composition of the operator, $l\in\NN$,  
being $\delta_{right}^0 f = f$ and $\delta_{left}^0 f = f$. Moreover, since
the $\ell^p(\ZZ)$-norms are invariant under translations, we have that for $f: \ZZ \to \RR$,
$\norm{\delta_{right}f}_p = \norm{\delta_{left} f}_p.$ Therefore,
we shall state and prove  our results  for the $\delta_{right}$ operator. 

 Let $\alpha > 0$ be a non-natural number, $l = [\alpha]$ the integer part of $\alpha,$ and $1 \leq p,q \leq \infty$. We define the  {\it discrete Besov spaces}, also called  discrete ``generalized'' Hölder spaces,   as 
$$C^{\alpha,p,q}(\ZZ):=\biggl\{  f:\ZZ \to \RR\,:\, \sum_{j\neq 0} \biggl\| \frac{\delta^l_{right}f(\cdot+j)-\delta^l_{right}f(\cdot) }{|j|^{\alpha-l}} \biggr\|_p^q\frac{1}{|j|}<\infty  \biggr\},\quad 1\leq q<\infty,$$ and $$C^{\alpha,p,\infty}(\ZZ):=\biggl\{  f:\ZZ \to \RR\,:\, \sup_{j\neq 0} \biggl\| \frac{\delta^l_{right}f(\cdot+j)-\delta^l_{right}f(\cdot) }{|j|^{\alpha-l}} \biggr\|_p<\infty  \biggr\}.$$

Observe that $\ell^p(\ZZ)$ functions are trivially in $C^{\alpha,p,q}(\ZZ)$. Furthermore, in Lemma \ref{lemma:C_space_bound_growth} it will be proved that  the functions belonging to these spaces satisfy  $\frac{f}{1+\abs{\cdot}^\alpha} \in \ell^q(\ZZ,\mu)$, being $\ell^q(\ZZ,\mu)$ the weighted {$\ell^q$}-space with weight  $\mu=\sum_{n\in\ZZ}\frac{1}{1+|n|} \delta_{n}$ and $\delta_n$ the Dirac measure in $n.$ 
 This size condition will be the starting point to define the following spaces of functions.
 
 For $\alpha \in \NN$ and $1 \leq p,q \leq \infty,$ we introduce the \emph{``generalized'' Zygmund classes} by
\begin{multline*}
Z^{\alpha,p,q}(\ZZ) :=  \left\{ f:\ZZ \to \RR \, : \, \frac{f}{1+\abs{\cdot}^\alpha} \in \ell^q(\ZZ,\mu) 
\text{ and }\right.\\                      
\left. \sum_{j\neq 0} \biggl\| \frac{ \delta^{\alpha-1}_{right}f(\cdot-j)- 2\delta^{\alpha-1}_{right}f(\cdot) + \delta^{\alpha-1}_{right}f(\cdot+j) }{|j|} \biggr\|_p^q\frac{1}{|j|}<\infty  \right\}, \:\; 1\leq q<\infty, 
\end{multline*}
and 
\begin{multline*}
Z^{\alpha,p,\infty}(\ZZ) :=  \left\{ f:\ZZ \to \RR \, : \, \frac{f}{1+\abs{\cdot}^\alpha} \in \ell^{\infty}(\ZZ) 
\text{ and }\right.\\                      
\left. \sup_{j\neq 0} \biggl\| \frac{ \delta^{\alpha-1}_{right}f(\cdot-j)- 2\delta^{\alpha-1}_{right}f(\cdot) + \delta^{\alpha-1}_{right}f(\cdot+j) }{|j|} \biggr\|_p <\infty  \right\}.
\end{multline*}
Observe that when $p=q=\infty$, $\ell^\infty(\ZZ,\mu)=\ell^\infty(\ZZ)$ so $C^{\alpha,\infty,\infty}(\ZZ)$ are the discrete H\"older spaces and  $Z^{\alpha,\infty,\infty}(\ZZ)$  are the discrete Zygmund spaces treated in \cite{AbadiasDeLeonContreras2022}.

For $1 \leq p\leq \infty$, we consider the mixed-norm spaces 
$$
L^{q}(((0,\infty), dt/t); \ell^p(\ZZ)) = \left\{ f: (0,\infty) \to \ell^p(\ZZ) \ :\  \norm{f}_{p,q} < \infty \right\},
$$
where
$$
\norm{f}_{p,q} = \left( \int_0^\infty \norm{f(t)}_p^q  \, \frac{dt}{t} \right)^{1/q}, \quad 1 \leq q < \infty,
$$
and
$$
\norm{f}_{p,\infty} = \inf\left\{\alpha>0\,:\, \int_{\{t>0\,:\, ||f(t)||_p>\alpha\}  }\,\frac{dt}{t}=0\right\}.
$$
It can be shown that the spaces   $L^{q}(((0,\infty), dt/t); \ell^p(\ZZ))$, $1\le p,q\le \infty,$ are Banach spaces under this norm, see \cite{BenedekPanzone1961}, and  the norms $\norm{f}_{p,\infty}$ and $\sup_{t > 0 } \norm{f(t)}_p$ coincide.

Finally, we shall introduce Besov spaces in terms of semigroups associated with the discrete Laplacian, $\Delta_d$, defined for each $f:\ZZ\to\R$ as
$$(\Delta_d  f)(n):=f(n+1)-2f(n)+f(n-1),\quad n\in\ZZ.$$ It is well-known that the heat semigroup associated with $\Delta_d$ is the solution to the discrete heat problem,
\begin{equation*}
\left\{\begin{array}{ll}
\partial_t u(t,n)-\Delta_d u(t,n)=0,&n\in\ZZ,\,t\geq 0,\\
u(0,n)=f(n),&n\in\ZZ,
\end{array} \right.
\end{equation*}
and it is given by the convolution $u(t,n)=e^{t\Delta_d}f(n):=\sum_{j\in\ZZ}G(t,n-j)f(j)=\sum_{j\in\ZZ}G(t,j)f(n-j),$ where $$G(t,n)=e^{-2t}I_{n}(2t),\quad n\in\ZZ, \: t>0,$$ being $I_n$ the modified Bessel function of the first kind and order $n\in\ZZ,$  see  Section \ref{Gaussiankernel} for more details.  
Moreover, by subordination (see \cite[Chapter IX, Section 11]{Yo}) we can define the Poisson semigroup associated with  $\Delta_d$ as \begin{align*}
    e^{-y\sqrt{-\Delta_d}}f(n)&:=\frac{y}{2\sqrt{\pi}}\int_0^\infty \frac{e^{-\frac{y^2}{4t}}}{t^{3/2}} e^{t\Delta_d}f(n)dt,\quad y>0, \: \; n\in\ZZ,
\end{align*}
which is the solution to the discrete Poisson problem 
\begin{equation*}\label{eqPoisson}
\left\{\begin{array}{ll}
\partial_y^2 v(y,n)-\Delta_d v(y,n)=0,&n\in\ZZ,\,y\geq 0,\\
v(0,n)=f(n),&n\in\ZZ.
\end{array} \right.
\end{equation*}

For $\alpha>0$ and $1 \leq p,q \leq \infty,$ we define the discrete heat Besov spaces and discrete Poisson Besov spaces  as
\begin{multline*}
\Lambda_H^{\alpha,p,q}:= \left\{   
f:\ZZ \to \RR \, : \, 
\frac{f}{1+\abs{\cdot}^\alpha} \in \ell^q(\ZZ,\mu) 
\text{ and } 
\norm{t^{k-\frac{\alpha}{2}} \partial_t^k e^{t\Delta_d}f}_{p,q}  < \infty, \text{ with } 
\right. 
\\
\left.
k=\left[ \frac{\alpha}{2} \right]+1, \:t>0
\right\}
\end{multline*} and
\begin{multline*}
\Lambda_P^{\alpha,p,q}:= \left\{   
f:\ZZ \to \RR \, : \, 
\sum_{n \in \ZZ} \frac{\abs{f(n)}}{1+\abs{n}^2} < \infty 
\text{ and } 
\norm{y^{l-\alpha} \partial_y^l e^{-y \sqrt{-\Delta_d}}f}_{p,q} < \infty, \text{ with } 
\right. 
\\
\left.
l=[\alpha]+1,\: y>0
\right\}.
\end{multline*}
Our first main result is the following.

\begin{theorem}
	\label{theorem:equal_spaces_general_case}
	Let $1 \leq p, q \leq \infty$.
	\begin{enumerate}[(A)]
		\item Let $\alpha >0$.
		\begin{enumerate}[({A}1)]
			\item If $\alpha \notin \NN$, then $C^{\alpha,p,q}(\ZZ) = \Lambda_H^{\alpha,p,q}$.
			\item If $\alpha \in \NN$, then $Z^{\alpha,p,q}(\ZZ) = \Lambda_H^{\alpha,p,q}$.
		\end{enumerate}
		\item Let $f: \ZZ \to \RR$ such that $\sum_{j \in \ZZ} \frac{\abs{f(j)}}{1+\abs{j}^2} < \infty$.
		\begin{enumerate}[({B}1)]
			\item For every $\alpha>0$, $\alpha \notin \NN$,
			\[ f \in C^{\alpha,p,q}(\ZZ) \iff f \in \Lambda_H^{\alpha,p,q} \iff f \in \Lambda_P^{\alpha,p,q}. \]
			\item For every $\alpha \in \NN$,
			\[ f \in Z^{\alpha,p,q}(\ZZ) \iff f \in \Lambda_H^{\alpha,p,q} \iff f \in \Lambda_P^{\alpha,p,q}. \] 
		\end{enumerate}
	\end{enumerate}
\end{theorem}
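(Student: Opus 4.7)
The plan is to establish the chain of equivalences in stages. First I would handle part (A) by proving the two inclusions $C^{\alpha,p,q}(\ZZ) \subset \Lambda_H^{\alpha,p,q}$ and $\Lambda_H^{\alpha,p,q} \subset C^{\alpha,p,q}(\ZZ)$ (and the analogous pair for the Zygmund case in (A2)). For the forward direction, observe that $\partial_t^k e^{t\Delta_d} f = \Delta_d^k e^{t\Delta_d} f$, which can be written as a convolution against $\partial_t^k G(t,\cdot)$. Since $\Delta_d = -\delta_{right}\delta_{left}$, one can transfer differences from the kernel onto $f$ by a discrete summation-by-parts/reindexing, writing $\partial_t^k e^{t\Delta_d} f(n)$ as a weighted sum over $j$ of second differences $\delta_{right}^l f(\cdot+j)-\delta_{right}^l f(\cdot)$ (or Zygmund-type second differences in case (A2)) divided by $|j|^{\alpha-l}$. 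Applying Minkowski's inequality in $\ell^p$ then reduces the mixed-norm $\|t^{k-\alpha/2}\partial_t^k e^{t\Delta_d} f\|_{p,q}$ to the Besov quasi-norm of $f$, provided one has sufficiently sharp pointwise estimates on $\partial_t^k G(t,n)$ and its spatial differences — these are the ``new estimates for the derivatives of the discrete heat kernel'' advertised in the abstract.

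For the reverse inclusion $\Lambda_H^{\alpha,p,q} \subset C^{\alpha,p,q}(\ZZ)$, I would use a fundamental-theorem-of-calculus identity. Writing $\tau_j$ for the shift by $j$ and using the semigroup property together with $\partial_t^k e^{t\Delta_d} f \to 0$ as $t\to\infty$ (which is where the size/growth condition on $f$ from Lemma \ref{lemma:C_space_bound_growth} enters), one recovers
\[
(\tau_j - \mathrm{Id})\,\delta_{right}^l f(n) \;=\; c_k\int_0^\infty t^{k-1}\,\partial_t^k\!\left[(\tau_j - \mathrm{Id})\,\delta_{right}^l e^{t\Delta_d} f(n)\right] dt,
\]
up to a polynomial remainder that is killed by $\delta_{right}^l$. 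Splitting the integral at $t \sim |j|^2$, bounding the small-$t$ piece by moving $\delta_{right}^l$ to the kernel and using the kernel bounds, and controlling the large-$t$ piece via the semigroup, one then applies Hardy's inequality on $(0,\infty)$ with the measure $dt/t$ to absorb the gain $t^{k-\alpha/2}$ and conclude the required $\ell^q(dj/|j|)$ summability. The Zygmund case (A2) is handled identically, replacing the first difference in $j$ by the symmetric second difference $\tau_j - 2\mathrm{Id} + \tau_{-j}$.

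Second, for part (B), I would establish $\Lambda_H^{\alpha,p,q} = \Lambda_P^{\alpha,p,q}$ under the size assumption $\sum_j |f(j)|/(1+|j|^2) < \infty$, which ensures that $e^{-y\sqrt{-\Delta_d}}f$ makes pointwise sense via the subordination formula. The inclusion $\Lambda_H \subset \Lambda_P$ is direct: differentiating the subordination formula $l$ times in $y$ produces an expression of the form
\[
\partial_y^l e^{-y\sqrt{-\Delta_d}} f(n) \;=\; \int_0^\infty K_l(y,t)\,\partial_t^{k} e^{t\Delta_d} f(n)\, dt,
\]
for a suitable kernel $K_l$ (obtained after repeated integration by parts in $t$), and Minkowski's inequality combined with explicit computation of $\int_0^\infty |K_l(y,t)|\, t^{\alpha/2-k}\, dt \lesssim y^{\alpha-l}$ gives the bound. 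For the converse $\Lambda_P \subset \Lambda_H$, I would invert the relation via the identity $e^{t\Delta_d} f = \frac{1}{\sqrt{4\pi t}}\int_{\RR} e^{-s^2/(4t)} e^{-|s|\sqrt{-\Delta_d}} f\, ds$ (or, more flexibly, by iterating the semigroup property $e^{-(y_1+y_2)\sqrt{-\Delta_d}} = e^{-y_1\sqrt{-\Delta_d}}e^{-y_2\sqrt{-\Delta_d}}$ to upgrade Poisson derivatives to heat derivatives), then differentiating $k$ times in $t$ and estimating exactly as above.

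The main obstacle will be the forward direction in (A): unlike the Euclidean Gaussian kernel, $G(t,n) = e^{-2t} I_n(2t)$ has only sub-Gaussian decay, lacks pointwise Hölder continuity, and its derivatives $\partial_t^k G$ and $\delta_{right}^m G$ interact in a delicate way governed by the asymptotic behavior of modified Bessel functions in different regimes of $n^2/t$. Producing sharp bounds of the form $|\partial_t^k \delta_{right}^m G(t,n)|$ with the correct balance between the $t$-decay and the $n$-decay — strong enough to make the $L^q(dt/t;\ell^p)$ and $\ell^q(dj/|j|;\ell^p)$ norms interchangeable — is the key technical step, and all the mixed-norm estimates in the proof will rest on these kernel bounds.
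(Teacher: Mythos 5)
There is one genuine gap. Your route for $\Lambda_P^{\alpha,p,q}\subset\Lambda_H^{\alpha,p,q}$ rests on the identity $e^{t\Delta_d}f=\frac{1}{\sqrt{4\pi t}}\int_{\RR}e^{-s^2/(4t)}e^{-|s|\sqrt{-\Delta_d}}f\,ds$, and this identity is false: applying both sides to a spectral value $\lambda\geq 0$ of $\sqrt{-\Delta_d}$ and completing the square gives $\frac{2}{\sqrt{4\pi t}}\int_0^\infty e^{-s^2/(4t)}e^{-s\lambda}\,ds=e^{t\lambda^2}\operatorname{erfc}(\lambda\sqrt{t})$, not $e^{-t\lambda^2}$. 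Bochner subordination only runs in one direction (Poisson from heat, via the one-sided stable-$1/2$ density); there is no positive subordinating kernel producing heat from Poisson, and ``iterating the semigroup property'' does not substitute for one. The paper avoids this entirely: it never passes from $\Lambda_P$ to $\Lambda_H$ directly, but closes the circle as $C^{\alpha,p,q}\subset\Lambda_H^{\alpha,p,q}\subset\Lambda_P^{\alpha,p,q}\subset C^{\alpha,p,q}$ (Propositions \ref{prop:C_implies_Lambda_H_case_0_1}, \ref{prop:Lambda_H_implies_Lambda_P}, \ref{prop:Lambda_P_implies_C_case_0_1}), so that the Poisson-to-heat implication is obtained by going through the difference condition. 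You would need to restructure your part (B) in the same way (or find a genuinely different inversion).

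Beyond that, your architecture differs from the paper's in a way worth noting. You propose to prove both inclusions of part (A) directly for arbitrary $\alpha>0$, transferring $l$ discrete derivatives onto $f$ and using a $k$-fold reproducing formula; the paper instead proves only the base cases $0<\alpha<1$ (\Cref{theorem:equal_spaces_0_1}) and $0<\alpha<2$ (\Cref{theorem:equal_spaces_0_2}) and then reduces general $\alpha$ by the ``step-down'' equivalences $f\in\Lambda_H^{\alpha,p,q}\iff\delta_{right}f\in\Lambda_H^{\alpha-1,p,q}$ and $f\in\Lambda_P^{\alpha,p,q}\Rightarrow\delta_{right}f\in\Lambda_P^{\alpha-1,p,q}$ (Theorems \ref{theorem:step_down_up_lambda_H_space} and \ref{theorem:step_down_lambda_P_space}), iterated $[\alpha]$ times. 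Your all-at-once scheme is the Taibleson-style continuous argument and could in principle work, but it silently requires, for each $k$ and $l$, the moment cancellations $\sum_j j^m\partial_t^kG(t,j)=0$ for $m<2k$ (the discrete analogue of subtracting a Taylor polynomial, which in this paper comes from \eqref{pol}) together with the full family of estimates on $\partial_t^k\delta_{right}^mG$; you correctly identify these as the crux but leave them entirely unproved, whereas the reduction strategy only needs them for one or two derivatives. Finally, the downward direction in your part (A) is carried out in the paper with the Poisson semigroup rather than the heat semigroup precisely because the linear (rather than quadratic) scaling in $y$ makes the Hardy-inequality step match the $\ell^q(dj/|j|)$ normalization cleanly; doing it with $e^{t\Delta_d}$ as you suggest is possible but requires the change of variables $t\sim|j|^2$ to be tracked through both Hardy inequalities of \Cref{Hardy_inequality}.
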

Observe that when $p=q=\infty$, we recover the results obtained in \cite{AbadiasDeLeonContreras2022}.
In order to prove \Cref{theorem:equal_spaces_general_case}, we will need to prove some refined estimates about the derivatives of the discrete heat kernel (see Lemma \ref{lemma:inequality_heat_kernel}) {and of the discrete heat semigroup (see Lemmata \ref{lemma:decay_heat_semigroup} and \ref{lemma:inequality_norms_n_square_semigroup}), and  some mixed-norm estimates for the derivatives of the heat and Poisson semigroups (see Lemmata \ref{lemma:inequality_add_derivative}, \ref{lemma:inequality_remove_derivative}  and  \ref{lemma:inequality_interchange_derivative})}.  We believe that these results are also of independent interest.

Moreover, these characterizations of the discrete Besov spaces through the semigroup language will allow us to get the regularity results for fractional operators associated with the discrete Laplacian, such as the Bessel potentials and the powers $(-\Delta_d)^{\pm \beta}$, in a direct and systematic way. For the appropriate definition of these operators, see Section \ref{Applications}.
\begin{theorem}
	\label{theorem:BesselPotentials}
	Let $\alpha, \beta>0$, $1 \leq p,q \leq \infty$ and $f: \ZZ \to\RR$ such that $f \in \Lambda_H^{\alpha,p,q}$,
	then $(I - \Delta_d)^{-\beta / 2} f \in \Lambda_H^{\alpha+\beta,p,q}$.
\end{theorem}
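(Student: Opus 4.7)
The plan is to start from the subordination representation of the Bessel potential,
\[
(I-\Delta_d)^{-\beta/2}f = \frac{1}{\Gamma(\beta/2)}\int_0^\infty t^{\beta/2-1}e^{-t}\,e^{t\Delta_d}f\,dt,
\]
which is well-defined pointwise for $f$ of polynomial growth because the heat kernel $G(t,n)$ has polynomially bounded moments in $n$ while $e^{-t}$ provides rapid decay in $t$. First I would verify the growth/summability condition $g/(1+\abs{\cdot}^{\alpha+\beta})\in\ell^q(\ZZ,\mu)$ for $g:=(I-\Delta_d)^{-\beta/2}f$; this is weaker than the condition for $f$ (since $\alpha\le \alpha+\beta$) and follows from a weighted discrete Young-type estimate applied to the convolution kernel $K_\beta$ of $(I-\Delta_d)^{-\beta/2}$, which is rapidly decreasing on $\ZZ$ because its discrete Fourier transform $(1+4\sin^2(\xi/2))^{-\beta/2}$ is smooth on the torus.

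For the seminorm part, choose $k:=[(\alpha+\beta)/2]+1$, which satisfies $k>(\alpha+\beta)/2>\alpha/2$ since $\beta>0$. The key preliminary is that for any integer $K>\alpha/2$, the seminorm $\norm{u^{K-\alpha/2}\partial_u^K e^{u\Delta_d}f}_{p,q}$ is comparable to the $\Lambda_H^{\alpha,p,q}$ seminorm of $f$. This equivalence follows from \Cref{theorem:equal_spaces_general_case} (both sides characterize $C^{\alpha,p,q}(\ZZ)$ or $Z^{\alpha,p,q}(\ZZ)$, which do not depend on $K$) together with the heat-semigroup estimates supplied by the paper's auxiliary lemmata. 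Hence one may assume $h(u):=u^{k-\alpha/2}\norm{\partial_u^k e^{u\Delta_d}f}_p \in L^q((0,\infty),du/u)$.

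Commuting the derivative with the subordination integral and using the semigroup property yields
\[
\partial_s^k e^{s\Delta_d}(I-\Delta_d)^{-\beta/2}f = \frac{1}{\Gamma(\beta/2)}\int_0^\infty t^{\beta/2-1}e^{-t}\,\partial_u^k e^{u\Delta_d}f\,\big|_{u=s+t}\,dt.
\]
Applying Minkowski in $\ell^p$, writing $\partial_u^k e^{u\Delta_d}f = u^{\alpha/2-k}\cdot u^{k-\alpha/2}\partial_u^k e^{u\Delta_d}f$, and performing the change of variables $u=s+t$ followed by $u=sv$ with $v\in(1,\infty)$, all powers of $s$ outside the integral cancel exactly, leaving
\[
s^{k-(\alpha+\beta)/2}\norm{\partial_s^k e^{s\Delta_d}(I-\Delta_d)^{-\beta/2}f}_p \le C\int_1^\infty (v-1)^{\beta/2-1}v^{\alpha/2-k}\,e^{-s(v-1)}\,h(sv)\,dv.
\]
Moving the $L^q(ds/s)$ norm under the $v$-integral via Minkowski's inequality, and using that the dilation $w=sv$ preserves $dw/w$ while $e^{-s(v-1)}\le 1$, one gets $\norm{e^{-s(v-1)}h(sv)}_{L^q(ds/s)}\le\norm{h}_{L^q(dw/w)}$. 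The problem then reduces to the finiteness of $\int_1^\infty (v-1)^{\beta/2-1}v^{\alpha/2-k}\,dv$, which holds near $v=1$ because $\beta>0$ and at infinity because $k-(\alpha+\beta)/2>0$.

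The main obstacle is the derivative-order equivalence used in the second paragraph: the clean algebraic computation requires using the same integer $k$ on both sides of the implication, not the $k_0=[\alpha/2]+1$ appearing in the definition of $\Lambda_H^{\alpha,p,q}$. Once that equivalence is available (as a by-product of \Cref{theorem:equal_spaces_general_case} and the auxiliary heat-kernel/semigroup lemmata), the remainder is a short Minkowski-plus-beta-function argument. The cases $q<\infty$ and $q=\infty$ can be handled uniformly with the standard convention for $L^\infty(ds/s)$; the exponential factor $e^{-t}$ from the Bessel potential provides additional control but is not essential for the $v$-integral to converge, since $k>(\alpha+\beta)/2$ alone already suffices.
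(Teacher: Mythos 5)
Your proposal is correct and follows essentially the same route as the paper's proof: subordination, Minkowski's inequality to reduce $\norm{\partial_t^k e^{t\Delta_d}(I-\Delta_d)^{-\beta/2}f}_p$ to a convolution-type integral of $\norm{\partial_u^k e^{u\Delta_d}f}_p$ over $(t,\infty)$, and a Hardy-type estimate, with the derivative-order independence supplied by \Cref{remark:amount_of_derivatives_on_t} (which is the precise source in the paper, rather than \Cref{theorem:equal_spaces_general_case} itself). The only cosmetic deviations are that the paper cites \Cref{Hardy_convolution_inequality} where you reprove it inline via the dilation $u=sv$, and it checks the size condition with Minkowski plus \Cref{lemma:decay_heat_semigroup}(1) instead of your (also valid) appeal to the rapid decay of the Bessel kernel.
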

In order to define the `fractional' powers of order $\beta$ of the discrete Laplacian, we need to consider the functions in the following spaces:
\[
\ell_{\pm \beta}:=\left\{u: \ZZ \to \RR: \sum_{m \in \ZZ} \frac{|u(m)|}{(1+|m|)^{1 \pm 2 \beta}}<\infty\right\}.
\]
 The choice of these spaces is justified since the fractional powers satisfy the following pointwise formula
\begin{equation*}
\left(-\Delta_d\right)^{\pm \beta} f(n)=\sum_{m \in \ZZ} K_{\pm \beta}(n-m) f(m), \quad n \in \ZZ,
\end{equation*}
where $K_\beta(m) \sim \frac{1}{|m|^{1+2 \beta}}$ whenever $\beta>0$ (with $K_\beta$ being of compact support if $\beta\in\NN$) and $K_{-\beta}(m) \sim \frac{1}{|m|^{1-2 \beta}}$,
for $0<$ $\beta<1 / 2$, see \cite{AbadiasDeLeonContreras2022, CiaurriRoncalStingaTorreaVarona2018}. 

Now we state our regularity results in the discrete Besov spaces for the discrete fractional Laplacian. The negative powers of the Laplacian are only
well-defined for $0<\beta<1 / 2$, since the integral that defines it is not absolutely convergent for $\beta \geq 1 / 2$. See Section \ref{Applications} for the details.

\begin{theorem}[A priori estimates]
	\label{theorem:SchauderEstimates}
	Let $\alpha>0$, $0<\beta<1 / 2$, $1 \leq p,q \leq \infty$ and $f: \ZZ \to \RR$ such that $f \in \Lambda_H^{\alpha,p,q}\cap\ell_{-\beta}$, 
	then $\left(-\Delta_d\right)^{-\beta} f \in \Lambda_H^{\alpha+2 \beta,p,q}$.
\end{theorem}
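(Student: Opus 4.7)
The plan is to transfer the smoothness of $f$ through the semigroup characterization $\Lambda_H^{\alpha,p,q}=C^{\alpha,p,q}(\ZZ)$ (resp.\ $Z^{\alpha,p,q}(\ZZ)$) provided by Theorem \ref{theorem:equal_spaces_general_case}. The key tool is the Balakrishnan-type subordination identity
\[
(-\Delta_d)^{-\beta}f(n)=\frac{1}{\Gamma(\beta)}\int_0^\infty s^{\beta-1}e^{s\Delta_d}f(n)\,ds,
\]
which converges absolutely for $f\in\ell_{-\beta}$ precisely because $0<\beta<1/2$ (this is the reason for the range restriction). Differentiating under the integral, which is justified by Lemma \ref{lemma:inequality_heat_kernel}, and performing the change of variable $u=t+s$ give, for every integer $k\geq 1$,
\[
\partial_t^k e^{t\Delta_d}(-\Delta_d)^{-\beta}f(n)=\frac{1}{\Gamma(\beta)}\int_t^\infty (u-t)^{\beta-1}\partial_u^k e^{u\Delta_d}f(n)\,du.
\]

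I then fix $k=[(\alpha+2\beta)/2]+1$, the canonical exponent for $\Lambda_H^{\alpha+2\beta,p,q}$. Taking $\ell^p(\ZZ)$-norms via Minkowski and substituting $u=tw$ to expose a scale-invariant kernel yields
\[
t^{k-(\alpha+2\beta)/2}\bigl\|\partial_t^k e^{t\Delta_d}(-\Delta_d)^{-\beta}f\bigr\|_p \leq \frac{1}{\Gamma(\beta)}\int_1^\infty (w-1)^{\beta-1}w^{\alpha/2-k}\,g(tw)\,dw,
\]
where $g(u):=u^{k-\alpha/2}\|\partial_u^k e^{u\Delta_d}f\|_p$. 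Applying Minkowski's inequality in $L^q((0,\infty),dt/t)$ and exploiting the multiplicative invariance $\|g(\cdot\, w)\|_{L^q(dt/t)}=\|g\|_{L^q(dt/t)}$, together with the convergence of $\int_1^\infty (w-1)^{\beta-1}w^{\alpha/2-k}\,dw$ (near $w=1$ because $\beta>0$ and at infinity because $k>\alpha/2+\beta$), reduces matters to
\[
\bigl\|t^{k-(\alpha+2\beta)/2}\partial_t^k e^{t\Delta_d}(-\Delta_d)^{-\beta}f\bigr\|_{p,q}\lesssim \bigl\|t^{k-\alpha/2}\partial_t^k e^{t\Delta_d}f\bigr\|_{p,q},
\]
whose right-hand side is controlled by the $\Lambda_H^{\alpha,p,q}$-seminorm of $f$ after passing from the canonical index $[\alpha/2]+1$ to the larger index $k$ by means of Lemmata \ref{lemma:inequality_add_derivative} and \ref{lemma:inequality_remove_derivative}.

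It remains to verify the size condition $(-\Delta_d)^{-\beta}f/(1+|\cdot|^{\alpha+2\beta})\in\ell^q(\ZZ,\mu)$. Using the pointwise formula $(-\Delta_d)^{-\beta}f(n)=\sum_m K_{-\beta}(n-m)f(m)$ with $|K_{-\beta}(m)|\lesssim (1+|m|)^{2\beta-1}$, I would split the sum into the near region $|m|\leq 2|n|$ and the far region $|m|>2|n|$. On the far region one has $|n-m|\gtrsim |m|$, so the contribution is uniformly bounded by $\|f\|_{\ell_{-\beta}}$; on the near region the factor $(1+|n|)^{\alpha+2\beta}$ arises after combining the growth information for $f$ supplied by Lemma \ref{lemma:C_space_bound_growth} with $\sum_{|m-n|\leq 3|n|}(1+|n-m|)^{2\beta-1}\lesssim (1+|n|)^{2\beta}$.

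The main obstacle is this last step when $q<\infty$: membership of $f/(1+|\cdot|^\alpha)$ in the weighted space $\ell^q(\ZZ,\mu)$ does \emph{not} supply a pointwise growth bound for $f$, so one must transport the $\ell^q(\mu)$-structure through the convolution with $K_{-\beta}$, which calls for a dyadic decomposition combined with a weighted Young-type inequality adapted to the measure $\mu=\sum_n (1+|n|)^{-1}\delta_n$. The remaining points --- commuting $\partial_t^k$ with the Balakrishnan integral and interchanging the derivative index --- are routine given the discrete heat-kernel and semigroup estimates of Lemmata \ref{lemma:inequality_heat_kernel}--\ref{lemma:inequality_norms_n_square_semigroup}.
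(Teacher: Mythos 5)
Your treatment of the mixed-norm seminorm is correct and is in substance the paper's own argument: the subordination identity $\partial_t^k e^{t\Delta_d}(-\Delta_d)^{-\beta}f=\frac{1}{\Gamma(\beta)}\int_t^\infty(u-t)^{\beta-1}\partial_u^k e^{u\Delta_d}f\,du$, Minkowski's inequality in $\ell^p(\ZZ)$, and then a dilation argument in $L^q((0,\infty),dt/t)$. The paper packages that last step as the Hardy-type convolution inequality of \Cref{Hardy_convolution_inequality} and then invokes \Cref{remark:amount_of_derivatives_on_t}, exactly as you do with Lemmata \ref{lemma:inequality_add_derivative} and \ref{lemma:inequality_remove_derivative}; your change of variables $u=tw$ together with the invariance of $dt/t$ is just a proof of that lemma, so nothing is lost or gained there, and your convergence conditions ($\beta>0$ near $w=1$, $k>\alpha/2+\beta$ at infinity) are the right ones.

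The genuine gap is the one you flag yourself: the size condition $\frac{(-\Delta_d)^{-\beta}f}{1+|\cdot|^{\alpha+2\beta}}\in\ell^q(\ZZ,\mu)$ for $1\le q<\infty$. Your near-region argument needs a pointwise bound $|f(m)|\lesssim 1+|m|^{\alpha}$, which is available only when $q=\infty$ (where the paper simply quotes \cite{AbadiasDeLeonContreras2022}), and you leave the transport of the $\ell^q(\ZZ,\mu)$ information through the convolution with $K_{-\beta}$ as an unexecuted ``dyadic decomposition plus weighted Young inequality''. The paper closes this differently and concretely: after reducing by symmetry to $n\ge 1$ and to sequences $a_j=|f(\pm j)|$, it splits the inner sum over $j$ into the three ranges $1\le j\le n-1$, $n\le j\le 2n$ and $j\ge 2n+1$. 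On the first range it dominates $n^{-\alpha-2\beta-1/q}$ by $n^{-2\beta}j^{-\alpha-1/q}$ and applies the $\ell^q(\NN_0)$-boundedness of the discrete fractional Ces\`aro operator from \cite[Theorem 7.2]{AbadiasMiana}; on the second range it uses the adjoint Ces\`aro operator in the same way; the third range is controlled by $\|f\|_{\ell_{-\beta}}$ essentially as in your far-region estimate; and the endpoint $q=1$ is handled separately by Tonelli's theorem. So the missing ingredient in your proposal is precisely a boundedness theorem for the Ces\`aro operator and its adjoint (or an equivalent substitute), and without it your proof of the size condition is incomplete for every finite $q$.
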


\begin{theorem}
	\label{theorem:HolderEstimates}
	Let $\alpha, \beta>0$, such that $0< 2 \beta<\alpha$, $1 \leq p,q \leq \infty$ and $f: \ZZ \to \RR$.
	\begin{enumerate}
		\item If $f \in \Lambda_H^{\alpha,p,q} \cap \ell_\beta$, then $\left(-\Delta_d\right)^\beta f \in \Lambda_H^{\alpha-2 \beta,p,q}$.
		\item If $\beta \in \NN$ and $f \in \Lambda_H^{\alpha,p,q}$, then 
		$\underbrace{(-\Delta_d) \circ (-\Delta_d) \circ \cdots \circ (-\Delta_d)}_{\beta \text{ times }}f \in \Lambda_H^{\alpha-2 \beta,p,q}$.
	\end{enumerate}
\end{theorem}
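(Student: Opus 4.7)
The plan is to reduce both parts to the equivalence between the $\Lambda_H^{\alpha,p,q}$ characterization with the canonical derivative order $k_0=[\alpha/2]+1$ and with higher orders, and then to exploit the fact that $(-\Delta_d)^\beta$ commutes with the heat semigroup $e^{t\Delta_d}$. Set $k:=[(\alpha-2\beta)/2]+1$. To show $(-\Delta_d)^\beta f\in\Lambda_H^{\alpha-2\beta,p,q}$ I must bound the mixed norm $\norm{t^{k-(\alpha-2\beta)/2}\partial_t^k e^{t\Delta_d}(-\Delta_d)^\beta f}_{p,q}$ and verify the growth condition on $(-\Delta_d)^\beta f$.

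For part (2), the heat equation $\partial_t e^{t\Delta_d}=\Delta_d e^{t\Delta_d}$ iterated $\beta$ times gives $\partial_t^k e^{t\Delta_d}[(-\Delta_d)^\beta f]=(-1)^\beta\partial_t^{k+\beta}e^{t\Delta_d}f$. Since $\beta\in\NN$, one has $k+\beta=[\alpha/2]+1=k_0$ and $k-(\alpha-2\beta)/2=k_0-\alpha/2$, so the required mixed norm equals $\norm{t^{k_0-\alpha/2}\partial_t^{k_0}e^{t\Delta_d}f}_{p,q}$, which is finite by hypothesis. The growth condition on $(-\Delta_d)^\beta f$ follows from writing it as a finite linear combination of translates of $f$ and applying the growth control provided by Lemma~\ref{lemma:C_space_bound_growth} together with the identity $\Lambda_H^{\alpha,p,q}=C^{\alpha,p,q}(\ZZ)$ or $Z^{\alpha,p,q}(\ZZ)$ from Theorem~\ref{theorem:equal_spaces_general_case}.

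For part (1), $\beta\notin\NN$, I would use the Balakrishnan-type subordination representation: with $m:=[\beta]+1$,
\[
(-\Delta_d)^\beta f=\frac{(-1)^m}{c_{\beta,m}}\int_0^\infty s^{m-\beta-1}\,\partial_s^m e^{s\Delta_d}f\,ds,
\]
valid under $f\in\ell_\beta$ (compatibility with the kernel representation $(-\Delta_d)^\beta f=\sum_m K_\beta(n-m)f(m)$ is standard). Commuting $\partial_t^k e^{t\Delta_d}$ with the integral and using the semigroup property yields
\[
\partial_t^k e^{t\Delta_d}[(-\Delta_d)^\beta f]=\frac{(-1)^m}{c_{\beta,m}}\int_0^\infty s^{m-\beta-1}\,\partial_u^{k+m}e^{u\Delta_d}f\big|_{u=t+s}\,ds.
\]
Set $\varphi(u)=u^{k+m-\alpha/2}\partial_u^{k+m}e^{u\Delta_d}f$; Lemmata~\ref{lemma:inequality_add_derivative} and \ref{lemma:inequality_remove_derivative} (changing derivative order in the characterization) give $\norm{\varphi}_{p,q}\lesssim\norm{f}_{\Lambda_H^{\alpha,p,q}}$. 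Apply Minkowski's integral inequality in $\ell^p$ then in $L^q(dt/t)$, and perform the substitutions $s=tr$ and $u=t(1+r)$; the $t$-exponents cancel (using $k-(\alpha-2\beta)/2+(m-\beta)-(k+m-\alpha/2)=0$) and the dilation invariance of $L^q(dt/t)$ factors out $\norm{\varphi}_{p,q}$, leaving
\[
\norm{(-\Delta_d)^\beta f}_{\Lambda_H^{\alpha-2\beta,p,q}}\le c\,\norm{\varphi}_{p,q}\int_0^\infty r^{m-\beta-1}(1+r)^{-(k+m-\alpha/2)}\,dr.
\]
The remaining Beta-type integral is finite because $m-\beta>0$ (integrability at $0$) and $k-(\alpha-2\beta)/2>0$ (integrability at $\infty$). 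The growth condition on $(-\Delta_d)^\beta f$ follows from the pointwise kernel formula, the decay $K_\beta(m)\sim|m|^{-1-2\beta}$, and the hypothesis $f\in\ell_\beta$.

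The main obstacle I expect is justifying the interchange of $\partial_t^k$ with the subordination integral, which requires uniform control of $\norm{\partial_u^{k+m}e^{u\Delta_d}f}_p$ for $u\ge t>0$; Lemmata~\ref{lemma:decay_heat_semigroup} and \ref{lemma:inequality_norms_n_square_semigroup} should supply the needed bounds. The endpoint case $q=\infty$ requires replacing Minkowski by a direct supremum estimate, but the substitution $u=t(1+r)$ preserves the $L^\infty(dt/t)$ norm, so the argument transfers verbatim; the case $p=\infty$ is analogous by treating the $\ell^\infty$-norm as a supremum outside the integral.
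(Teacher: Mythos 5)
Your proposal is correct in substance and reaches the same key estimate as the paper, but by a somewhat different route, and two steps are thinner than they can afford to be. For part (2) the paper does not use your (clean) identity $\partial_t^k e^{t\Delta_d}(-\Delta_d)^\beta f=(-1)^\beta\partial_t^{k+\beta}e^{t\Delta_d}f$; instead it writes $(-\Delta_d)f(n)=-\delta_{right}^2f(n-1)$ and applies \Cref{theorem:step_down_up_lambda_H_space} repeatedly, which has the advantage of delivering the size condition on $(-\Delta_d)^\beta f$ for free, whereas your route must argue it separately (your appeal to \Cref{lemma:C_space_bound_growth} works since $2\beta\le[\alpha]$, but it is an extra step). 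For part (1) the paper starts from its stated definition $(-\Delta_d)^\beta f=\frac{1}{c_\beta}\int_0^\infty(e^{\tau\Delta_d}-I)^{[\beta]+1}f\,\frac{d\tau}{\tau^{1+\beta}}$, expands $(e^{\tau\Delta_d}-I)^l$ as an iterated integral over $[0,\tau]^l$, enlarges to a simplex, and after Tonelli lands on exactly the inequality $\norm{\partial_t^k e^{t\Delta_d}(-\Delta_d)^\beta f}_p\le C\int_t^\infty\norm{\partial_\nu^{k+l}e^{\nu\Delta_d}f}_p(\nu-t)^{l-\beta-1}d\nu$ that your Balakrishnan formula produces in one line; it then closes with the Hardy convolution inequality (\Cref{Hardy_convolution_inequality}), which is equivalent to your dilation $s=tr$ plus Minkowski argument (and your exponent bookkeeping, including the finiteness conditions $m-\beta>0$ and $k>(\alpha-2\beta)/2$, is right). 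So your argument buys brevity, at the cost of invoking a Balakrishnan representation that is not the paper's definition: for functions of polynomial growth in $\ell_\beta$ this identity is precisely what the paper's simplex computation justifies (and only as an inequality up to constants), so you should either prove the representation in this setting or run the computation from the $(e^{\tau\Delta_d}-I)^{[\beta]+1}$ definition as the paper does.

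The one genuine gap is the size condition in part (1). Knowing $f\in\ell_\beta$ and $K_\beta(m)\sim|m|^{-1-2\beta}$ only gives that $(-\Delta_d)^\beta f$ is well defined pointwise; it does not give $\frac{(-\Delta_d)^\beta f}{1+|\cdot|^{\alpha-2\beta}}\in\ell^q(\ZZ,\mu)$. That membership requires combining the hypothesis $\frac{f}{1+|\cdot|^\alpha}\in\ell^q(\ZZ,\mu)$ with a decomposition of the convolution sum into the ranges $j\lesssim n$, $n\le j\le 2n$, $j\ge 2n$ and the $\ell^q$-boundedness of the discrete Ces\`aro operators (this is the bulk of the proof of \Cref{theorem:SchauderEstimates}, which the paper reuses here with $2\beta$ in place of $-2\beta$). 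Your one-sentence dismissal of this point would not survive refereeing; the rest of the proposal would.
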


The paper is organized as follows. In Section \ref{Gaussiankernel}, we establish all the results concerning pointwise and norm estimates of the discrete heat and Poisson kernels and semigroups. Section \ref{Characterization} is devoted to prove \Cref{theorem:equal_spaces_general_case} and all the properties related to these spaces. In Section \ref{Applications} we prove the regularity results for the fractional powers of the operators. Finally, in Section \ref{Appendix} we include the Hardy inequalities in their discrete and continuous versions, that will play a crucial role in our proofs.

Throughout this article, $C$ and $c$  always denote positive constants that can change in each occurrence.

\section{Discrete heat and Poisson semigroups}\label{Gaussiankernel}
\subsection{Some known results}

In this subsection we collect some known properties about gamma and Bessel functions that we will use along the paper.

For every ${\gamma > 0},$ and $\eta>0,$ it holds that
\begin{equation}
    \label{eq:inequality_beta_type}
    (1-r)^\eta r^\gamma \leq\left(\frac{\gamma}{\gamma+\eta}\right)^\gamma, \quad  0<r<1.
\end{equation} 
This inequality was a key point in the proof of many results in \cite{AbadiasGonzalez-CamusMianaPozo2021} and
\cite{CiaurriGillespieRoncalTorreaVarona2017}. We will also use the following estimates for the Euler's gamma function (see \cite[Eq. (1)]{TricomiErdelyi1951})

\begin{equation}
    \label{eq:gamma_estimates}
    \frac{\Gamma(z+\alpha)}{\Gamma(z)}=z^\alpha\left(1+O\left(\frac{1}{|z|}\right)\right), \quad \Re z > 0, \Re \alpha>0.
\end{equation}


We denote by $I_n$ the Bessel function of imaginary argument (also called modified Bessel function of first kind) and order $n \in \ZZ$,
given by
\[
    I_n(t)=\sum_{m=0}^{\infty} \frac{1}{m ! \Gamma(m+n+1)}\left(\frac{t}{2}\right)^{2 m+n}, \quad n \in \NN_{0}:=\NN \cup \{0\},\, t \in \CC,
\]
and $I_n = I_{-n}$ for $n \in \NN$. It also has the following useful integral representation,
\[
    I_n(t) =    \frac{t^n}{\sqrt{\pi} 2^n \Gamma(n+1 / 2)} 
                \int_{-1}^1 e^{-t s}\left(1-s^2\right)^{n-1 / 2} \mathrm{~d} s, \quad n \in \NN_0,\, t \geq 0.  
\]
Likewise, for $l \in \NN_0 $ and $n\in\NN_0,$ the discrete derivatives $\delta_{right}^l I_n$ have the following representation, see \cite[Proof of Lemma 2.4]{AbadiasDeLeonContreras2022},
\begin{equation}
    \label{eq:finite_differences_bessel_function}
    \begin{aligned}
            \delta_{right}^l I_n(t) & = \frac{t^n}{\sqrt{\pi} 2^n \Gamma(n+\frac{1}{2})} 
                                        \int_{-1}^1
                                            e^{-ts}(1-s^2)^{n-\frac{1}{2}} 
                                            \left( 
                                                (s+1)^l \right. \\
                                    &       \left. 
                                                + \sum_{m=1}^{l-1} 
                                                    \frac{1}{t^m} 
                                                        \sum_{p=1}^{\min\{m,l-m\}}
                                                            d_{p,m,l} s^p (s+1)^{l-m-p}
                                            \right)
                                         \, ds,
        \end{aligned}
\end{equation}
where $d_{p,m,l} \in \RR$ are constants  only depending on $p$, $m$ and $l,$ and the sum in the second line should only be interpreted when $l>1.$

{On the other hand, the generating function of the Bessel function $I_n$ is given by
$$e^{t(x+x^{-1})/2}=\sum_{n\in\ZZ}x^n I_n(t),\quad x\neq 0,\, t\in\CC.$$
From above identity, it was proved in \cite[Theorem 3.3]{AbadiasGonzalez-CamusMianaPozo2021} that, for every $k\in\NN_0$, \begin{equation}\label{pol}
\sum_{n\in\ZZ}n^{2k}I_n(t)=e^t p_k(t),\quad \sum_{n\in\ZZ}n^{2k+1}I_n(t)=0,\quad t>0,
\end{equation}
where each $p_k(t)$ is a polynomial of degree $k$ with positive coefficients, $p_0(t)=1$,
and $p_k(0)=0$ for all $k\in\NN.$
}

\subsection{Discrete heat kernel}
The solution of the heat problem on $\ZZ$ is given by the function $e^{t\Delta_d}f(n)= \sum_{j \in \ZZ} G(t,n-j) f(j)$,
where the discrete heat kernel is given by
\[
    G(t,n) = e^{-2t}I_n(2t), \quad n \in \ZZ, t>0.
\]
In the following we state a new general estimate for the heat kernel $G$ and its discrete derivatives $\delta_{right}^l G$, refining the estimates obtained in \cite{AbadiasDeLeonContreras2022}.

\begin{lemma}
    \label{lemma:inequality_heat_kernel}
    Let $l\in \NN_0.$ Assume that one of the following statements holds 
    \begin{itemize}
        \item[(i)] $ -\left[\frac{l+1}{2}\right] -\frac{1}{2}\leq \beta$ and $t \geq 1.$ \\
        
        \item[(ii)] $ -\left[\frac{l+1}{2}\right] -\frac{1}{2}\leq \beta \leq -\frac{1}{2}$ and $t \in (0,1).$
    \end{itemize}
    Then, there exists a positive constant $C_{\beta,l}$ only depending on $\beta$ and $l$, such that
    \[
        \abs{\delta_{right}^l G(t,n)} \leq C_{\beta,l} \frac{t^\beta}{1+\abs{n}^{l+2\beta+1}},\quad n\in\ZZ.    
    \] 
\end{lemma}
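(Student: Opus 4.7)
The plan is to start from the integral representation \eqref{eq:finite_differences_bessel_function} for $\delta_{right}^l I_n$ evaluated at $2t$, multiplied by $e^{-2t}$, so as to get a closed form for $\delta_{right}^l G(t,n)$. It suffices to treat $n\geq 0$: for $n<0$ we exploit $I_n=I_{-n}$ together with the expansion $\delta_{right}^l I_n = \sum_{k=0}^l \binom{l}{k}(-1)^k I_{n+k}$ to reduce, up to an index shift, to the non-negative case. After the change of variables $r=(s+1)/2$ in \eqref{eq:finite_differences_bessel_function}, a direct computation gives
\[
\delta_{right}^l G(t,n)=\frac{(4t)^n}{\sqrt{\pi}\,\Gamma(n+\tfrac12)}\int_0^1 e^{-4tr}\,r^{n-1/2}(1-r)^{n-1/2}\,B_l(r,t)\,dr,
\]
where $B_l(r,t)=(2r)^l+\sum_{m=1}^{l-1}(2t)^{-m}\sum_{p}d_{p,m,l}(2r-1)^p(2r)^{l-m-p}$. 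This splits the analysis into a main piece coming from $(2r)^l$ and lower-order pieces weighted by $t^{-m}$.

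The next step is to bound each of these pieces separately. For each term we apply the pointwise inequality \eqref{eq:inequality_beta_type} to the factor $r^{n-1/2+a}(1-r)^{n-1/2+b}$ arising from combining the $r$-powers with those from $B_l$, extracting a numerical constant that, combined with the prefactor $(4t)^n/\Gamma(n+\tfrac12)$ estimated via the asymptotics \eqref{eq:gamma_estimates}, yields polynomial decay of the right order in $1+|n|$. What remains inside the integral is of the form $\int_0^1 e^{-4tr} r^{c}(1-r)^{c'}\,dr$, which one estimates either by a Beta function (when $t$ is small) or by dominating $r^c(1-r)^{c'}\leq r^c$ and writing the result in terms of the incomplete Gamma function $(4t)^{-c-1}\gamma(c+1,4t)$ (when $t$ is large), to produce a sharp power $t^{\beta}$. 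Carrying out the arithmetic of exponents carefully — $\gamma$ and $\eta$ in \eqref{eq:inequality_beta_type} have to be chosen so that the final integer/half-integer exponent of $1+|n|$ matches $l+2\beta+1$ — is the main bookkeeping task.

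Finally, the case distinction on $t$ corresponds to whether the exponential $e^{-4tr}$ is available to absorb positive powers of $r$. For $t\geq 1$ the exponential provides room to raise $\beta$ as high as wanted, so only a lower bound on $\beta$ (dictated by the need to control the case $n=0$, where $\delta_{right}^l G(t,0)$ behaves like $t^{-\lfloor (l+1)/2\rfloor-1/2}$) appears. For $0<t<1$ no exponential gain is possible, hence the upper bound $\beta\leq -1/2$ in (ii), which is exactly the threshold at which the Beta-type integral $\int_0^1 r^{c}(1-r)^{n-1/2}\,dr$ starts to diverge in its $n$-uniform bound. Small values of $|n|$ (where the denominator $1+|n|^{l+2\beta+1}$ is bounded) are handled directly by the asymptotics $I_\nu(x)\sim e^x/\sqrt{2\pi x}$ together with the cancellations produced by $\delta_{right}^l$. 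The principal obstacle is not any single estimate but the simultaneous tracking of the powers of $t$ and of $n$ across main and correction terms, so that the constants combine to give precisely $t^{\beta}/(1+|n|^{l+2\beta+1})$ and not a coarser bound of the type previously obtained in \cite{AbadiasDeLeonContreras2022}.
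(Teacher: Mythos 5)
Your plan follows essentially the same route as the paper's proof: the same integral representation \eqref{eq:finite_differences_bessel_function} (the paper substitutes $u=4tr$ rather than keeping $r\in(0,1)$, which is cosmetic), the same two ingredients \eqref{eq:inequality_beta_type} and \eqref{eq:gamma_estimates}, reduction of negative $n$ to positive $n$ via $I_n=I_{-n}$, and a separate uniform bound on the bounded set of $n$'s, which is exactly where the hypotheses (i) and (ii) on $\beta$ and $t$ enter. The one point to state precisely is the application of \eqref{eq:inequality_beta_type}: it must be applied to $(1-r)^{n-1/2}r^{\beta+1/2+l-p}$ with the \emph{fixed} exponent on $r$, leaving $e^{-4tr}r^{n-1-\beta-m}$ inside the integral (your incomplete Gamma), not to a product in which both exponents grow like $n$ (which would give $2^{-n}$ and combine incorrectly with the prefactor $(4t)^n/\Gamma(n+\frac12)$); moreover the borderline case $\beta+1/2+l-p=0$, excluded from \eqref{eq:inequality_beta_type}, must be treated separately, as the paper does.
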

\begin{proof}
Let $l\in\NN_0.$ First note that by \eqref{eq:finite_differences_bessel_function}, and performing the change of variables $1+s=\frac{u}{2t},$ we get for $n\in\NN_0$ and $t>0$ that
    
     \begin{align*}
            \abs{\delta_{right}^l G(t,n)} & \leq \frac{C_l}{\sqrt{ t} \Gamma(n+\frac{1}{2})} \int_{0}^{4t} e^{-u} u^{n-\frac{1}{2}}
                                            \left(1-\frac{u}{4t}\right)^{n-\frac{1}{2}} \left(\left( \frac{u}{4t} \right)^l \right. \\
                                          & \left. + \sum_{m=1}^{l-1} \frac{1}{t^m} 
                                            \sum_{p=1}^{\min\{m,l-m\}} \left( \frac{u}{4t}\right)^{l-m-p} \right) \, du.
        \end{align*}
    Next, we introduce the parameter $\beta$ into this equation as follows,
    \[
        \begin{aligned}
            \abs{\delta_{right}^l G(t,n)} & \leq \frac{C_{l,\beta} t^\beta}{\Gamma(n+\frac{1}{2})} \int_{0}^{4t} e^{-u} u^{n-1-\beta}
                                            \left(1-\frac{u}{4t}\right)^{n-\frac{1}{2}} \left( \frac{u}{4t} \right)^{\frac{1}{2}+\beta}
                                            \left(\left( \frac{u}{4t} \right)^l \right. \\
                                          & \left. + \sum_{m=1}^{l-1} \frac{1}{t^m} 
                                            \sum_{p=1}^{\min\{m,l-m\}} \left( \frac{u}{4t}\right)^{l-m-p} \right) \, du=:I+II, \: \:t>0,\:n\in\NN_0.
        \end{aligned}
    \]
We are interested in getting bounds that depend on $n\in\ZZ.$ For this purpose, since we are assuming  $\beta \geq -\left[\frac{l+1}{2}\right] -\frac{1}{2}$,  in particular we have that $\beta+2l+1 >0,$ so we divide $\ZZ$ into the following three disjoint subsets
$$\ZZ=\{n\in\ZZ\,:\, n>\beta+2l+1\}\cup \{n\in\ZZ\,:\, n<-(\beta+2l+1)\}\cup \{n\in\ZZ\,:\, |n|\leq \beta+2l+1\}=:A\cup B\cup D.$$ 

First we consider that $n\in A.$ Note also that, from the condition $\beta \geq -\left[\frac{l+1}{2}\right] -\frac{1}{2}$, we have in particular that $\beta+1/2+l\geq0.$ Thus, by using  \eqref{eq:inequality_beta_type} and 
\eqref{eq:gamma_estimates} we have that, for $\beta+1/2+l>0$,
    \[
        \begin{aligned}
           |I |                &   \leq \frac{C_{l,\beta} t^\beta}{\Gamma(n+\frac{1}{2})} 
                                                                         \left(\frac{\frac{1}{2}+\beta+l}{\beta+l+n}\right)^{\frac{1}{2}+\beta+l}
                                                                         \int_0^{4t} e^{-u}u^{n-1-\beta} \, du \\
                                                                &   \leq \frac{C_{l,\beta} t^\beta}{\Gamma(n+\frac{1}{2})} 
                                                                         \frac{\Gamma(n-\beta)}{1+n^{\frac{1}{2}+\beta+l}}
                                                                    \leq \frac{C_{l,\beta} t^\beta}{1+n^{1+2\beta+l}}.      
        \end{aligned}                            
    \]
    When $\beta+1/2+l=0$ the same inequality is obtained directly.

In order to get the desired bound in $A$ for $II$, we need to make some observations. Let $l\ge 2$.  Note that 
$$
-\left[\frac{l+1}{2}\right]=\left\{\begin{array}{cc}
  -l/2,   &  \text{ if  $l$ is even}\\
      -l/2 -1/2,  &  \text{ if  $l$ is odd},
\end{array}\right.
$$
and,  for $1\le m\le l-1$, it holds that
\begin{equation}\label{min}
\min\{m,l-m\}\le  \left\{
    \begin{array}{cc}
  l/2, & \text{if $l$ is even}\\
    (l-1)/2, & \text{if $l$ is odd}.\end{array}\right.
\end{equation}
Therefore, the condition $\beta \geq -\left[\frac{l+1}{2}\right] -\frac{1}{2}$,  implies  that $\beta+1/2+l-p\geq 0,$ for $1\le p\le \min\{m,l-m\}.$ In addition, if $n\in A$ then  $n-\beta-m>0,$ with $1\le m\le l-1$. Therefore,
    \[
        \begin{aligned}
           II
                                                                &   =   \frac{C_{l,\beta} t^\beta}{\Gamma(n+\frac{1}{2})}
                                                                        \sum_{m=1}^{l-1} \sum_{p=1}^{\min\{m,l-m\}}
                                                                        \int_0^{4t} e^{-u} u^{n-1-\beta-m}
                                                                            \left(1-\frac{u}{4t}\right)^{n-\frac{1}{2}}
                                                                            \left(\frac{u}{4t}\right)^{\beta+\frac{1}{2}+l-p} \, du \\
                                                                &   \leq C_{l,\beta} t^\beta \sum_{m=1}^{l-1} \sum_{p=1}^{\min\{m,l-m\}}
                                                                        \frac{\Gamma(n-\beta-m)}{\Gamma(n+\frac{1}{2})(1+n^{\beta+\frac{1}{2}+l-p})},
                                                                         \end{aligned}  \]
    where we have used \eqref{eq:inequality_beta_type}  whenever $\beta+1/2+l-p> 0$, and a direct computation when $\beta+1/2+l-p= 0$. Now by taking into account \eqref{min},  we have that every $p$  such that $1\le p\le \min\{m,l-m\}$  satisfies  $\beta+1/2+l-p=0$ if, and only if, $\beta+\left[\frac{l+1}{2}\right] +\frac{1}{2}=0$, and this holds whenever $p=l/2$ and $m=l/2$ (if $l$ is even) or $p=(l-1)/2$ and is either $m=(l-1)/2$ or $m=(l+1)/2$ (if $l$ is odd). Therefore, by using \eqref{eq:gamma_estimates} we get that 
                                                                        \begin{align*}
                                                            II    &   \leq C_{l,\beta} t^\beta \sum_{m=1}^{l-1} \sum_{p=1}^{\min\{m,l-m\}}
                                                                        \frac{1}{1+n^{l+2\beta+1+(m-p)}}
                                                                    \leq  \frac{C_{l,\beta} t^\beta}{1+n^{l+2\beta+1}}.  
        \end{align*}

Secondly, we consider $n\in B$, that is, $n < -\beta-2l-1$. Thus, $n<-l,$ and then we can write $\abs{\delta_{right}^l G(t,n)} = \abs{\delta_{right}^l G(t,\abs{n}-l)}$ for $n\in B.$ Furthermore, if $n\in B,$ we have $|n|-l>\beta+l+1,$ and in particular $|n|-l>\beta+m$ for all $m=1,\ldots, l-1.$ So, one can repeat the same steps as in the case of the subset $A$ but replacing $n$ by $\abs{n}-l$  and obtaining the same estimate.

Finally, we consider $n\in D,$ i.e., $\abs{n} \leq \beta + 2l + 1.$ Observe that $1\le  \frac{ C_{\beta,l} }{1+\abs{n}^{l+2\beta+1}}$. If $t \geq 1,$ we use \cite[Lemma 2.3]{AbadiasDeLeonContreras2022} to get
    \[
        \abs{\delta_{right}^l G(t,n)}   \leq \frac{C_{\beta,l}}{t^{\left[ \frac{l+1}{2}\right] + \frac{1}{2}}} 
                                        \leq C_{l,\beta} \frac{t^\beta}{1+\abs{n}^{l+2\beta+1}}.
    \]
    If $t \in (0,1)$ and $\beta \leq -\frac{1}{2}$, we use that $\abs{\delta_{right}^l G(t,n)} \leq C t^{-\frac{1}{2}},$ which implies
    the desired bound.

\end{proof}

\subsection{Heat and Poisson semigroups}

Here we present some technical lemmata  to prove our main results. Moreover, the following lemma will suggest 
the appropriate size condition that will be imposed to the functions so that  the discrete heat semigroup  is well-defined and satisfies the decay estimates   necessary to work in  $ \Lambda_H^{\alpha,p,q}$ spaces.

\begin{lemma}
    \label{lemma:C_space_bound_growth}
    Let $ \alpha>0$, $\alpha \notin \NN$, $1 \leq p, q \leq \infty$ and $f \in C^{\alpha, p, q}(\ZZ)$.
    Then, $\frac{f}{1+\abs{\cdot}^\alpha} \in \ell^q(\ZZ,\mu)$.
\end{lemma}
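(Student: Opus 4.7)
The plan is to extract a size estimate on $f$ from the given finite-difference condition by inverting $\delta_{right}^{l}$ via $l$-fold telescoping, combined with the discrete weighted Hardy inequalities gathered in the Appendix.

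First I would set $h:=\delta_{right}^{l} f$ and use the elementary pointwise bound $|g(n)|\le \|g\|_p$, valid for every $n\in\ZZ$ and every $1\le p\le\infty$, applied to $g=h(\cdot+j)-h(\cdot)$ at $n=0$. The definition of $C^{\alpha,p,q}(\ZZ)$ then immediately yields
\[
\sum_{j\ne 0}\frac{|h(j)-h(0)|^{q}}{|j|^{1+q(\alpha-l)}}<\infty \quad\text{when } q<\infty,\qquad \sup_{j\ne 0}\frac{|h(j)-h(0)|}{|j|^{\alpha-l}}<\infty \quad\text{when } q=\infty.
\]
Because $l=[\alpha]$ and $\alpha\notin\NN$ force $\alpha-l\in(0,1)$, we have $1+q(\alpha-l)>1$, so the contribution of $|h(0)|$ is absorbed by the convergence of $\sum_{j\ne 0}|j|^{-1-q(\alpha-l)}$. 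By the triangle inequality this yields $h/(1+|\cdot|^{\alpha-l})\in\ell^{q}(\ZZ,\mu)$ for $q<\infty$, and the pointwise bound $|h(n)|\le C(1+|n|)^{\alpha-l}$ for $q=\infty$.

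Next I would reconstruct $f$ from $h$. Telescoping $\delta_{right}^{l-1}f(n+1)=\delta_{right}^{l-1}f(n)-h(n)$ between $0$ and $n$ (with the obvious modification for $n<0$) and iterating $l$ times, one obtains a decomposition
\[
f(n)=P_{l-1}(n)+R_{l}(n),
\]
where $P_{l-1}$ is a polynomial of degree at most $l-1$ whose coefficients depend only on $f(0),\delta_{right}f(0),\ldots,\delta_{right}^{l-1}f(0)$, and $R_{l}(n)$ is an $l$-fold iterated partial sum of $h$. The polynomial part is harmless: since $\alpha>l-1$,
\[
\sum_{n\in\ZZ}\frac{|P_{l-1}(n)|^{q}}{(1+|n|^{\alpha})^{q}(1+|n|)}\lesssim \sum_{n\in\ZZ}\frac{1}{(1+|n|)^{1+q(\alpha-l+1)}}<\infty,
\]
and analogously $P_{l-1}/(1+|\cdot|^{\alpha})\in\ell^{\infty}$ for the case $q=\infty$.

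Finally, to control $R_{l}$ I would apply the weighted discrete Hardy inequality from the Appendix a total of $l$ times: each application promotes the weight exponent $s$ in the summability of a sequence $g$ against $(1+|\cdot|)^{-1-qs}$ to exponent $s+1$ for its partial sum. Starting from $h$ with $s=\alpha-l>0$ and iterating $l$ times gives $R_{l}/(1+|\cdot|^{\alpha})\in\ell^{q}(\ZZ,\mu)$; combining both pieces closes the argument. The case $q=\infty$ proceeds identically at the pointwise level: $|h(n)|\lesssim(1+|n|)^{\alpha-l}$ is promoted by a single telescoping to $|\delta_{right}^{l-1}f(n)|\lesssim(1+|n|)^{\alpha-l+1}$ (using $\alpha-l>-1$), and $l$ iterations give $|f(n)|\lesssim(1+|n|)^{\alpha}$. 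The main technical obstacle will be the careful bookkeeping of the Hardy iteration on $\ZZ$ instead of $\NN$: one must treat positive and negative indices separately, absorb the boundary terms produced at each telescoping step into $P_{l-1}$, and verify the exponent stays in the admissible range at every intermediate step---which is automatic here since $s=\alpha-l>0$ and each step only increases it.
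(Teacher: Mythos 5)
Your proposal is correct and follows essentially the same route as the paper: both arguments convert the $\ell^p$ Besov condition into a pointwise weighted bound on the top-order difference via $|g(n)|\le\|g\|_p$, reconstruct $f$ by telescoping, and control the resulting iterated partial sums with the discrete Hardy inequality (the paper works case by case for $\alpha\in(0,1)$, $\alpha\in(1,2)$ and then iterates, which is exactly the induction you package as $f=P_{l-1}+R_l$). The only cosmetic difference is that for $q=\infty$ the paper simply invokes the embedding into $C^{\alpha,\infty,\infty}(\ZZ)$ and the known H\"older-space result, whereas you run the same pointwise argument directly.
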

\begin{proof}
    We begin with the case $q = \infty$. Due to the $\ell^p(\ZZ)$ embedding, if $f \in C^{\alpha,p,\infty}(\ZZ)$
    then $f \in C^{\alpha,\infty,\infty}(\ZZ)$. Therefore, we can apply \cite[Lemma 3.1]{AbadiasDeLeonContreras2022}
    to obtain that $\frac{f}{1+\abs{\cdot}^\alpha} \in \ell^\infty (\ZZ).$

    Secondly we prove the case $1 \leq q < \infty$, and we split the proof into several cases. 
    Assume first that $\alpha \in (0,1)$. As $f \in C^{\alpha, p, q}(\ZZ)$ we have that,
   $$\norm{\frac{f}{1+\abs{\cdot}^\alpha}}_{q,\mu} \leq \norm{\frac{f-f(0)}{1+\abs{\cdot}^\alpha}}_{q,\mu} + \norm{\frac{f(0)}{1+\abs{\cdot}^\alpha}}_{q,\mu},$$ where the second summand in the above expression is finite. For the first summand, since $|f(j)-f(0)|\leq \|f(\cdot+j)-f(\cdot)\|_p$ for $j\in\ZZ,$ we have $$\norm{\frac{f-f(0)}{1+\abs{\cdot}^\alpha}}_{q,\mu}\leq \biggl( \sum_{j\neq 0} \bigg\|\frac{f(\cdot+j)-f(\cdot)}{|j|^{\alpha}}\biggr\|_{p}^q \frac{1}{|j|}\biggr)^{1/q}<\infty.$$ 
   
Now, assume that $1 < \alpha < 2$. {We have that} 
    \[
        \norm{\frac{f}{1+\abs{\cdot}^\alpha}}_{q,\mu}\leq |f(0)|+   \left(
                                                            \sum_{n=1}^\infty
                                                                \left( \frac{\abs{f(n)}}{1+n^\alpha}\right)^q \frac {1}{n}
                                                        \right)^{\frac{1}{q}}
                                                    +   \left(
                                                            \sum_{n=1}^\infty
                                                                \left( \frac{\abs{f(-n)}}{1+n^\alpha}\right)^q \frac {1}{n}
                                                        \right)^{\frac{1}{q}}
                                                =: |f(0)|+ A + B.
    \]
    By using the fact that
    \[
        \abs{f(n)} \leq \abs{f(n)-f(n-1)} + \cdots + \abs{f(1)-f(0)} + \abs{f(0)} = \sum_{j=1}^n \abs{\delta_{right}f(j-1)} + \abs{f(0)},  
    \]
    for $n \in \NN$ and by the discrete Hardy inequality (see \Cref{Hardy_discrete_inequality}) we obtain that,
    \[
        \begin{aligned}
            A   & \leq  \left(
                            \sum_{n=1}^\infty
                                \left( \sum_{j=1}^n \abs{\delta_{right}f(j-1)} \right)^q \frac{1}{n^{ \alpha q  +1}}
                        \right)^{\frac{1}{q}}
                    +   \left(
                            \sum_{n=1}^\infty
                                \left( \frac{\abs{f(0)}}{1+n^\alpha}\right)^q \frac {1}{n}
                        \right)^{\frac{1}{q}} \\
                & \leq  C \left(
                                \sum_{j=1}^\infty
                                \left( j \abs{\delta_{right}f(j-1)} \right)^q \frac{1}{j^{ \alpha q +1}}
                            \right)^{\frac{1}{q}} 
                    +   C \\
                & \leq  C \left(
                                \sum_{j=1}^\infty
                                \left( \frac{\abs{\delta_{right}f(j-1) - \delta_{right}f(-1)}}{j^{\alpha-1}}\right)^q \frac{1}{j}
                            \right)^{\frac{1}{q}}
                    +       C\left(
                                \sum_{j=1}^\infty
                                \left( \frac{\abs{\delta_{right}f(-1)}}{j^{\alpha-1}}\right)^q \frac{1}{j}
                            \right)^{\frac{1}{q}}   
                    +   C \\
                & \leq C  \left(
                                \sum_{j\neq 0}
                                \bigg\|\frac{\delta_{right}f(\cdot+j)-\delta_{right}f(\cdot)}{|j|^{\alpha-1}}\biggr\|_{p}^q\frac{1}{|j|}
                            \right)^{\frac{1}{q}}
                    +  C < \infty.
        \end{aligned}         
    \]
   For $B$ we have to consider the following inequality,
    \[
        \abs{f(-n)} \leq \abs{f(-n)-f(n+1)} + \cdots + \abs{f(-1)-f(0)} + \abs{f(0)} = \sum_{j=1}^n \abs{\delta_{right}f(-j)} + \abs{f(0)},  
    \]
    and use the same techniques as for $A$ to finish the proof for the case $1< \alpha < 2$.

    Lastly, for $\alpha > 2$ the proof follows by writing  $f$ in terms of differences of order $[\alpha]$ and by iterating the previous arguments.

\end{proof}

\begin{remark}
    \label{remark:some_semigroup_properties_work_fine}
    Let $1 \leq q \leq  \infty$ and $f:\ZZ \to \RR$ satisfying $\frac{f}{1+\abs{\cdot}^\alpha} \in \ell^q(\ZZ,\mu)$ for some
    $\alpha>0.$ 
    From the embedding of the
    $\ell^q(\ZZ)$ spaces we have that $\frac{f}{1+\abs{\cdot}^{\alpha+\frac{1}{q}}} \in \ell^\infty(\ZZ)$. Therefore, 
    \cite[Lemma 2.12]{AbadiasDeLeonContreras2022} gives that the heat semigroup $e^{t \Delta_d} f$ is well-defined for every $t>0$. Furthermore, from \cite[Lemma 2.11]{AbadiasDeLeonContreras2022} it follows that $\delta_{right}^m e^{t\Delta_d}f$ and $\partial_t^l e^{t\Delta_d}f$,  $m,l\in\NN$, are well-defined,
\begin{align*}
    \delta_{right}e^{t\Delta_d}f(n)&=\sum_{j\in\ZZ} (\delta_{right}G(t,n-j))f(j)=\sum_{j\in\ZZ} G(t,j)\delta_{right}f(n-j), \quad n\in\ZZ,
\end{align*}
 and for $t=t_1+t_2,$ where $t,t_1,t_2>0,$ 
    \begin{align*}
\partial_te^{t\Delta_d}f(n)|_{t=t_1+t_2}=\sum_{j\in\ZZ}\partial_{t_1}G(t_1,j)e^{t_2\Delta_d}f(n-j)=\sum_{j\in\ZZ}G(t_1,j)\partial_{t_2}e^{t_2\Delta_d}f(n-j), \quad n\in\ZZ.
    \end{align*}
\end{remark}

Next lemma shows an estimate for the size of the heat semigroup and some conditions under the derivatives of the heat semigroup vanish at infinity.

\begin{lemma}
    \label{lemma:decay_heat_semigroup}
    Let  $1 \leq q \leq \infty$ and  $f:\ZZ \to \RR$  satisfying $\frac{f}{1+\abs{\cdot}^\alpha} \in \ell^q(\ZZ,\mu)$,
    for certain $\alpha>0$. Then,
    \begin{enumerate}
  \item    For every $t>0$, it holds that $\norm{\frac{e^{t\Delta_d}f}{1+|\cdot|^\alpha}}_{\ell^q(\ZZ,\mu)}\le C\left(1+t^{\frac{\alpha+1/q}{2}}\right)$.
 \item For every $n\in \ZZ$ and  $m, l \in \NN_0$ such that $\frac{m}{2}+l>\frac{\alpha}{2}$, we have that
    \[
        \partial_t^l\delta_{right}^{m} e^{t \Delta_d} f(n) \to 0, \quad \text{as } t \to \infty.
    \]  \end{enumerate}
\end{lemma}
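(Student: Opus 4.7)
The plan is to set $h(n):=|f(n)|/(1+|n|^\alpha)$, so that by assumption $h\in\ell^q(\ZZ,\mu)$, and to reduce the desired estimate to a convolution-type bound. First I would use the elementary inequality $1+|n-j|^\alpha\le C(1+|n|^\alpha)(1+|j|^\alpha)$ to obtain the pointwise control
\[
\frac{|e^{t\Delta_d}f(n)|}{1+|n|^\alpha}\le C\sum_{j\in\ZZ} w_t(j)\,h(n-j),\qquad w_t(j):=G(t,j)(1+|j|^\alpha).
\]
For $q<\infty$, I would apply Jensen's inequality $(\sum_j w_t(j)\phi(j))^q\le\|w_t\|_1^{q-1}\sum_j w_t(j)\phi(j)^q$, sum the result against $\mu$, swap the two sums, and use the bound $(1+|m+j|)^{-1}\le(1+|j|)/(1+|m|)$ to move the shift off the inner weight. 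Everything then reduces to estimating $\|w_t\|_1$ and $\sum_j w_t(j)(1+|j|)$, which via Jensen together with the identities \eqref{pol} (giving $\sum_j G(t,j)|j|^{2k}=p_k(2t)$) satisfy $\sum_j G(t,j)|j|^s\le C(1+t^{s/2})$ for every $s>0$. Combining these and taking $q$-th roots with $(1+x)^{1/q}\le C(1+x^{1/q})$ produces the claimed exponent $(\alpha+1/q)/2$. The case $q=\infty$ will be handled directly: from $|f(n)|\le C(1+|n|^\alpha)$ and the same moment estimate one gets $|e^{t\Delta_d}f(n)|\le C(1+|n|^\alpha)(1+t^{\alpha/2})$, matching the formula with $1/q=0$.

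\textbf{Plan for part (2).} The key preliminary step will be the commutation identity
\[
\partial_t^l\delta_{right}^m G(t,n)=\delta_{right}^{M}G(t,n-l),\qquad M:=m+2l,
\]
which I would prove by a short induction using the heat equation $\partial_t G=\Delta_d G$ together with the observation $\Delta_d G(t,n)=\delta_{right}^2 G(t,n-1)$. This recasts $\partial_t^l\delta_{right}^m e^{t\Delta_d}f(n)=\sum_j \delta_{right}^M G(t,n-l-j)\,f(j)$, to which \Cref{lemma:inequality_heat_kernel} applies for $t\ge 1$ with any parameter $\beta\ge -[(M+1)/2]-1/2$, yielding $|\delta_{right}^M G(t,n-l-j)|\le C t^\beta/(1+|n-l-j|^{M+2\beta+1})$. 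I would then split $|f(j)|=h(j)(1+|j|^\alpha)$ and apply weighted Hölder's inequality against $\mu$; in each case $1\le q\le\infty$, the resulting dual-norm sum is finite precisely when $\beta>(\alpha-M)/2$. The hypothesis $m/2+l>\alpha/2$ means $M>\alpha$, so this threshold is strictly negative, and a direct case check (even vs.\ odd $M$) shows it lies above $-[(M+1)/2]-1/2$; hence an admissible $\beta<0$ exists. The bound $|\partial_t^l\delta_{right}^m e^{t\Delta_d}f(n)|\le Ct^\beta\|h\|_{\ell^q(\ZZ,\mu)}$ then tends to $0$ as $t\to\infty$.

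\textbf{Main obstacle.} The delicate point throughout is the non-translation-invariance of $\mu(n)=1/(1+|n|)$: in (1) it forces the auxiliary inequality $(1+|m+j|)^{-1}\le(1+|j|)/(1+|m|)$ and the extra factor $(1+|j|)$ in the first moment of $w_t$, which is exactly what produces the correction $1/(2q)$ in the exponent; in (2) it complicates the Hölder bookkeeping but does not change the final threshold $\beta>(\alpha-M)/2$, and it is precisely this threshold together with the admissibility window of \Cref{lemma:inequality_heat_kernel} that makes the hypothesis $m/2+l>\alpha/2$ sharp for choosing a strictly negative $\beta$.
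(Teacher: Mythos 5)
Your proposal is correct, and part (2) is essentially the paper's own argument: the same commutation identity $\partial_t^l\delta_{right}^m e^{t\Delta_d}f(n)=\delta_{right}^{2l+m}e^{t\Delta_d}f(n-l)$, the same application of \Cref{lemma:inequality_heat_kernel} with a strictly negative $\beta$ in the window you describe (the paper takes $\beta=\tfrac{\alpha-2l-m+\varepsilon}{2}$, which sits exactly in your interval $((\alpha-M)/2,0)$), and the same H\"older step against $\ell^q(\ZZ,\mu)$. For part (1) your route differs in a genuine but modest way. The paper applies Minkowski's inequality to pull the $\ell^q(\ZZ,\mu)$-norm inside the convolution and then splits the resulting sum into four pieces ($j=0$, $n=0$, $n=j$, and the generic terms), handling the exceptional diagonal terms separately because the weight $(1+|n|)^{-1}$ is not translation invariant; the generic piece is controlled by the Peetre-type quotient bound and reduces to the moment $\sum_j G(t,j)(1+|j|^\alpha)|j|^{1/q}\le C(1+t^{(\alpha+1/q)/2})$. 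You instead keep the pointwise Peetre bound, apply Jensen with the probability measure $w_t/\norm{w_t}_1$, and shift the weight via $(1+|m+j|)^{-1}\le(1+|j|)(1+|m|)^{-1}$; this avoids the case splitting entirely and the exponent arithmetic $\tfrac{q-1}{q}\cdot\tfrac{\alpha}{2}+\tfrac{1}{q}\cdot\tfrac{\alpha+1}{2}=\tfrac{\alpha+1/q}{2}$ checks out. Both arguments ultimately rest on the same Gaussian moment estimate obtained from \eqref{pol} (your Jensen derivation of $\sum_j G(t,j)|j|^s\le C(1+t^{s/2})$ is equivalent to the paper's $\min\{|j|/\sqrt{t},|j|\}$ trick). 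The only point worth making explicit in a write-up is the justification for exchanging the sum over $j$ with the $\ell^q(\ZZ,\mu)$-norm (absolute convergence of $e^{t\Delta_d}f$), which is supplied by \Cref{remark:some_semigroup_properties_work_fine}.
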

\begin{proof}

First we prove \textit{(1)}. The case  $q=\infty$ was proved in \cite[Lemma 2.12 A]{AbadiasDeLeonContreras2022}. Let $1 \leq q < \infty,$ and $t>0$. By using Minkowski's inequality  we have that
\begin{align*}\norm{\frac{e^{t\Delta_d}f}{1+|\cdot|^\alpha}}_{\ell^q(\ZZ,\mu)}&=\left( \sum_{n\in\ZZ}\left|\sum_{j\in\ZZ}G(t,j)\frac{f(n-j)}{(1+|n|^\alpha)(1+|n|)^{1/q}}\right|^q\right)^{1/q}\\
&\le \sum_{j\in\ZZ}G(t,j)\left( \sum_{n\in\ZZ}\left(\frac{|f(n-j)|}{(1+|n|^\alpha)(1+|n|)^{1/q}}\right)^q\right)^{1/q}\\
&\le C G(t,0)+\sum_{j\in\ZZ\setminus\{0\}}G(t,j)|f(-j)|\\
&+\sum_{j\in\ZZ\setminus\{0\}}G(t,j)\frac{|f(0)|}{(1+|j|^\alpha)(1+|j|)^{1/q}}\\
&+\sum_{j\in\ZZ\setminus\{0\}}G(t,j)\left( \sum_{n\in\ZZ\setminus\{0,j\}}\left(\frac{|f(n-j)|}{(1+|n|^\alpha)|n|^{1/q}}\right)^q\right)^{1/q}\\
&=:I+II+III+IV.
\end{align*}
Next, we work with each summand above separately. Note that the $I$ is bounded.

Secondly, since $\frac{f}{1+\abs{\cdot}^\alpha} \in \ell^q(\ZZ,\mu)$ implies that $\frac{f}{1+\abs{\cdot}^{\alpha+1/q}} \in \ell^\infty(\ZZ)$, by taking $m$ as the smallest integer such that $2m>\alpha+1/q,$ we have that \begin{align*}II=\sum_{j\in\ZZ\setminus\{0\}}G(t,j)|f(-j)|&\leq C \norm{\frac{f}{1+\abs{\cdot}^{\alpha+1/q}}}_\infty \sum_{j\in\ZZ\setminus\{0\}}G(t,j)(1+ |j|^{\alpha+1/q})\\
&\leq C \norm{\frac{f}{1+\abs{\cdot}^{\alpha+1/q}}}_\infty \biggl(1+\sum_{|j|\leq \sqrt{t}}G(t,j)|j|^{\alpha+1/q}\\
&+\sum_{|j|> \sqrt{t}}G(t,j)|j|^{\alpha+1/q}\min\left\{\frac{|j|}{\sqrt{t}},|j|\right\}^{2m-\alpha-1/q}\biggr)\\
&\leq C \norm{\frac{f}{1+\abs{\cdot}^{\alpha+1/q}}}_\infty \biggl(1+t^{\frac{\alpha+1/q}{2}}+Cp_m(2t)\min\left\{\frac{1}{t^{m-\frac{\alpha+1/q}{2}}},1\right\}\biggr)\\
&\leq C\norm{\frac{f}{1+\abs{\cdot}^{\alpha+1/q}}}_\infty (1+t^{\frac{\alpha+1/q}{2}}),
\end{align*}
where we have used that $\|G(t,\cdot)\|_1=1$ and that $|p_m(2t)|\leq C$ for $0<t<1,$ and $|p_m(2t)|\leq Ct^m$ for $t\geq 1,$ see \eqref{pol}.

 For $III,$ since $|j|\ge 1,$ we get that $III\le C \|G(t,\cdot)\|_1=C$.

Finally, note that if $j\neq 0$ and $n\neq 0,j,$ we have $$\frac{(1+|n-j|^\alpha)|n-j|^{1/q}}{(1+|n|^\alpha)|n|^{1/q}}\leq C (1+|j|^\alpha)|j|^{1/q},$$ 
so by proceeding as in  the case $II,$ we get
$$IV\leq C\sum_{j\in\ZZ\setminus\{0\}}G(t,j)(1+|j|^\alpha)|j|^{1/q}\leq C\left(1+t^{\frac{\alpha+1/q}{2}}\right).$$

Now we shall prove  \textit{(2).} The case $q=\infty$ was proved in \cite[Lemma 2.13]{AbadiasDeLeonContreras2022}.

   Let $1 \leq q < \infty$, $m, l \in \NN_0$ such that $\frac{m}{2}+l>\frac{\alpha}{2}$  and $n\in\ZZ$. Since the semigroup is the solution to the heat equation, we can write
    \[
       \abs{\partial_t^l\delta_{right}^m e^{t \Delta_d} f(n)}  = \abs{\delta_{right}^{2l+m} e^{t \Delta_d} f(n-l)}.
    \]
    Now 
    we choose $\varepsilon>0$ small enough such that $\alpha -2l -m + \epsilon < 0$.
    Then, by using \Cref{lemma:inequality_heat_kernel} for $t\ge 1$ and $\beta = \frac{\alpha -2l -m + \varepsilon}{2}$, 
    we obtain that
    \[
        \begin{aligned}
            \abs{\partial_t^l\delta_{right}^m e^{t \Delta_d} f(n)} 
                & \leq C_{l,m,\alpha} \sum_{j \in \ZZ} \frac{t^{\beta} \abs{f(n-l-j)}}{1+\abs{j}^{\alpha+\varepsilon+1}} \\
                & = C_{l,m,\alpha} t^\beta  \sum_{j \in \ZZ} \frac{\abs{f(n-l-j)}(1+|n-l-j|^{\alpha})}{(1+|n-l-j|^{\alpha})(1+\abs{j}^{\alpha+\varepsilon+1})}.
        \end{aligned}
    \]
   Hölder's inequality gives the convergence for the  series above and since $\beta<0$ it follows that $\abs{\partial_t^l\delta_{right}^m e^{t \Delta_d} f(n)} \to 0$, as $t \to \infty$.
\end{proof}

The following lemma  provides the size condition for the sequence $e^{\cdot^2 \Delta_d} f(\cdot): \ZZ \to \RR$, given by $e^{n^2 \Delta_d} f(n) = \sum_{j \in \ZZ } G(n^2,n-j) f(j)$,
 $n \in \ZZ$, which will be a key point in the proof of one of our main results.  

\begin{lemma}
    \label{lemma:inequality_norms_n_square_semigroup}
    Let $1\leq q < \infty$, $l \in \NN_{0}$, $\alpha \geq l$ and $f:\ZZ \to \RR$ such that $\frac{f}{1+|\cdot|^{\alpha}}\in \ell^q(\ZZ,\mu)$. Then, $\frac{e^{\cdot^2 \Delta_d} \delta_{right}^l f(\cdot)}{1+\abs{\cdot}^{\alpha-l}}\in \ell^q(\ZZ,\mu).$
\end{lemma}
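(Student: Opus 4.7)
The plan is to expand $e^{n^2\Delta_d}\delta^l_{right}f(n)=\sum_{m\in\ZZ}\delta^l_{right}G(n^2,n-m)f(m)$ (valid by \Cref{remark:some_semigroup_properties_work_fine}), write $f(m)=(1+|m|^\alpha)g(m)$ with $g:=f/(1+|\cdot|^\alpha)\in\ell^q(\ZZ,\mu)$ by hypothesis, and split the sum in $m$ into the near region $A_n:=\{m\in\ZZ:|m|\le 2|n|\}$ and the far region $B_n:=\{m\in\ZZ:|m|>2|n|\}$, applying \Cref{lemma:inequality_heat_kernel} with a tailored $\beta$ in each case. The term $n=0$ is harmless, since $e^{0\cdot\Delta_d}\delta^l_{right}f(0)=\delta^l_{right}f(0)$ is a finite linear combination of $f(0),\dots,f(l)$.

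For $n\neq 0$ (so $t=n^2\ge 1$), on $A_n$ the choice $\beta=-(l+1)/2$ is admissible (since $\beta\ge -[(l+1)/2]-1/2$ for every $l\in\NN_0$) and yields $l+2\beta+1=0$, so that \Cref{lemma:inequality_heat_kernel} gives the uniform estimate $|\delta^l_{right}G(n^2,n-m)|\le C/|n|^{l+1}$. Combining this with $1+|m|^\alpha\le C(1+|n|^\alpha)$ on $A_n$ and dividing by $1+|n|^{\alpha-l}$ produces
\[
\frac{1}{1+|n|^{\alpha-l}}\sum_{m\in A_n}|\delta^l_{right}G(n^2,n-m)|\,|f(m)|\le \frac{C}{|n|}\sum_{|m|\le 2|n|}|g(m)|.
\]
On $B_n$ we have $|n-m|\ge |m|/2$, and I take $\beta=(\alpha+1-l)/2\ge 1/2$ (admissible since $\alpha\ge l$), which yields $|\delta^l_{right}G(n^2,n-m)|\le C|n|^{\alpha+1-l}/|m|^{\alpha+2}$. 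Combined with $1+|m|^\alpha\le C|m|^\alpha$ and the same division, this produces
\[
\frac{1}{1+|n|^{\alpha-l}}\sum_{m\in B_n}|\delta^l_{right}G(n^2,n-m)|\,|f(m)|\le C|n|\sum_{|m|>2|n|}\frac{|g(m)|}{|m|^2}.
\]

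To close the argument I invoke the discrete Hardy inequalities from \Cref{Appendix}. Setting $b_k:=|g(k)|+|g(-k)|$, the direct weighted inequality $\sum_{n\ge 1}n^{-q-1}\bigl(\sum_{k=1}^{2n}b_k\bigr)^q\le C\sum_k b_k^q/k$ bounds the $\ell^q(\ZZ,\mu)$-norm of the $A_n$-contribution by $C\|g\|_{\ell^q(\ZZ,\mu)}^q$. Likewise, the dual (Copson-type) inequality $\sum_{n\ge 1}n^{q-1}\bigl(\sum_{k\ge n}c_k\bigr)^q\le C\sum_k k^{2q-1}c_k^q$ applied with $c_k:=b_k/k^2$ bounds the $B_n$-contribution by $C\|g\|_{\ell^q(\ZZ,\mu)}^q$; the indices $n\le -1$ are handled symmetrically. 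The main obstacle is the coupled choice of $\beta$: on $A_n$ it must be as negative as \Cref{lemma:inequality_heat_kernel} allows, in order to absorb the factor $1+|n|^\alpha$ that arises from $1+|m|^\alpha$ after dividing by $1+|n|^{\alpha-l}$, while on $B_n$ it must be taken large enough that the decay in $|n-m|\gtrsim|m|$ defeats the weight $|m|^\alpha$; the emerging $C/|n|$ and $C|n|$ prefactors are precisely what the direct and dual Hardy inequalities are designed to absorb.
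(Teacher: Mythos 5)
Your proposal is correct and follows essentially the same route as the paper's proof: isolate $n=0$ via $I_j(0)=\delta_0(j)$, split into the near region $|m|\lesssim |n|$ (kernel bound with $\beta=-(l+1)/2$) and the far region $|m|>2|n|$ where $|n-m|\ge |m|/2$ (kernel bound with $\beta=(\alpha-l+1)/2$), and close with the direct and dual discrete Hardy inequalities. The only cosmetic difference is that the paper further subdivides the near region into $j=0$, $1\le j\le n$ and $n<j\le 2n$ and treats the middle band by a direct computation, whereas you absorb the whole band $|m|\le 2|n|$ into a single dilated Hardy estimate; both are valid.
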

\begin{proof}
First, recall that $I_j(0)=\delta_0(j)$ for $j\in\ZZ,$ (where $\delta_0$ denotes the Dirac delta sequence), so $\delta_{right}^lG(0,-j)=(-1)^j\binom{l}{j},$ for $j=0,\ldots,l,$ and $\delta_{right}^l G(0,-j)=0$ in another case. Then, 
\[
    \begin{array}{l}
\displaystyle\norm{\frac{e^{\cdot^2 \Delta_d} \delta_{right}^l f(\cdot)}{1+\abs{\cdot}^{\alpha-l}}}_{q,\mu} 
                 \le \left(
                            \sum_{n\in\ZZ}
                                \left(
                                    \sum_{j\in\ZZ}
                                    \frac{\abs{\delta_{right}^lG(n^2,n-j)}\abs{f(j)}}{1+\abs{n}^{\alpha-l}}
                                \right)^q
                                \mu(n)
                        \right)^{\frac{1}{q}} \\ \\
                 \displaystyle\leq  C_{l}
                        \max\{|f(j)|\,:\, j=0,\ldots,l\}+\left( 
                            \sum_{n\neq 0}
                                \left(
                                    \sum_{j\in\ZZ}
                                    \frac{\abs{\delta_{right}^lG(n^2,n-j)}\abs{f(j)}}{1+\abs{n}^{\alpha-l}}
                                \right)^q
                                \mu(n)
                        \right)^{\frac{1}{q}}.

    \end{array}
   \]
So, we have to prove the convergence of the last summand. For this purpose, we consider next disjoint partition 
\begin{align*}
&\{(n,j)\,:\, n\in\ZZ\setminus \{0\},j\in\ZZ\}\\&\qquad=\{(n,j)\,:\, n\geq 1,j\geq 1\}\cup \{(n,j)\,:\, n\geq 1,j\leq -1\}\cup \{(n,j)\,:\, n\geq 1,j=0\}\\ 
&\qquad\cup \{(n,j)\,:\, n\leq  -1,j\geq 1\}\cup \{(n,j)\,:\, n\leq -1,j\leq -1\}\cup \{(n,j)\,:\, n\leq -1,j=0\}\\&\qquad =:A.1\cup\ldots \cup A.6.
\end{align*}

By \Cref*{lemma:inequality_heat_kernel}, we have that 
    $\abs{\delta_{right}^lG(n^2,n-j)} \leq C_{\beta,l} \frac{\abs{n}^{2\beta}}{1+\abs{n-j}^{2\beta+l+1}}$, for $n \in \ZZ\setminus\{0\},$ 
     $j \in \ZZ$ and any  $\beta \geq - [\frac{l+1}{2}]-\frac{1}{2}.$ 


On the one hand, we have that $$\abs{\delta_{right}^lG(n^2,n)} \leq C_{\beta,l} \abs{n}^{-(l+1)},\quad (n,0)\in A.3,A.6,$$ so
$$\left( \sum_{n\neq 0}\left(\frac{\abs{\delta_{right}^lG(n^2,n)}\abs{f(0)}}{1+\abs{n}^{\alpha-l}}
                                \right)^q
                                \mu(n)
                        \right)^{\frac{1}{q}}\leq C_{\beta,l}\abs{f(0)}
            \left(
                \sum_{n=1}^\infty
                    \left(
                        \frac{n^{-l-1}}{(1+n^{\alpha-l})}
                    \right)^q
                    \frac{1}{n}
            \right)^{\frac{1}{q}}
        \leq C_{\beta,l} \abs{f(0)}.$$

Now, by taking $\beta = -\frac{l+1}{2},$ we get  $$\abs{\delta_{right}^lG(n^2,n-j)} \leq C_{\beta,l} \abs{n}^{-(l+1)},\quad 1\le j\le 2n.$$ 
Thus, by using Hardy's inequality (see \Cref{Hardy_discrete_inequality}) we have that \begin{align*}
\left( \sum_{n=1}^{\infty}\left(\sum_{j =1}^{n}\frac{\abs{\delta_{right}^lG(n^2,n-j)}\abs{f(j)}}{1+\abs{n}^{\alpha-l}}
                                \right)^q
                                \mu(n)
                        \right)^{\frac{1}{q}}&\leq  C_{\beta,l}  \left(
                        \sum_{n=1}^\infty
                            \left(
                                \sum_{j=1}^n
                                \frac{|f(j)|}{n^{\alpha+1}}
                            \right)^q
                        \frac{1}{n}
                    \right)^{\frac{1}{q}}
            \\
            &\leq C_{\beta,l}  \left(
                        \sum_{n=1}^\infty
                            \left(
                                \frac{|f(n)|}{n^\alpha}
                            \right)^q
                        \frac{1}{n}
                    \right)^{\frac{1}{q}}
           \\& \leq C_{\beta,l} \norm{\frac{f(\cdot)}{1+\abs{\cdot}^\alpha}}_{q,\mu},
\end{align*}
and by  direct computations  we get that
\begin{align*}
\left( \sum_{n=1}^{\infty}\left(\sum_{j =n+1}^{2n}\frac{\abs{\delta_{right}^lG(n^2,n-j)}\abs{f(j)}}{1+\abs{n}^{\alpha-l}}
                                \right)^q
                                \mu(n)
                        \right)^{\frac{1}{q}}&\leq C_{\beta,l}  \left(
                            \sum_{n=1}^\infty
                                \left(
                                    \sum_{j=n+1}^{2n}
                                    \frac{|f(j)|}{n^{\alpha+1}}
                                \right)^q
                            \frac{1}{n}
                        \right)^{\frac{1}{q}}
                    \\
                    &\leq C_{\beta,l}  \left(
                            \sum_{n=1}^\infty
                                \left(
                                    \sum_{j=n+1}^{2n}
                                    \frac{|f(j)| n}{j^{\alpha+2}}
                                \right)^q
                            \frac{1}{n}
                        \right)^{\frac{1}{q}} \\
                &   \leq C_{\beta,l}  \left(
                            \sum_{n=1}^\infty
                                \left(
                                    \frac{|f(n)| n^2}{n^{\alpha+2}}
                                \right)^q
                            \frac{1}{n}
                        \right)^{\frac{1}{q}}\\&
                    \leq C_{\beta,l} \norm{\frac{f(\cdot)}{1+\abs{\cdot}^\alpha}}_{q,\mu}.
\end{align*}

On the other hand, by using \Cref{lemma:inequality_heat_kernel} with  $\beta = \frac{\alpha-l+1}{2}$ one gets $$\abs{\delta_{right}^lG(n^2,n-j)} \leq C_{\beta,l} \frac{\abs{n}^{\alpha-l+1}}{1+(j-n)^{\alpha+2}}, \quad j\ge 2n+1.$$ 
Notice that in this  case $j-n>\frac{j}{2}$, so by Hardy's inequality (see \Cref{Hardy_discrete_inequality}) we have that
\begin{align*}
\left( \sum_{n=1}^{\infty}\left(\sum_{j =2n+1}^{\infty}\frac{\abs{\delta_{right}^lG(n^2,n-j)}\abs{f(j)}}{1+\abs{n}^{\alpha-l}}
                                \right)^q
                                \mu(n)
                        \right)^{\frac{1}{q}}&\leq C_{\beta,l}  \left(
                                \sum_{n=1}^\infty
                                    \left(
                                        \sum_{j=2n+1}^\infty
                                        \frac{|f(j)| n }{(j-n)^{\alpha+2}}
                                    \right)^q
                                \frac{1}{n}
                            \right)^{\frac{1}{q}}\\
                    &\leq C_{\beta,l}  \left(
                                \sum_{n=1}^\infty
                                    \left(
                                        \sum_{j=2n+1}^\infty
                                        \frac{|f(j)| n}{j^{\alpha+2}}
                                    \right)^q
                                \frac{1}{n}
                            \right)^{\frac{1}{q}} \\
                &   \leq C_{\beta,l} \norm{\frac{f(\cdot)}{1+\abs{\cdot}^\alpha}}_{q,\mu}.
\end{align*}

To finish the proof, we  can proceed in the same way for the subsets $A.2,A.4, A.5$, by taking $|j|,|n|$ instead of $j,n,$ and by taking into account that $|n-j|\geq ||n|-|j||.$ 
\end{proof}

{Now include some lemmata which state mixed-norm estimates for the derivatives of the heat and Poisson semigroups that provide alternative semigroup conditions to characterize the discrete heat and Poisson Besov spaces.}

\begin{lemma}
    \label{lemma:inequality_add_derivative}
    Let $\beta > 0 $, $1\leq p,q \leq \infty$ and $f:\ZZ \to \RR$.
    \begin{itemize}
        \item[(i)] Suppose that $\frac{f}{1+\abs{\cdot}^\alpha} \in \ell^q(\ZZ,\mu)$ for some $\alpha > 0$. 
        If $m, l \in \NN_0$, such that $\frac{m}{2}+l>\frac{\alpha}{2},$ then
        \[
            \norm{t^{\beta} \partial_t^l \delta_{right}^m e^{t \Delta_d }f}_{p,q} 
            \leq 
            \frac{1}{\beta} \norm{t^{\beta+1} \partial_t^{l+1} \delta_{right}^m e^{t \Delta_d }f}_{p,q},\quad t>0.
        \]
        \item[(ii)] Suppose that $f$ satisfies $\sum_{j \in \ZZ} \frac{\abs{f(j)}}{1+\abs{j}^2} < \infty$. If $m, l\in \NN_0$,
        such that $m+l \geq 1,$ then
        \[
            \norm{y^{\beta} \partial_y^l \delta_{right}^{m} e^{-y\sqrt{-\Delta_d}}f}_{p,q} 
            \leq 
            \frac{1}{\beta} \norm{y^{\beta+1} \partial_y^{l+1} \delta_{right}^{m} e^{-y\sqrt{-\Delta_d}}f}_{p,q},\quad y>0.
        \]
    \end{itemize}
\end{lemma}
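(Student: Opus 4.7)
The plan is to invoke the fundamental theorem of calculus to express a lower-order derivative as the tail integral of the next derivative, and then apply a weighted Hardy inequality in the $t$- (resp.\ $y$-) variable. The approach is parallel in both parts; the only nontrivial input is the vanishing of the relevant derivative as $t\to\infty$ (resp.\ $y\to\infty$), which is precisely what the hypotheses $\frac{m}{2}+l>\frac{\alpha}{2}$ in (i) and $m+l\geq 1$ in (ii) guarantee.

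For (i), Lemma \ref{lemma:decay_heat_semigroup}(2) directly gives $\partial_t^l\delta_{right}^m e^{t\Delta_d}f(n)\to 0$ as $t\to\infty$ for every $n\in\ZZ$, so that
\[
\partial_t^l\delta_{right}^m e^{t\Delta_d}f(n) \;=\; -\int_t^\infty \partial_s^{l+1}\delta_{right}^m e^{s\Delta_d}f(n)\,ds.
\]
Taking the $\ell^p(\ZZ)$-norm in $n$ under the integral via Minkowski's inequality, multiplying by $t^\beta$, and then taking the $L^q(dt/t)$-norm, the inequality reduces to the continuous weighted Hardy inequality
\[
\left(\int_0^\infty\!\Bigl(t^{\beta}\!\int_t^\infty\! g(s)\,ds\Bigr)^{\!q}\frac{dt}{t}\right)^{\!1/q} \leq \frac{1}{\beta}\!\left(\int_0^\infty\!\bigl(s^{\beta+1}g(s)\bigr)^{q}\frac{ds}{s}\right)^{\!1/q}
\]
applied to $g(s):=\norm{\partial_s^{l+1}\delta_{right}^m e^{s\Delta_d}f}_p$. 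This estimate is recorded in Section \ref{Appendix}; the endpoint $q=\infty$ is handled directly via the elementary identity $t^\beta\int_t^\infty s^{-\beta-1}\,ds=1/\beta$, and the case $q=1$ is Fubini. Combining the two displays produces the desired bound.

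For (ii) the scheme is identical once the vanishing $\partial_y^l\delta_{right}^m e^{-y\sqrt{-\Delta_d}}f(n)\to 0$ as $y\to\infty$ is established for $m+l\geq 1$. The natural route is to differentiate the Bochner subordination formula
\[
e^{-y\sqrt{-\Delta_d}}f(n)=\frac{y}{2\sqrt{\pi}}\int_0^\infty\frac{e^{-y^2/(4t)}}{t^{3/2}}\,e^{t\Delta_d}f(n)\,dt,
\]
interchange derivatives with the integral (justified by the growth condition $\sum_j|f(j)|/(1+|j|^2)<\infty$, which matches the decay of the Poisson kernel), and dominate the resulting kernel using \Cref{lemma:inequality_heat_kernel}. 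The condition $m+l\geq 1$ is sharp: a single derivative (spatial or temporal) introduces the cancellation needed at infinity, reflecting that $e^{-y\sqrt{-\Delta_d}}f$ itself need not vanish as $y\to\infty$ (take $f\equiv 1$). Once this decay is in hand, the Hardy-inequality argument above runs verbatim with $t$ replaced by $y$. The main technical obstacle is therefore the dominated-convergence step controlling $\partial_y^l\delta_{right}^m P(y,n)$ uniformly enough against the growth of $f$; the subordination representation and the kernel estimates from Section \ref{Gaussiankernel} are tailored precisely for this.
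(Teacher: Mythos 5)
Your proposal is correct and follows essentially the same route as the paper: the paper's proof simply says the argument runs parallel to Taibleson's Lemmata 4 c) and $4^*$ c) (i.e.\ the fundamental theorem of calculus from the vanishing at infinity, Minkowski, and the weighted Hardy inequality with constant $1/\beta$), invoking \Cref{lemma:decay_heat_semigroup} for the heat-semigroup decay and a lemma from the authors' earlier work for the Poisson-semigroup decay. The only cosmetic difference is that for part (ii) you sketch a re-derivation of the decay $\partial_y^l\delta_{right}^m e^{-y\sqrt{-\Delta_d}}f(n)\to 0$ via the subordination formula, whereas the paper cites this directly as a known result.
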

\begin{proof}
    The proof of this result runs parallel to the one of \cite[Lemmata 4  c) and $4^*$ c)]{Taibleson1964}. In our case, we also have to use 
    \Cref{lemma:decay_heat_semigroup}, so that $\partial_t^l \delta_{right}^m e^{t\Delta_d}f(n) \to 0,$ as $t \to \infty$, and \cite[Lemma 2.13]{AbadiasDeLeonContreras2022}, so that $\partial_y^l \delta_{right}^m e^{-y\sqrt{-\Delta_d}}f(n) \to 0$, as $y \to \infty.$
\end{proof}

\begin{lemma}
    \label{lemma:inequality_remove_derivative}
    Let $\beta > 0$, $1\leq p,q \leq \infty$, $m,l \in \NN_0$, and $f:\ZZ \to \RR$.
    \begin{itemize}
        \item[(i)] Suppose that $\frac{f}{1+\abs{\cdot}^\alpha} \in \ell^q(\ZZ, \mu)$ for some $\alpha > 0$.
        \begin{itemize}
        
         \item If $l\in\NN,$ there is $C>0$ such that $$\norm{t^{\beta+1} \partial_t^l \delta_{right}^{m} e^{t \Delta_d}f}_{p,q}  \leq C \norm{t^{\beta} \partial_t^{l-1} \delta_{right}^{m} e^{t \Delta_d}f}_{p,q},\quad t>0.$$

         \item If $m\in\NN,$ there is $C>0$ such that $$\norm{t^{\beta+\frac{1}{2}} \partial_t^l \delta_{right}^{m} e^{t \Delta_d}f}_{p,q}\leq 
                 C \norm{t^{\beta} \partial_t^{l} \delta_{right}^{m-1} e^{t \Delta_d}f}_{p,q},\quad t>0.$$
         
         \end{itemize}

        \item[(ii)] Suppose $f$ satisfies $\sum_{j \in \ZZ} \frac{\abs{f(j)}}{1+\abs{j}^2} < \infty.$ 

        \begin{itemize}
        
         \item If $l\in\NN,$ there is $C>0$ such that $$\norm{y^{\beta+1} \partial_y^l \delta_{right}^{m} e^{-y\sqrt{-\Delta_d}}f}_{p,q}  \leq 
                 C \norm{y^{\beta} \partial_y^{l-1} \delta_{right}^{m} e^{-y\sqrt{-\Delta_d}}f}_{p,q},\quad y>0.$$

         \item If $m\in\NN,$ there is $C>0$ such that $$\norm{y^{\beta+1} \partial_y^l \delta_{right}^{m} e^{-y\sqrt{-\Delta_d}}f}_{p,q} \leq 
                 C \norm{y^{\beta} \partial_y^{l} \delta_{right}^{m-1} e^{-y\sqrt{-\Delta_d}}f}_{p,q},\quad y>0.$$
         
         \end{itemize}

    \end{itemize}  
\end{lemma}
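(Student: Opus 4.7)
The plan is to exploit the semigroup identities $G(t)=G(t/2)\ast G(t/2)$ and $P(y)=P(y/2)\ast P(y/2)$ to transfer one derivative from the semigroup onto the convolution kernel at half the parameter, reducing matters via Young's convolution inequality to $\ell^1$-bounds on the differentiated heat and Poisson kernels. Since $\partial_t=\Delta_d$ on the heat semigroup, and since $\Delta_d$, $\delta_{right}$ and convolution all commute, applying $\Delta_d$ to a single factor of $G(t)=G(t/2)\ast G(t/2)$ yields, for $l\in\NN$,
\begin{equation*}
\partial_t^l\delta_{right}^{m}e^{t\Delta_d}f(n)
=\sum_{j\in\ZZ}\partial_\tau G(\tau,j)|_{\tau=t/2}\,\bigl[\partial_\tau^{l-1}\delta_{right}^{m}e^{\tau\Delta_d}f(n-j)\bigr]_{\tau=t/2},
\end{equation*}
and the analogous splitting $\delta_{right}^{m}G(t)=\delta_{right}G(t/2)\ast\delta_{right}^{m-1}G(t/2)$ gives, for $m\in\NN$,
\begin{equation*}
\partial_t^l\delta_{right}^{m}e^{t\Delta_d}f(n)
=\sum_{j\in\ZZ}\delta_{right}G(t/2,j)\,\bigl[\partial_\tau^{l}\delta_{right}^{m-1}e^{\tau\Delta_d}f(n-j)\bigr]_{\tau=t/2}.
\end{equation*}
\Cref{remark:some_semigroup_properties_work_fine} guarantees that these representations are well-defined under the hypothesis on $f$, and the Poisson case (ii) is handled by entirely parallel identities with $P$ in place of $G$ (using $\partial_y=-\sqrt{-\Delta_d}$ on the Poisson semigroup).

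The analytic heart of the proof consists of the kernel $\ell^1$-bounds
\begin{equation*}
\|\partial_\tau G(\tau,\cdot)\|_1\leq\frac{C}{1+\tau},\qquad\|\delta_{right}G(\tau,\cdot)\|_1\leq\frac{C}{1+\sqrt{\tau}}\qquad(\tau>0),
\end{equation*}
together with the Poisson analogues $\|\partial_\sigma P(\sigma,\cdot)\|_1+\|\delta_{right}P(\sigma,\cdot)\|_1\leq C/(1+\sigma)$. Writing $\partial_\tau G(\tau,n)=\delta_{right}^{2}G(\tau,n-1)$, the first heat bound follows from \Cref{lemma:inequality_heat_kernel} by splitting the sum over $n$ at $|n|\simeq\sqrt{\tau}$ and choosing $\beta=-3/2$ in the bulk and $\beta=0$ in the tail, matched with the trivial bound $\|\Delta_dG(\tau)\|_1\leq 4$ valid for $\tau\in(0,1)$; the second bound is obtained in the same way with $l=1$. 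The Poisson bounds then follow by inserting these heat estimates into the subordination formula and evaluating the resulting one-dimensional integrals.

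Young's inequality applied to the representations above produces, pointwise in $t$,
\begin{equation*}
\|\partial_t^l\delta_{right}^{m}e^{t\Delta_d}f\|_p\leq\frac{C}{1+t}\,\bigl\|\bigl[\partial_\tau^{l-1}\delta_{right}^{m}e^{\tau\Delta_d}f\bigr]_{\tau=t/2}\bigr\|_p,
\end{equation*}
for the $\partial_t$-removal case, with the analogous bound involving $1/(1+\sqrt{t})$ in the $\delta_{right}$-removal case. Multiplying by $t^{\beta+1}$ (respectively $t^{\beta+1/2}$) and invoking the elementary inequalities $t^{\beta+1}/(1+t)\leq Ct^{\beta}$ and $t^{\beta+1/2}/(1+\sqrt{t})\leq Ct^{\beta}$, valid uniformly for $t>0$, transfers the extra weight onto the right-hand side as $t^{\beta}$. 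Taking the $L^q((0,\infty),dt/t)$-norm (or the essential supremum when $q=\infty$), and applying the scaling $s=t/2$, which leaves $dt/t$ invariant and produces only the harmless factor $2^{\beta}$, yields the two inequalities in (i); the Poisson case (ii) proceeds identically with $P$ in place of $G$. The principal obstacle is the sharp kernel bound $\|\partial_\tau G(\tau,\cdot)\|_1\leq C/(1+\tau)$: no single choice of $\beta$ in \Cref{lemma:inequality_heat_kernel} simultaneously yields both enough $\tau$-decay and enough spatial summability, so the two-region argument combined with a uniform small-$\tau$ bound is the essential new step.
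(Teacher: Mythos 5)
Your proposal is correct and is essentially the paper's own argument: the paper simply defers to Taibleson's Lemmata 4\,a), b) and 4$^*$\,a), b), whose mechanism is exactly your semigroup splitting at half the parameter, Young's inequality, and the $\ell^1$ bounds $\norm{\partial_\tau G(\tau,\cdot)}_1\leq C\min\{1,\tau^{-1}\}$, $\norm{\delta_{right}G(\tau,\cdot)}_1\leq C\min\{1,\tau^{-1/2}\}$ and their Poisson analogues, which the paper imports from its predecessor \cite{AbadiasDeLeonContreras2022} and you rederive from \Cref{lemma:inequality_heat_kernel}. The only cosmetic difference is that you prove the kernel estimates in place rather than citing them.
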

\begin{proof}
    Again, the proof of this result runs parallel to the one of \cite[Lemmata 4  a), b) and $4^*$  a), b)]{Taibleson1964}. In this case, we have used \cite[Lemmata 2.6, 2.9 and 2.11, and Remarks 2.7 and 2.10]{AbadiasDeLeonContreras2022}.
\end{proof}

\begin{remark}\label{remark:amount_of_derivatives_on_t}
From Lemmata \ref{lemma:inequality_add_derivative} and \ref{lemma:inequality_remove_derivative} we deduce that if $f\in \Lambda^{\alpha,p,q}_H$ for some $\alpha>0,$ $1\le p,q\le \infty$ and $k,l$ are natural numbers such that $k,l\ge[\alpha/2]+1$, then    $  \norm{t^{k-\frac{\alpha}{2}} \partial_t^k e^{t \Delta_d}f }_{p,q}<\infty$ if, and only if,
                $ \norm{t^{l-\frac{\alpha}{2}} \partial_t^l e^{t \Delta_d}f }_{p,q} < \infty$. {Analogously, if $k,l$ are natural numbers such that $k,l\ge[\alpha]+1$, then    $\norm{y^{k-\alpha} \partial_y^k e^{-y \sqrt{-\Delta_d}}f }_{p,q}<\infty$ if, and only if,
                $ \norm{y^{l-\alpha} \partial_y^l e^{-y \sqrt{-\Delta_d}}f }_{p,q} < \infty$.}\end{remark}
\begin{lemma}
    \label{lemma:inequality_interchange_derivative}
    Let $\beta > 0 $, $1\leq p,q \leq \infty$ and $f:\ZZ \to \RR$.
    \begin{itemize}
        \item[(i)] Suppose that $\frac{f}{1+\abs{\cdot}^\alpha} \in \ell^q(\ZZ, \mu)$ for some $\alpha > 0,$ and  $m,l \in \NN_0$ such that $\frac{m}{2}+l>\frac{\alpha}{2}.$

\begin{itemize}
        
         \item If $l\in\NN,$ there is $C>0$ such that $$\norm{t^{\beta+\frac{1}{2}} \partial_t^l \delta_{right}^m e^{t\Delta_d}f}_{p,q}  \leq 
                    C
                    \norm{t^{\beta} \partial_t^{l-1} \delta_{right}^{m+1} e^{t\Delta_d}f}_{p,q},\quad t>0.$$

         \item If $m\in\NN,$ there is $C>0$ such that $$\norm{t^{\beta} \partial_t^l \delta_{right}^m e^{t\Delta_d}f}_{p,q} \leq 
                    C
                    \norm{t^{\beta+\frac{1}{2}} \partial_t^{l+1}\delta_{right}^{m-1} e^{t\Delta_d}f}_{p,q},\quad t>0.$$
         
         \end{itemize}

        \item[(ii)] Suppose that $f$ satisfies $\sum_{j \in \ZZ} \frac{\abs{f(j)}}{1+\abs{j}^2} < \infty,$ and $m,l \in \NN_0.$
        \begin{itemize}
        
         \item If $l\in\NN,$ there is $C>0$ such that $$\norm{y^{\beta} \partial_y^l \delta_{right}^m e^{-y\sqrt{-\Delta_d}}f}_{p,q}  \leq 
                C
                \norm{y^{\beta} \partial_y^{l-1} \delta_{right}^{m+1} e^{-y\sqrt{-\Delta_d}}f}_{p,q},\quad y>0.$$

         \item If $m\in\NN,$ there is $C>0$ such that $$\norm{y^{\beta}\partial_y^l \delta_{right}^m e^{-y\sqrt{-\Delta_d}}f}_{p,q}  \leq 
                    C
                \norm{y^{\beta} \partial_y^{l+1}\delta_{right}^{m-1} e^{-y\sqrt{-\Delta_d}}f}_{p,q},\quad y>0.$$
         
         \end{itemize}

    \end{itemize}
\end{lemma}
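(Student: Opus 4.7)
The statement splits into four subcases, two per part. For the cases $m \in \NN$ (the ``easy'' ones), my plan is to compose the two preceding lemmata directly. For part (i), \Cref{lemma:inequality_add_derivative}(i) gives
\[
\norm{t^\beta \partial_t^l \delta_{right}^m e^{t\Delta_d}f}_{p,q}
\le \tfrac{1}{\beta}\norm{t^{\beta+1}\partial_t^{l+1}\delta_{right}^m e^{t\Delta_d}f}_{p,q},
\]
and then the $m$-reduction in \Cref{lemma:inequality_remove_derivative}(i), applied with $\beta+1/2$ in place of $\beta$, yields
\[
\norm{t^{\beta+1}\partial_t^{l+1}\delta_{right}^m e^{t\Delta_d}f}_{p,q}
\le C\norm{t^{\beta+1/2}\partial_t^{l+1}\delta_{right}^{m-1}e^{t\Delta_d}f}_{p,q},
\]
which is the desired estimate. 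The case $m \in \NN$ of (ii) is entirely analogous, with $y$ replacing $t$ and without the $1/2$-power, since the Poisson versions of those lemmata each cost one full power of $y$ rather than $y^{1/2}$.

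For the ``hard'' cases $l \in \NN$ of part (i), the key algebraic input is the identity $\partial_t = \Delta_d = -\delta_{right}\delta_{left}$, which lets me write
\[
\partial_t^l \delta_{right}^m e^{t\Delta_d}f = -\delta_{left}\,v(t),
\qquad v(t):=\partial_t^{l-1}\delta_{right}^{m+1}e^{t\Delta_d}f.
\]
Since $v$ is itself a heat-evolved sequence, $v(t)=e^{(t/2)\Delta_d}v(t/2)$, and Young's convolution inequality together with the sharp $\ell^1$ gradient estimate $\norm{\delta_{left}G(t/2,\cdot)}_1 \le C t^{-1/2}$ produces
\[
\norm{\partial_t^l \delta_{right}^m e^{t\Delta_d}f}_p = \norm{\delta_{left}v(t)}_p \le C t^{-1/2}\norm{v(t/2)}_p.
\]
Multiplying by $t^{\beta+1/2}$, taking $L^q(dt/t)$-norms and rescaling $t\mapsto 2t$ (which preserves $dt/t$) yields the claim. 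For the $l\in\NN$ case of part (ii) the same strategy applies using $\partial_y^2 = \Delta_d$, which reduces the index $l$ by two at once; one further application of \Cref{lemma:inequality_add_derivative}(ii) then recovers the target index $l-1$. The borderline case $l=1$ is handled via the subordination formula for $e^{-y\sqrt{-\Delta_d}}$, reducing it to the already-proved heat estimate.

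I expect the main technical obstacle to be the sharp gradient estimate $\norm{\delta_{left}G(t/2,\cdot)}_1 \le C t^{-1/2}$ (and its Poisson analog), which does not follow directly from \Cref{lemma:inequality_heat_kernel}: summing the pointwise bound there in $n$ yields only the weaker power $t^{-1/2+\varepsilon}$. The sharp power is obtained from the closed-form expression $\delta_{right}G(t,n) = e^{-2t}\bigl(I_n(2t) - I_{n+1}(2t)\bigr)$ combined with standard Bessel-function asymptotics and the moment formula~\eqref{pol}; the Poisson counterpart then follows via the subordination formula.
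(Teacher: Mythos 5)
Your overall strategy coincides with the paper's: the authors dispatch this lemma in one line as a combination of \Cref{lemma:inequality_add_derivative} and \Cref{lemma:inequality_remove_derivative} together with the interchange trick from the proof of Taibleson's Theorem 1, which is exactly the pair of mechanisms you describe. The two $m\in\NN$ cases as you chain them are correct, and your treatment of the $l\in\NN$ case of (i) --- writing $\partial_t^l\delta_{right}^m e^{t\Delta_d}f=-\delta_{left}\,\partial_t^{l-1}\delta_{right}^{m+1}e^{t\Delta_d}f$, splitting the semigroup at $t/2$ and invoking $\norm{\delta_{left}G(t/2,\cdot)}_1\le Ct^{-1/2}$ --- is the standard argument. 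You are also right that this $\ell^1$ gradient bound does not follow from \Cref{lemma:inequality_heat_kernel} by summation (that route loses an $\varepsilon$ in the power of $t$) and must be taken from the kernel estimates of \cite{AbadiasDeLeonContreras2022}, which is precisely what the paper cites in the proof of \Cref{lemma:inequality_remove_derivative}.

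There is, however, a bookkeeping gap in your $l\in\NN$ case of (ii). Using $\partial_y^2=\Delta_d=-\delta_{left}\delta_{right}$ turns the index pair $(l,m)$ into $(l-2,m+2)$ at weight $y^{\beta}$ (the $\delta_{left}$ being absorbed by translation invariance of $\norm{\cdot}_p$), and the single application of \Cref{lemma:inequality_add_derivative}(ii) you propose then lands at $(l-1,m+2)$ with weight $y^{\beta+1}$ --- not at the target $(l-1,m+1)$ with weight $y^{\beta}$. You still owe one application of the $m$-bullet of \Cref{lemma:inequality_remove_derivative}(ii), which sheds the surplus $\delta_{right}$ and the surplus power of $y$ simultaneously and closes the estimate. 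Once this is inserted, the three-step chain (add a $\partial_y$, convert $\partial_y^2$ into $\delta_{right}^2$, remove a $\delta_{right}$) works uniformly for every $l\ge 1$, so the separate subordination argument you sketch for the borderline case $l=1$ is unnecessary.
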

\begin{proof}
    This result follows directly by using Lemmata \ref{lemma:inequality_remove_derivative} and  \ref{lemma:inequality_add_derivative} and the ideas presented in the 
    proof of \cite[Theorem 1]{Taibleson1964}.
\end{proof}

The last lemma of this section states an inequality involving both, heat and Poisson semigroups. The proof  follows similar steps than the ones appearing in \cite[Theorem 4.1]{DeLeonContrerasTorrea2020} and \cite[Theorem 5.6]{DeLeonContrerasTorrea2021}.

\begin{lemma}
    \label{lemma:inequalities_integral}
    Let $\alpha>0$, $1 \leq p \leq \infty$ and $f:\ZZ \to \RR$ such that $\frac{f}{1+\abs{\cdot}^\alpha} \in \ell^\infty(\ZZ)$ and
    $\sum_{n\in\ZZ} \frac{\abs{f(n)}}{1+n^2}<\infty$. Then, for every  $k \in \NN$,
    \[
        \norm{\partial_y^{2k} e^{-y\sqrt{-\Delta_d}}f}_p \leq  \int_0^\infty 
                                                                \frac{1}{2 \pi}
                                                                \frac{y e^{ \frac{-y^2}{4t}}}{t^{\frac{3}{2}}}
                                                                \norm{\partial_t^k e^{t\Delta_d}f}_p
                                                            \, dt.  
    \]
\end{lemma}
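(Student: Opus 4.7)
My plan is to exploit the fact that the Poisson semigroup solves $\partial_y^2 v = -\Delta_d v$, so that iterating yields, pointwise in $n\in\ZZ$ and for every $k\in\NN$,
\[
\partial_y^{2k} e^{-y\sqrt{-\Delta_d}}f(n) = (-\Delta_d)^k e^{-y\sqrt{-\Delta_d}}f(n).
\]
The hypothesis $\sum_n |f(n)|/(1+n^2)<\infty$ guarantees that $e^{-y\sqrt{-\Delta_d}}f$ and all its $y$-derivatives are well-defined, as already used in \cite[Lemma 2.13]{AbadiasDeLeonContreras2022}.

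Starting from the subordination representation
\[
e^{-y\sqrt{-\Delta_d}}f(n) = \frac{y}{2\sqrt{\pi}}\int_0^\infty \frac{e^{-y^2/(4t)}}{t^{3/2}}\, e^{t\Delta_d}f(n)\, dt,
\]
I would apply $(-\Delta_d)^k$ to both sides and justify interchanging the finite difference operator with the integral by Fubini, using the pointwise bounds for $\delta_{right}^{m} G(t,\cdot)$ provided by \Cref{lemma:inequality_heat_kernel} together with the size condition $f/(1+|\cdot|^\alpha)\in\ell^\infty(\ZZ)$ and the fast decay of the subordinator density $y e^{-y^2/(4t)}/t^{3/2}$ as $t\to 0^+$. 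Next I would invoke the heat-equation identity
\[
(-\Delta_d)^k e^{t\Delta_d}f(n) = (-1)^k \partial_t^k e^{t\Delta_d}f(n),
\]
which converts the spatial operator into the desired $t$-derivative. Combining these steps gives
\[
\partial_y^{2k} e^{-y\sqrt{-\Delta_d}}f(n) = \frac{(-1)^k y}{2\sqrt{\pi}}\int_0^\infty \frac{e^{-y^2/(4t)}}{t^{3/2}}\, \partial_t^k e^{t\Delta_d}f(n)\, dt,
\]
and the estimate follows at once by taking absolute values inside and applying Minkowski's integral inequality for $\|\cdot\|_p$.

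The main obstacle, as in the analogous continuous statements \cite[Theorem 4.1]{DeLeonContrerasTorrea2020} and \cite[Theorem 5.6]{DeLeonContrerasTorrea2021}, is the rigorous justification of the two interchanges: passing $(-\Delta_d)^k$ inside the subordination integral, and identifying $\partial_y^{2k}$ acting on the Poisson semigroup with $(-\Delta_d)^k$ acting on it. Both rely on absolute convergence provided by the size assumptions on $f$, by the kernel bounds of \Cref{lemma:inequality_heat_kernel}, and by the integrability of the subordination density. Once these are established the remaining Minkowski step is routine.
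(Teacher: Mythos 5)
Your outline is essentially the argument the paper intends: the paper gives no written proof of \Cref{lemma:inequalities_integral}, deferring to the continuous analogues in \cite{DeLeonContrerasTorrea2020,DeLeonContrerasTorrea2021}, and those proofs run exactly along your lines (subordination, conversion of $y$-derivatives into $t$-derivatives, Minkowski). Two remarks. First, in this discrete setting $e^{-y\sqrt{-\Delta_d}}$ is \emph{defined} by the subordination formula, so the identity $\partial_y^{2k}e^{-y\sqrt{-\Delta_d}}f=(-\Delta_d)^k e^{-y\sqrt{-\Delta_d}}f$ that you take as your starting point is not free: proving it already requires the same differentiation-under-the-integral that you postpone to the end, so your plan is slightly circular as written. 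The self-contained route is to differentiate the subordination kernel $g(y,t)=\frac{y}{2\sqrt{\pi}}\,t^{-3/2}e^{-y^2/(4t)}$ directly, use that $g$ solves the one-dimensional heat equation $\partial_t g=\partial_y^2 g$ so that $\partial_y^{2k}g=\partial_t^k g$, and integrate by parts $k$ times in $t$; the boundary terms vanish because $g$ and its $t$-derivatives vanish to infinite order as $t\to0^+$, while, by \Cref{lemma:inequality_heat_kernel} applied to $\delta_{right}^{2m}G$ with $\beta=\frac12-m$ together with the hypothesis $\sum_{n}\abs{f(n)}/(1+n^2)<\infty$, one gets $\abs{\partial_t^m e^{t\Delta_d}f(n)}\le C_n\, t^{1/2-m}$ for $t\ge1$, so each boundary product decays like $t^{-k}$ at infinity. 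This is equivalent to justifying your two interchanges, but it makes the needed decay explicit and uses only the stated hypotheses. Second, a discrepancy in constants: the subordination formula produces the factor $\frac{1}{2\sqrt{\pi}}$, not the $\frac{1}{2\pi}$ appearing in the statement; since $\frac{1}{2\sqrt{\pi}}>\frac{1}{2\pi}$, your argument proves the inequality only with the larger constant. This is almost certainly a typo in the statement and is immaterial where the lemma is used (\Cref{prop:Lambda_H_implies_Lambda_P} inserts a generic constant $C_\alpha$ anyway), but you should not claim the literal constant $\frac{1}{2\pi}$.
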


\section{Characterization via semigroups of discrete Besov spaces}\label{Characterization}
In the following we prove the characterization of $C^{\alpha,p,q}(\ZZ)$   by using the spaces
$\Lambda_H^{\alpha,p,q}$ and $\Lambda_P^{\alpha,p,q}$.

\subsection{Case \texorpdfstring{$0 < \alpha < 1$}{0 < α < 1}}

\begin{proposition}
    \label{prop:C_implies_Lambda_H_case_0_1}
    Let $0 < \alpha < 1$ and $1 \leq p,q \leq \infty.$ If $f \in C^{\alpha,p,q}(\ZZ)$ then $f \in \Lambda_H^{\alpha,p,q}$.
\end{proposition}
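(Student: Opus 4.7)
Since $0<\alpha<1$ we have $k=[\alpha/2]+1=1$, so the goal reduces to verifying (a) the size condition $\frac{f}{1+\abs{\cdot}^\alpha}\in \ell^q(\ZZ,\mu)$ and (b) the seminorm condition $\norm{t^{1-\alpha/2}\partial_t e^{t\Delta_d}f}_{p,q}<\infty$. Part (a) is immediate from Lemma \ref{lemma:C_space_bound_growth}. All the real work is in (b); by Remark \ref{remark:some_semigroup_properties_work_fine}, the semigroup and its $t$-derivative are well-defined.

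The starting point is the cancellation identity $\sum_{j\in\ZZ}\partial_t G(t,j)=\partial_t(\sum_j G(t,j))=0$, which allows us to write
\[
\partial_t e^{t\Delta_d}f(n)=\sum_{j\neq 0}\partial_t G(t,j)\bigl(f(n-j)-f(n)\bigr).
\]
Applying Minkowski in $\ell^p$ and using translation invariance,
\[
\norm{\partial_t e^{t\Delta_d}f}_p\leq \sum_{j\neq 0} a_j(t)\,b_j, \qquad a_j(t):=|\partial_t G(t,j)|\,|j|^\alpha,\quad b_j:=\tfrac{\norm{f(\cdot-j)-f(\cdot)}_p}{|j|^\alpha}.
\]
The assumption $f\in C^{\alpha,p,q}(\ZZ)$ says precisely $(b_j)_{j\neq 0}\in \ell^q(\ZZ,|j|^{-1})$ (or $\ell^\infty$), so it suffices to control the matrix $a_j(t)$ through two kernel estimates.

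\textbf{Kernel estimates.} Note $\partial_t G(t,n)=\Delta_d G(t,n)=\delta_{right}^2 G(t,n-1)$, so Lemma \ref{lemma:inequality_heat_kernel} applies with $l=2$. I plan to establish:
\begin{itemize}
\item[(K1)] \emph{Uniform bound:} $\displaystyle t^{1-\alpha/2}\sum_{j\neq 0}a_j(t)\leq C$ for every $t>0$. For $t\geq 1$, split at $|j|=\sqrt t$: on $|j|\leq \sqrt t$ use $\beta=-3/2$ (yielding $|\partial_t G(t,j)|\leq C t^{-3/2}$); on $|j|>\sqrt t$ use any $\beta>\alpha/2-1$. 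Both pieces produce exactly $t^{\alpha/2-1}$. For $t\in(0,1)$ use $\beta=-1/2$, giving $\sum |j|^\alpha(1+|j|^2)^{-1}\cdot t^{-1/2}$, which after multiplication by $t^{1-\alpha/2}$ is bounded.
\item[(K2)] \emph{Integral bound:} $\displaystyle \int_0^\infty t^{-\alpha/2}a_j(t)\,dt\leq \frac{C}{|j|}$ for $|j|\geq 1$. Split $(0,\infty)$ as $(0,1)\cup(1,j^2)\cup(j^2,\infty)$. On the first two intervals, use $\beta=-1/2$, where the $t$-integral converges thanks to $\alpha<1$ and produces a factor $|j|^{-(2-\alpha)}$ resp.\ $|j|^{-(1+\alpha)}$; on $(j^2,\infty)$ use $\beta=-3/2$, giving $|j|^{-(1+\alpha)}$. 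Multiplied by $|j|^\alpha$, each piece is $\leq C/|j|$.
\end{itemize}

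\textbf{Assembly.} For $q=\infty$, (K1) yields directly
\[
t^{1-\alpha/2}\norm{\partial_t e^{t\Delta_d}f}_p\leq \Bigl(\sup_{j\neq 0} b_j\Bigr)\cdot t^{1-\alpha/2}\sum_{j\neq 0}a_j(t)\leq C\norm{f}_{C^{\alpha,p,\infty}}.
\]
For $1\leq q<\infty$, Hölder with exponents $q',q$ in the $j$-sum gives
\[
\Bigl(\sum_j a_j(t)b_j\Bigr)^q\leq \Bigl(\sum_j a_j(t)\Bigr)^{q/q'}\sum_j a_j(t)b_j^q.
\]
Multiplying by $t^{q(1-\alpha/2)}\,dt/t$, using (K1) on the first factor, and applying Fubini with (K2),
\[
\norm{t^{1-\alpha/2}\partial_t e^{t\Delta_d}f}_{p,q}^q\leq C\sum_{j\neq 0}b_j^q\int_0^\infty t^{-\alpha/2}|\partial_t G(t,j)|\,dt\cdot|j|^\alpha\leq C\sum_{j\neq 0}\frac{\norm{f(\cdot-j)-f(\cdot)}_p^q}{|j|^{q\alpha+1}},
\]
which is finite by hypothesis. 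The main obstacle is producing the two sharp kernel estimates (K1)–(K2) simultaneously: the uniform bound (K1) is saturated (both regimes give $t^{\alpha/2-1}$ with no margin), and it forces the choice of different $\beta$ on different scales in Lemma \ref{lemma:inequality_heat_kernel}. Everything else is a bookkeeping exercise in Minkowski/Hölder.
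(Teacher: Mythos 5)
Your proof is correct, but it follows a genuinely different route from the paper's. After the common starting point (the cancellation $\sum_j\partial_t G(t,j)=0$ plus Minkowski, reducing everything to controlling $\sum_{j\neq 0}a_j(t)b_j$), the paper proceeds by brute-force splitting: $\int_0^1+\int_1^\infty$ in $t$, a further split of the $j$-sum at $\sqrt t$ for large $t$, conversion of the sums into integrals of step functions, and then the continuous Hardy inequalities of the appendix (\Cref{Hardy_inequality}) together with a H\"older trick on the small-$t$ piece. You instead run a weighted Schur test: the row-sum bound (K1), $\norm{\abs{\cdot}^\alpha\partial_t G(t,\cdot)}_1\le Ct^{\alpha/2-1}$ (which the paper only invokes for $q=\infty$, quoting \cite{AbadiasDeLeonContreras2022}), plus the column-integral bound (K2), $\int_0^\infty t^{-\alpha/2}a_j(t)\,dt\le C/\abs{j}$, combined via H\"older in $j$ with exponents $(q',q)$ and Fubini. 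I checked both kernel bounds against \Cref{lemma:inequality_heat_kernel} with $l=2$ and the choices $\beta=-3/2,-1/2$ and $\beta>\alpha/2-1$: the exponents close exactly as you claim (the restriction $\alpha<1$ enters through the convergence of $\int_0^1 t^{-\alpha/2-1/2}\,dt$ and of $\sum_j\abs{j}^{\alpha-2}$), and the exponent bookkeeping in the assembly step, $q(1-\tfrac{\alpha}{2})-1+(\tfrac{\alpha}{2}-1)(q-1)=-\tfrac{\alpha}{2}$, is right. Your approach buys a uniform treatment of all $1\le q\le\infty$ (the $q=\infty$ case is just the degenerate instance of the same scheme) and avoids the Hardy inequalities entirely; the paper's approach is longer but reuses machinery (step-function reductions and \Cref{Hardy_inequality}) that it needs again in later propositions. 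One cosmetic remark: in (K2) the intermediate factors you quote for the three $t$-intervals are stated inconsistently as to whether they already absorb the weight $\abs{j}^\alpha$, but each piece does come out $\le C/\abs{j}$ once the integrals are actually computed, so this is a wording issue rather than a gap.
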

\begin{proof}
Let  $f \in C^{\alpha,p,q}(\ZZ)$.
    From \Cref{lemma:C_space_bound_growth} we have that  
    $\frac{f}{1+\abs{\cdot}^\alpha} \in \ell^q(\ZZ, \mu)$.
    Now we have to prove that $\norm{t^{1-\frac{\alpha}{2}} \partial_t e^{t\Delta_d}f}_{p,q}<\infty.$ For that aim,  we shall consider the $p$-norm of the derivative of the semigroup.  Note that $\partial_t G(t,j) = \Delta_d G(t,j) = \delta_{right}^2G(t,j-1)$, for $j\in\ZZ$, and $ \delta_{right}^2G(t,j-1) =  \delta_{right}^2 G(t,\abs{j}-1)$,
    for $j \leq -1$. Suppose that   $1 \leq p \leq \infty$ and
    $1 \leq q < \infty$.  If $1 \leq p < \infty$,  by Minkowski's integral inequality one gets
    
        \begin{eqnarray}\label{4.1}
           \nonumber \norm{\partial_t e^{t\Delta_d}f}_p  & =& \left(\sum_{n\in\ZZ} \abs{ \sum_{j\in\ZZ} \partial_t G(t,j) f(n\!-\!j)}^p \right)^\frac{1}{p}
                                                \leq\sum_{j\in\ZZ}\left(\sum_{n\in\ZZ}\abs{\partial_t G(t,j)(f(n-j)-f(n))}^p\right)^\frac{1}{p}\\
                                                 & =& \sum_{j\in\ZZ} \abs{\partial_t G(t,j)} \norm{f(\cdot)-f(\cdot+j)}_p
                                                  \\
                                                  \nonumber&=& 2 \sum_{j \geq 1} \abs{\delta_{right}^2 G(t,j-1)} \norm{f(\cdot)-f(\cdot+j)}_p \\
                 \nonumber                               & =& 2 \sum_{j \geq 0} \abs{\delta_{right}^2 G(t,j)} \Omega_p(j+1),
        \end{eqnarray}
    
    with $\Omega_p(j) = \norm{f(\cdot) - f(\cdot+j)}_p$. If $p=\infty$, we also have
    \[
        \begin{aligned}
            \norm{\partial_t e^{t\Delta_d}f}_\infty & = \sup_{n \in \ZZ} \abs{\sum_{j\in\ZZ} \partial_t G(t,j) f(n-j)}
                                                    \leq \sum_{j \in \ZZ} \abs{\partial_t G(t,j)} \norm{f(\cdot)-f(\cdot+j)}_\infty \\
                                                    & = 2 \sum_{j \geq 0} \abs{\delta_{right}^2 G(t,j)} \Omega_\infty(j+1),
        \end{aligned}
    \]
    with $\Omega_\infty(j) = \norm{f(\cdot) - f(\cdot+j)}_\infty$. 
   Then, we have that
    \[
        \begin{aligned}
            \norm{t^{1-\frac{\alpha}{2}} \partial_t e^{t\Delta_d}f}_{p,q}   &   \leq 
                                                                                \left(
                                                                                    \int_0^1
                                                                                        \abs{
                                                                                            2 t^{1-\frac{\alpha}{2}} 
                                                                                            \sum_{ j \geq 0}
                                                                                                \abs{\delta^2_{right} G(t, j)} \Omega_p(j+1)
                                                                                        }^q
                                                                                        \frac{dt}{t}
                                                                                \right)^{\frac{1}{q}} \\
                                                                            &   + 
                                                                                \left(
                                                                                    \int_1^\infty
                                                                                        \abs{
                                                                                            2 t^{1-\frac{\alpha}{2}} 
                                                                                            \sum_{j \geq 0}
                                                                                                \abs{\delta^2_{right} G(t, j)} \Omega_p(j+1)
                                                                                        }^q
                                                                                        \frac{dt}{t}
                                                                                \right)^{\frac{1}{q}} \\
                                                                            &   =: A + B.
        \end{aligned}    
    \]
    By using \Cref{lemma:inequality_heat_kernel} with $\beta = -\frac{1}{2}$ we get that
    $\abs{\delta^2_{right} G(t,j)} \leq \dfrac{C}{(1+j^2)t^{\frac{1}{2}}}$, $j \geq 0,\,t>0$. Therefore,
    \[
        \begin{aligned}
            A   & \leq C \left(
                                \int_0^1
                                    \abs{
                                        t^{\frac{1}{2}-\frac{\alpha}{2}}
                                        \sum_{j \geq 1}
                                            \frac{\Omega_p(j)}{1+(j-1)^2}
                                    }^q
                                    \frac{dt}{t}
                            \right)^{\frac{1}{q}}.
        \end{aligned}    
    \]
    By taking into account that $\frac{1}{1+(j-1)^2} \leq \frac{2}{j^2}$, $j \geq 1$, and Minkowski's integral inequality, we obtain that
    \[
        A   \leq  C \left(
                            \int_{0}^1
                                \abs{
                                    t^{\frac{1}{2}-\frac{\alpha}{2}}
                                    \sum_{j \geq 1}
                                        \frac{\Omega_p(j)}{j^2}
                                }^q
                                \frac{dt}{t}
                        \right)^{\frac{1}{q}}
            \leq  C \sum_{j \geq 1}
                        \frac{\Omega_p(j)}{j^2}
                        \left(
                            \int_{0}^1
                                \abs{
                                    t^{\frac{1}{2}-\frac{\alpha}{2}}
                                }^q
                                \frac{dt}{t}
                        \right)^{\frac{1}{q}}
            \leq  C \sum_{j \geq 1}
                        \frac{\Omega_p(j)}{j^2}. 
    \]
    Notice that $2 =
    \alpha+1/q+2-\alpha-1/q$, so we can use Hölder's inequality to get 
    \[
        A  \leq  C \biggl(\sum_{j\geq 1} \biggl\| \frac{f(\cdot+j)-f(\cdot) }{j^{\alpha}} \biggr\|_p^q\frac{1}{j}\biggr)^{1/q}<\infty.
    \]
    Regarding B, we split it into 2 different, by integrals using Minkowski's inequality as follows
    \[
        \begin{aligned}
            B   & \leq  \left(
                            \int_1^\infty
                                \abs{
                                    2 t^{1-\frac{\alpha}{2}} 
                                    \sum_{ 0 \leq j \leq \sqrt{t}}
                                        \abs{\delta^2_{right} G(t, j)} \Omega_p(j+1)
                                }^q
                                \frac{dt}{t}
                        \right)^{\frac{1}{q}} \\
                &   +
                        \left(
                            \int_1^\infty
                                \abs{
                                    2 t^{1-\frac{\alpha}{2}} 
                                    \sum_{j > \sqrt{t}}
                                        \abs{\delta^2_{right} G(t, j)} \Omega_p(j+1)
                                }^q
                                \frac{dt}{t}
                        \right)^{\frac{1}{q}} \\
                &   =: B1 + B2.
        \end{aligned}    
    \]

    To compute the bound for $B1$, we apply \Cref{lemma:inequality_heat_kernel} with $\beta = -\frac{3}{2}$ so that
    $\abs{\delta^2_{right} G(t,j)} \leq \dfrac{C}{t^{\frac{3}{2}}}$, for $j \geq 0$ and $t>0$, to get
    $$  B1 \leq C \left(\int_1^\infty
                                \abs{
                                    t^{-\frac{\alpha}{2}-\frac{1}{2}} 
                                    \sum_{ 0 \leq j \leq \sqrt{t}}
                                        \Omega_p(j+1)
                                }^q
                                \frac{dt}{t}
                        \right)^{\frac{1}{q}}.$$
 
    Now we define the function $g := \sum_{j=0}^\infty \Omega_p(j+1)\chi_{[j+1,j+2)} $. Then, we can write \newline
    $\displaystyle\sum_{0\leq j \leq \sqrt{t}} \Omega_p(j+1)=\int_0^{\left[\sqrt{t}\right]+2} g(x) \, dx$.
    By using Hardy's inequality (see \Cref{Hardy_inequality}) one obtains that
    \[
        \begin{aligned}
            B1   & \leq C   \left(
                                \int_1^\infty
                                    \abs{
                                        t^{-\frac{\alpha}{2}-\frac{1}{2}}
                                        \int_0^{\left[\sqrt{t}\right]+2}
                                            g(x) \, dx
                                    }^q
                                    \frac{dt}{t}
                            \right)^{\frac{1}{q}}
                  \leq C   \left(
                                \int_1^\infty
                                    \abs{
                                        t^{-\frac{\alpha}{2}-\frac{1}{2}}
                                            \int_0^{3\sqrt{t}}
                                                g(x) \, dx
                                    }^q
                                    \frac{dt}{t}
                            \right)^{\frac{1}{q}} \\
                & = C      \left( \int_{3}^\infty \left(\int_0^{u} g(x) \, dx\right)^q u^{-q(\alpha+1)-1} \, du \right)^{\frac{1}{q}}
                  \leq C   \left( \int_{0}^\infty (x g(x))^q x^{-q(\alpha+1)-1} \, dx \right)^{\frac{1}{q}} \\
                & \leq C \left(\sum_{j=1}^\infty \frac{\norm{f(\cdot)-f(\cdot+j)}_p^q}{j^{q\alpha+1}}\right)^{\frac{1}{q}}
                  <\infty.         
        \end{aligned}
    \]

    On the other hand, by using \Cref*{lemma:inequality_heat_kernel} with $\beta = 0$, we have that
    $$B2\leq C \left(
                                \int_1^\infty
                                    \abs{
                                        t^{1-\frac{\alpha}{2}}
                                        \sum_{j > \sqrt{t}}
                                            \frac{\Omega_p(j+1)}{j^3}
                                        }^q 
                                    \frac{dt}{t}
                            \right)^{\frac{1}{q}}.$$  
                            
    We define the function $h := \sum _{j=0}^\infty \chi_{[j+1,j+2)} \frac{\Omega_p(j+1)}{j^3}$, so that we can write 
    $\sum_{ j>\sqrt{t}} \frac{\Omega_p(j+1)}{j^3}=\int_{\left[\sqrt{t}\right]+2}^\infty h(x)\,dx$.
    By using Hardy's inequality (\Cref{Hardy_inequality}) we have that
    \[
        \begin{aligned}
            B & \leq C \left(
                            \int_1^\infty
                                \abs{
                                    t^{1-\frac{\alpha}{2}}
                                    \int_{\left[\sqrt{t}\right]+2}^\infty
                                        h(x) \, dx
                                }^q
                                \frac{dt}{t}
                        \right)^{\frac{1}{q}}
                \leq C \left(
                            \int_1^\infty
                                \abs{
                                    t^{1-\frac{\alpha}{2}}
                                    \int_{\sqrt{t}}^\infty h(x) \, dx
                                }^q
                                \frac{dt}{t}
                        \right)^{\frac{1}{q}} \\
              & =    C \left(\int_1^\infty \left(\int_u^{\infty} h(x) \, dx\right)^q u^{q(2-\alpha)-1} \, du \right)^{\frac{1}{q}} 
                \leq C \left(\int_{0}^\infty (x h(x))^q x^{q(2-\alpha)-1} \, dx \right)^{\frac{1}{q}} \\ 
              & \leq C \left(\sum_{j=1}^\infty \frac{\norm{f(\cdot)-f(\cdot+j)}_p^q}{j^{q\alpha+1}}\right)^{\frac{1}{q}}
                <\infty.  
        \end{aligned}    
    \]
    Therefore, we have proved that $f \in \Lambda_H^{\alpha,p,q}$, $1 \leq p \leq \infty$ and $1 \leq q < \infty$.

    Suppose now that $f \in C^{\alpha,p,\infty}(\ZZ)$, with $1\leq p\leq \infty$. From equation \eqref{4.1}, it follows that
    \[
        \begin{aligned}
            \norm{t^{1-\frac{\alpha}{2}} \partial_t e^{t\Delta_d}f}_{p,\infty}  & =     \sup_{t > 0}
                                            t^{1-\frac{\alpha}{2}}
                                                                                            \norm{\partial_t e^{t\Delta_d}f}_p
                                                                                  \leq  \sup_{t > 0}
                                                                                            t^{1-\frac{\alpha}{2}}
                                                                                            \sum_{j \in \ZZ}
                                                                                                \abs{\partial_t G(t,j)}
                                                                                                \norm{f(\cdot)-f(\cdot\!+\!j)}_p \\
                                                                                & \leq \biggl(\sup_{j\neq 0}\frac{ \norm{f(\cdot)-f(\cdot\!+\!j)}_p}{|j|^{\alpha}}\biggl)
                                                                                        \sup_{t > 0}
                                                                                            t^{1-\frac{\alpha}{2}}
                                                                                            \norm{\partial_t G(t,\cdot) \abs{\cdot}^\alpha}_1.  
        \end{aligned}    
    \]
    By using the fact that $\norm{\partial_t G(t,\cdot) \abs{\cdot}^\alpha}_1 \leq C t^{\frac{\alpha}{2}-1}$ 
    (see \cite[Proof of Theorem 3.3]{AbadiasDeLeonContreras2022}) we conclude that
    $\norm{t^{1-\frac{\alpha}{2}} \partial_t e^{t\Delta_d}f}_{p,\infty}<\infty$, so $f \in \Lambda_H^{\alpha,p,\infty}$.
 
\end{proof}

\begin{proposition}
    \label{prop:Lambda_H_implies_Lambda_P}
    Let $\alpha > 0$ and $f: \ZZ \to \RR$ such that $\sum_{j\in\ZZ} \frac{\abs{f(j)}}{1+\abs{j}^2} < \infty$.
    If $f \in \Lambda_H^{\alpha,p,q}$, $1 \leq p,q \leq \infty$, then $f \in \Lambda_P^{\alpha,p,q}$.
\end{proposition}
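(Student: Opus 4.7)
The size condition appearing in the definition of $\Lambda_P^{\alpha,p,q}$ is literally the hypothesis $\sum_{j}|f(j)|/(1+j^2)<\infty$, so the whole task is to produce an integer $l\ge[\alpha]+1$ for which the mixed norm $\|y^{l-\alpha}\partial_y^l e^{-y\sqrt{-\Delta_d}}f\|_{p,q}$ is finite; by \Cref{remark:amount_of_derivatives_on_t}, any such $l$ will do. The plan is to choose $l=2k$ with $k=[\alpha/2]+1$ (note $2k\ge[\alpha]+1$), for which \Cref{lemma:inequalities_integral} applies and the heat-side quantity $\|t^{k-\alpha/2}\partial_t^k e^{t\Delta_d}f\|_{p,q}$ is already finite since $f\in\Lambda_H^{\alpha,p,q}$ (again invoking \Cref{remark:amount_of_derivatives_on_t} on the heat side).

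With that choice, \Cref{lemma:inequalities_integral} gives
\[
\|\partial_y^{2k}e^{-y\sqrt{-\Delta_d}}f\|_p\le C\int_0^\infty \frac{y\,e^{-y^2/4t}}{t^{3/2}}\,\|\partial_t^k e^{t\Delta_d}f\|_p\,dt.
\]
The next step is a change of variable $s=y^2/(4t)$, which rewrites the subordination integral so that $y$ factors out with the correct power. Setting $\varphi(t):=t^{k-\alpha/2}\|\partial_t^k e^{t\Delta_d}f\|_p$, a direct computation yields
\[
y^{2k-\alpha}\,\|\partial_y^{2k}e^{-y\sqrt{-\Delta_d}}f\|_p\le C\int_0^\infty e^{-s} s^{k-\alpha/2-1/2}\,\varphi\!\left(\frac{y^2}{4s}\right)ds.
\]

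Then I would apply Minkowski's integral inequality in $L^q((0,\infty),dy/y)$ (or simply take the supremum when $q=\infty$) to move the $y$-norm inside the $s$-integral, obtaining
\[
\|y^{2k-\alpha}\partial_y^{2k}e^{-y\sqrt{-\Delta_d}}f\|_{p,q}\le C\int_0^\infty e^{-s} s^{k-\alpha/2-1/2}\left(\int_0^\infty \varphi\!\left(\frac{y^2}{4s}\right)^{\!q}\frac{dy}{y}\right)^{\!1/q}ds.
\]
The decisive observation is that the inner $dy/y$ integral is dilation invariant: the substitution $u=y^2/(4s)$ gives $dy/y=du/(2u)$, so the inner integral equals $\tfrac{1}{2}\|\varphi\|_q^q$, independently of $s$.

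Pulling this constant out, the remaining $s$-integral is the Gamma function $\Gamma(k-\alpha/2+1/2)$, finite because $k-\alpha/2-1/2>-1$ by our choice of $k$. Consequently,
\[
\|y^{2k-\alpha}\partial_y^{2k}e^{-y\sqrt{-\Delta_d}}f\|_{p,q}\le C\,\|t^{k-\alpha/2}\partial_t^k e^{t\Delta_d}f\|_{p,q}<\infty,
\]
which establishes $f\in\Lambda_P^{\alpha,p,q}$. The main subtlety is not a hard step but the bookkeeping: choosing an \emph{even} order of derivative so that \Cref{lemma:inequalities_integral} is applicable, and then spotting the change of variable $s=y^2/(4t)$ that both produces the correct factor $y^{2k-\alpha}$ and turns the Poisson kernel into a dilation-invariant weight in $s$, thereby decoupling $y$ from $s$ and letting Minkowski close the argument in one stroke.
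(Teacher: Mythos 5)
Your argument is correct, and it follows the same skeleton as the paper's proof: reduce via \Cref{remark:amount_of_derivatives_on_t} to an even order of differentiation $l=2k$ so that the subordination estimate of \Cref{lemma:inequalities_integral} applies (your check that $2k=2[\alpha/2]+2\ge[\alpha]+1$ is the right bookkeeping), and then transfer the heat mixed norm to the Poisson one. Where you diverge is in the final estimate. The paper splits the subordination integral at $t=y^2$, absorbs powers of $y/\sqrt{t}$ into the Gaussian factor, substitutes $y=\sqrt{s}$, and invokes Hardy's inequality (\Cref{Hardy_inequality}) on each piece. You instead substitute $s=y^2/(4t)$ inside the subordination integral, which turns the Poisson kernel into the fixed weight $e^{-s}s^{-1/2}\,ds$ and isolates the factor $y^{-(2k-\alpha)}$; Minkowski's integral inequality in $L^q((0,\infty),dy/y)$ together with the dilation invariance of $dy/y$ then decouples $y$ from $s$, and the remaining $s$-integral is $\Gamma(k-\alpha/2+1/2)$, finite since $k>\alpha/2$. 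Your route avoids Hardy's inequality altogether and yields the same bound
$\norm{y^{2k-\alpha}\partial_y^{2k}e^{-y\sqrt{-\Delta_d}}f}_{p,q}\le C\norm{t^{k-\alpha/2}\partial_t^{k}e^{t\Delta_d}f}_{p,q}$
in one stroke; it is arguably cleaner, at the cost of being specific to the exact homogeneity of the subordination kernel, whereas the paper's splitting-plus-Hardy technique is the one reused elsewhere (e.g.\ in the proofs of the regularity theorems), so the authors get to recycle it. One small point worth making explicit: to invoke \Cref{lemma:inequalities_integral} you need $\frac{f}{1+|\cdot|^{\alpha'}}\in\ell^\infty(\ZZ)$ for some $\alpha'>0$, which follows from the size condition in $\Lambda_H^{\alpha,p,q}$ with $\alpha'=\alpha+1/q$; this is routine but should be said.
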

\begin{proof}
Let $f \in \Lambda_H^{\alpha,p,q}$, $1 \leq p,q \leq \infty$. By using Lemmata \ref{lemma:inequality_add_derivative} and \ref{lemma:inequality_remove_derivative} (see also \Cref{remark:amount_of_derivatives_on_t}), it is enough to prove that $\norm{y^{l-\alpha}\partial_y^l e^{-y\sqrt{-\Delta_d}f}}_{p,q}<\infty$  for $l$ the least even number such that $l>[\alpha]+1$ and $l/2>[\alpha/2]+1.$

Let $l$ be the even number satisfying the conditions above and suppose that $1\leq p\leq\infty$ and $1 \leq q < \infty$. From \Cref{lemma:inequalities_integral} we have that
    \[
        \norm{\partial_y^{l} e^{-y\sqrt{-\Delta_d}}f}_p \leq  \int_0^\infty 
                                                                \frac{1}{2 \pi}
                                                                \frac{y e^{ \frac{-y^2}{4t}}}{t^{\frac{3}{2}}}
                                                                \norm{\partial_t^{l/2} e^{t\Delta_d}f}_p
                                                            \, dt,
    \]
    and
  
            $$\norm{y^{l-\alpha}\partial_y^l e^{-y\sqrt{-\Delta_d}}f}_{p,q} \leq  C_\alpha\left(
           \int_0^\infty\left(
                                                                                                y^{l-\alpha}
                                                                                                    \int_0^\infty
                                                                                                    \frac{1}{2\pi}
                                                                                                    \frac{y e^{ \frac{-y^2}{4t}}}{t^{\frac{3}{2}}}
                                                                                                    \norm{\partial_t^{l/2} e^{t\Delta_d}f}_p
                                                                                                    \, dt
                                                                                            \right)^q
                                                                                        \frac{dy}{y}
                                                                                    \right)^{\frac{1}{q}},
        $$
    where $C_\alpha$ is a positive constant depending on $\alpha$. 
   Notice that for every $\gamma\geq 0$ there is $C>0$ such that $\left(\frac{y}{t^{1/2}}\right)^{\gamma} e^{ \frac{-y^2}{4t}}\le C$, for every $t,y>0.$ So 
    \[
        \begin{aligned}
            \norm{y^{l-\alpha}\partial_y^le^{-y\sqrt{-\Delta_d}}f}_{p,q}    & \leq  C_\alpha
                                                                                    \left[
                                                                                        \left(
                                                                                            \int_0^\infty
                                                                                                \left(
                                                                                                    y^{l-2-\alpha}
                                                                                                        \int_0^{y^2}
                                                                                                        \norm{\partial_t^{l/2} e^{t\Delta_d}f}_p
                                                                                                        \, dt
                                                                                                \right)^q
                                                                                            \frac{dy}{y}
                                                                                        \right)^{\frac{1}{q}}
                                                                                    \right. \\
                                                                            & +     \left.
                                                                                           \left(
                                                                                                \int_0^\infty
                                                                                \left(y^{l-\alpha}\int_{y^2}^\infty\frac{\norm{\partial_t^{l/2}  e^{t\Delta_d}f       }_p
                                                                                                            }{
                                                                                                                t
                                                                                                            }
                                                                                                            \, dt
                                                                                                    \right)^q
                                                                                                \frac{dy}{y}
                                                                                            \right)^{\frac{1}{q}}
                                                                                    \right].
        \end{aligned}
    \]
    For both integrals we perform the change of variables ($y=\sqrt{s}$) and we use Hardy's inequality (\Cref{Hardy_inequality}), which
    yields to
    \[
        \norm{y^{l-\alpha}\partial_y^le^{-y\sqrt{-\Delta_d}}f}_{p,q} \leq C_\alpha \norm{t^{l/2-\frac{\alpha}{2}} \partial_t^{l/2} e^{t\Delta_d}f}_{p,q}<\infty.
    \]
    The case $q=\infty$ follows the same steps.
\end{proof}

\begin{proposition}
    \label{prop:Lambda_P_implies_C_case_0_1}
    Let $0 < \alpha < 1$, $1\leq p,q \leq \infty$ and $f \in \Lambda_P^{\alpha,p,q}$. Then, $f \in C^{\alpha,p,q}(\ZZ)$.
\end{proposition}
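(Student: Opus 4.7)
The plan is to write a representation formula for the difference $f(n+j)-f(n)$ via the Poisson semigroup and split the resulting integral according to the size of $y$ relative to $|j|$. First, since $\sum_n |f(n)|/(1+n^2)<\infty$ and $f\in\Lambda_P^{\alpha,p,q}$, one obtains $e^{-y\sqrt{-\Delta_d}}f(n)\to f(n)$ as $y\to 0^+$ and, using an analogue of \Cref{lemma:decay_heat_semigroup} for the subordinated semigroup together with \cite[Lemma 2.13]{AbadiasDeLeonContreras2022}, $\partial_y e^{-y\sqrt{-\Delta_d}}f(n)\to 0$ as $y\to\infty$. This produces the pointwise identity
\[
f(n+j)-f(n) = -\int_0^{\infty} \partial_y\bigl[ e^{-y\sqrt{-\Delta_d}}f(n+j) - e^{-y\sqrt{-\Delta_d}}f(n)\bigr]\, dy.
\]
I would split this integral at $y=|j|$ and estimate the $\ell^p$-norm as $\|f(\cdot+j)-f(\cdot)\|_p\le A(j)+B(j)$, where by the triangle inequality and translation invariance,
\[
A(j) := 2\int_0^{|j|} \|\partial_y e^{-y\sqrt{-\Delta_d}}f\|_p\, dy,
\]
while the horizontal telescoping $e^{-y\sqrt{-\Delta_d}}f(n+j)-e^{-y\sqrt{-\Delta_d}}f(n)=-\sum_{k=0}^{|j|-1}\delta_{right}e^{-y\sqrt{-\Delta_d}}f(n+k)$ valid for $j>0$ (symmetric for $j<0$) gives
\[
B(j) := |j|\int_{|j|}^\infty \|\delta_{right}\partial_y e^{-y\sqrt{-\Delta_d}}f\|_p\,dy.
\]

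For $1\leq q<\infty$, put $\phi(y)=\|\partial_y e^{-y\sqrt{-\Delta_d}}f\|_p$ and $\psi(y)=\|\delta_{right}\partial_y e^{-y\sqrt{-\Delta_d}}f\|_p$; both are nonincreasing in $y$ by the contractivity of the Poisson semigroup on $\ell^p$. This monotonicity enables the comparison $\phi(j)^q j^{-q\alpha-1}\le C\int_{j-1}^j \phi(x)^q x^{-q\alpha-1}\,dx$ (and similarly for $\psi$), reducing $\sum_{j\ne 0}(A(j)+B(j))^q|j|^{-q\alpha-1}$ to the two continuous expressions
\[
\int_0^\infty \Bigl(\int_0^x \phi(y)\,dy\Bigr)^q x^{-q\alpha-1}\,dx,\qquad \int_0^\infty \Bigl(\int_x^\infty \psi(y)\,dy\Bigr)^q x^{q(1-\alpha)-1}\,dx.
\]
I would then apply the direct and dual versions of Hardy's inequality (\Cref{Hardy_inequality}), both of which apply because $0<\alpha<1$ makes the exponents $q\alpha$ and $q(1-\alpha)$ strictly positive. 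This yields bounds by $C\|y^{1-\alpha}\phi\|^q_{L^q(dy/y)}$ and $C\|y^{2-\alpha}\psi\|^q_{L^q(dy/y)}$ respectively. The first equals $C\|y^{1-\alpha}\partial_y e^{-y\sqrt{-\Delta_d}}f\|_{p,q}^q$, finite by the $\Lambda_P^{\alpha,p,q}$ hypothesis; for the second, I would chain \Cref{lemma:inequality_interchange_derivative}(ii) and \Cref{lemma:inequality_remove_derivative}(ii) to write
\[
\|y^{2-\alpha}\delta_{right}\partial_y e^{-y\sqrt{-\Delta_d}}f\|_{p,q}\le C\|y^{2-\alpha}\partial_y^2 e^{-y\sqrt{-\Delta_d}}f\|_{p,q}\le C\|y^{1-\alpha}\partial_y e^{-y\sqrt{-\Delta_d}}f\|_{p,q},
\]
which is again finite.

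The case $q=\infty$ follows the same splitting but is simpler: the hypothesis yields $\phi(y)\le Cy^{\alpha-1}$ directly, so $A(j)\le C|j|^\alpha/\alpha$, and the same chain of semigroup lemmata in the sup-norm setting gives $\psi(y)\le Cy^{\alpha-2}$, so $B(j)\le C|j|^\alpha/(1-\alpha)$; dividing by $|j|^\alpha$ yields a uniform bound in $j$. I expect the main technical obstacle to be the careful bookkeeping of weights and derivative counts so that \Cref{lemma:inequality_interchange_derivative}(ii) and \Cref{lemma:inequality_remove_derivative}(ii) can be chained to reduce everything to the defining quantity $\|y^{1-\alpha}\partial_y e^{-y\sqrt{-\Delta_d}}f\|_{p,q}$; the discrete-to-continuous passage via monotonicity of $\phi,\psi$ and the applications of Hardy's inequality are routine once the exponents line up correctly.
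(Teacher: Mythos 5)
Your argument is correct and follows essentially the same route as the paper's: control $\norm{f(\cdot+j)-f(\cdot)}_p$ by a vertical piece $\int_0^{\cdot}\norm{\partial_y e^{-y\sqrt{-\Delta_d}}f}_p\,dy$ plus a telescoped horizontal piece involving $\delta_{right}$ of the Poisson semigroup, then apply Hardy's inequality (\Cref{Hardy_inequality}) and \Cref{lemma:inequality_interchange_derivative}(ii). The only cosmetic difference is that the paper evaluates the horizontal term at the single time $t$, giving $t\norm{\delta_{right}e^{-t\sqrt{-\Delta_d}}f}_p$ and needing the interchange lemma alone, whereas you integrate $\delta_{right}\partial_y$ over $[\abs{j},\infty)$ and pay one extra application of \Cref{lemma:inequality_remove_derivative}(ii); both reductions land on the same defining quantity $\norm{y^{1-\alpha}\partial_y e^{-y\sqrt{-\Delta_d}}f}_{p,q}$.
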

\begin{proof}
    Suppose that $1 \leq q < \infty$. Then, we can write
    \[
        \sum_{j\neq 0} \frac{\norm{f(\cdot+j)-f(\cdot)}_p^q}{\abs{j}^{\alpha q +1}} 
        = 2 \sum_{j=1}^\infty \frac{\norm{f(\cdot+j)-f(\cdot)}_p^q}{j^{\alpha q +1}} 
        = 2\int_1^\infty \frac{\norm{f(\cdot+[t])-f(\cdot)}_p^q}{[t]^{\alpha q +1}} \, dt.
    \]
    Since for $t\geq 1$ it holds that $[t]\geq \frac{t}{2}$, then
    \[
        \sum_{j\neq 0} \frac{\norm{f(\cdot+j)-f(\cdot)}_p^q}{\abs{j}^{\alpha q +1}} 
        \leq C_{\alpha,q} \int_0^\infty (t^{-\alpha}\omega(t,p))^q \frac{dt}{t},
    \]
    where $\displaystyle\omega(t,p) := \sup_{\substack{0 \leq j \leq t\\
    j\in\NN_0}} \norm{f(\cdot+j)-f(\cdot)}_p$.

    Moreover, for every $j \leq t $, we have that
    \[
        \begin{aligned}
            \norm{f(\cdot+j)-f(\cdot)}_p    & \leq  2 \norm{f - e^{-t\sqrt{-\Delta_d}}f}_p 
                                                    + \norm{e^{-t\sqrt{-\Delta_d}}f(\cdot)-e^{-t\sqrt{-\Delta_d}}f(\cdot+j)}_p \\
                                            & \leq  2 \int_0^t \norm{\partial_y e^{-y\sqrt{-\Delta_d}}f}_p \, dy 
                                                    + t \norm{\delta_{right}e^{-t\sqrt{-\Delta_d}}f}_p,
        \end{aligned}  
    \]
     where we have used the fact that $f$ satisfying $\sum_{j\in\ZZ}\frac{|f(j)|}{1+|j|^2}<\infty$ 
     implies that $\lim_{y\to 0}e^{-y\sqrt{-\Delta_d}}f(n)=f(n)$, for $n\in\ZZ$, (see \cite[Lemma 2.12.B]{AbadiasDeLeonContreras2022}) and  either Minkowski's integral inequality (when $1\leq p< \infty$) or a straightforward inequality (when $p=\infty$).
    Thus,
    \[
        \begin{aligned}
            \left(
                \int_0^\infty
                    \left(
                        t^{-\alpha}\omega(t,p)
                    \right)^q
                    \frac{dt}{t}
            \right)^{\frac{1}{q}}               & \leq 2\left(
                                                            \int_0^\infty
                                                                \left(
                                                                    t^{-\alpha}
                                                                    \int_0^t
                                                                        \norm{\partial_y e^{-y\sqrt{-\Delta_d}}f}_p
                                                                    \, dy
                                                                \right)^q
                                                                \frac{dt}{t}
                                                        \right)^{\frac{1}{q}} \\
                                                & +     \left(
                                                            \int_0^\infty
                                                                \left(
                                                                    t^{1-\alpha} \norm{\delta_{right} e^{-t\sqrt{-\Delta_d}}f}_p
                                                                \right)^q
                                                                \frac{dt}{t}
                                                        \right)^{\frac{1}{q}} \\
                                                & =: I + II.
        \end{aligned}
    \]
    On the one hand, by  using Hardy's inequality (\Cref{Hardy_inequality}), we obtain that
    \[
        I \leq C \left( \int_0^\infty \left(y^{1-\alpha} \norm{\partial_y e^{-y\sqrt{-\Delta_d}}f}_p \right)^q \frac{dy}{y}\right)^{\frac{1}{q}}<\infty.  
    \]
    On the other hand, by using \Cref{lemma:inequality_interchange_derivative}(ii),
    \[
        II 
        \leq
        C\norm{y^{1-\alpha} \partial_y e^{-t\sqrt{-\Delta_d}}f}_{p,q}<\infty. 
    \]
    Therefore, the result is proved for $1\leq q < \infty$.  The proof for $q = \infty$ follows similarly.
\end{proof}

\begin{theorem}
    \label{theorem:equal_spaces_0_1}
    Let $0 < \alpha <1$ and $1\leq p,q \leq \infty$. It holds that $ C^{\alpha,p,q}(\ZZ) = \Lambda_H^{\alpha,p,q} = \Lambda_P^{\alpha,p,q}$. 
\end{theorem}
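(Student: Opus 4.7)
The plan is to deduce Theorem \ref{theorem:equal_spaces_0_1} as a routine consequence of the three preceding propositions, which together form a cyclic chain of inclusions. Concretely, I would establish
\[
C^{\alpha,p,q}(\ZZ) \subseteq \Lambda_H^{\alpha,p,q} \subseteq \Lambda_P^{\alpha,p,q} \subseteq C^{\alpha,p,q}(\ZZ).
\]
The first inclusion is exactly the content of Proposition \ref{prop:C_implies_Lambda_H_case_0_1}, and the third is Proposition \ref{prop:Lambda_P_implies_C_case_0_1}, both applied verbatim. The only step requiring genuine verification is the middle inclusion, because Proposition \ref{prop:Lambda_H_implies_Lambda_P} is stated conditionally: it assumes the size condition $\sum_{j \in \ZZ} \abs{f(j)}/(1+\abs{j}^2) < \infty$. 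One must check that every $f \in \Lambda_H^{\alpha,p,q}$ with $0<\alpha<1$ automatically satisfies this summability.

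For that verification, I would use the built-in condition $f/(1+\abs{\cdot}^\alpha) \in \ell^q(\ZZ,\mu)$ present in the definition of $\Lambda_H^{\alpha,p,q}$. When $q=\infty$ the measure $\mu$ becomes irrelevant and $f/(1+\abs{\cdot}^\alpha) \in \ell^\infty(\ZZ)$ immediately yields
\[
\sum_{j \in \ZZ} \frac{\abs{f(j)}}{1+\abs{j}^2} \leq \norm{f/(1+\abs{\cdot}^\alpha)}_\infty \sum_{j \in \ZZ} \frac{1+\abs{j}^\alpha}{1+\abs{j}^2} < \infty,
\]
since $\alpha < 1 < 2$. For $1 \leq q < \infty$, I would apply Hölder's inequality with conjugate exponent $q'$ to split
\[
\sum_{j \in \ZZ} \frac{\abs{f(j)}}{1+\abs{j}^2} = \sum_{j \in \ZZ} \frac{\abs{f(j)}}{1+\abs{j}^\alpha}\cdot \frac{1+\abs{j}^\alpha}{1+\abs{j}^2}(1+\abs{j})^{-1/q}(1+\abs{j})^{1/q},
\]
bounding it by $\norm{f/(1+\abs{\cdot}^\alpha)}_{q,\mu}$ times the $q'$-series whose generic term decays like $\abs{j}^{(\alpha-1)q'-1}$; this series converges exactly because $\alpha < 1$.

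Once that summability is in place, Proposition \ref{prop:Lambda_H_implies_Lambda_P} delivers $f \in \Lambda_P^{\alpha,p,q}$ without further work, and the cyclic chain closes. I do not anticipate any real obstacle here: the whole argument is a bookkeeping exercise since all the semigroup and kernel estimates have been done in Lemmata \ref{lemma:inequality_heat_kernel}--\ref{lemma:inequalities_integral} and in the three preceding propositions. The only mildly delicate point is the Hölder computation above, and it is transparent once one notices that the range $0<\alpha<1$ is precisely what makes $(\alpha-1)q' < 0$.
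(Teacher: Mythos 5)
Your proposal is correct and follows essentially the same route as the paper: the chain $C^{\alpha,p,q}\subseteq\Lambda_H^{\alpha,p,q}\subseteq\Lambda_P^{\alpha,p,q}\subseteq C^{\alpha,p,q}$ with the only substantive step being the verification that $\frac{f}{1+\abs{\cdot}^\alpha}\in\ell^q(\ZZ,\mu)$ forces $\sum_{j}\frac{\abs{f(j)}}{1+\abs{j}^2}<\infty$, which the paper also does via H\"older's inequality (with the exponents packaged slightly differently, through $\gamma=\frac{\alpha q+1}{2q}$, and with $q=1$ treated by a direct bound rather than as the $q'=\infty$ endpoint). Your exponent count $(\alpha-1)q'-1<-1$ matches the paper's condition $2(1-\gamma)q'>1$, so the argument is sound as stated.
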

\begin{proof}
   From \Cref{prop:C_implies_Lambda_H_case_0_1} and    \Cref{prop:Lambda_P_implies_C_case_0_1}  we know that $ C^{\alpha,p,q}(\ZZ) \subset \Lambda_H^{\alpha,p,q}$ and $ \Lambda_P^{\alpha,p,q} \subset C^{\alpha,p,q}(\ZZ)$. It remains to prove that 
    $ \Lambda_H^{\alpha,p,q} \subset \Lambda_P^{\alpha,p,q}.$ 

    In virtue of \Cref{prop:Lambda_H_implies_Lambda_P}, it suffices to check that if  $\frac{f}{1+|\cdot|^\alpha} \in \ell^q(\ZZ, \mu)$ then 
    $\sum_{j\in\ZZ} \frac{\abs{f(j)}}{1+\abs{j}^2} < \infty$.  Indeed, if $\frac{f}{1+|\cdot|^\alpha} \in \ell^1(\ZZ, \mu)$  we have that
    \[
        \begin{aligned}
            \sum_{j\in \ZZ} \frac{\abs{f(j)}}{1+\abs{j}^2}  & =     \abs{f(0)} + \sum_{j\neq 0} \frac{\abs{f(j)}}{1+\abs{j}^\alpha}
                                                                    \frac{1}{\abs{j}} \left( \frac{(1+\abs{j}^\alpha)\abs{j}}{1+\abs{j}^2} \right) \\
                                                            & \leq  \abs{f(0)} + C \sum_{j \neq 0} \frac{\abs{f(j)}}{1+\abs{j}^\alpha}
                                                                    \frac{1}{\abs{j}} < \infty.      
        \end{aligned}
    \]
    Now suppose that ${\frac{f}{1+|\cdot|^\alpha}} \in \ell^q(\ZZ, \mu)$, with   $1 <q < \infty$. By taking $\gamma = \frac{\alpha q + 1}{2q} \in \left( \frac{1}{2},1 \right)$, we have that
    \[
        \begin{aligned}
            \sum_{j\in \ZZ} \frac{\abs{f(j)}}{1+\abs{j}^2}  & =     \sum_{j\in \ZZ} 
                                                                        \frac{\abs{f(j)}}{(1+\abs{j}^2)^\gamma}
                                                                        \frac{1}{(1+\abs{j}^2)^{1-\gamma}} \\
                                                            & \leq  \left(
                                                                        \sum_{j\in \ZZ} \frac{\abs{f(j)}^q}{(1+\abs{j}^2)^{\gamma q}}
                                                                    \right)^{\frac{1}{q}}
                                                                    \left(
                                                                        \sum_{j\in \ZZ} \frac{1}{(1+\abs{j}^2)^{(1-\gamma) q'}}
                                                                    \right)^{\frac{1}{q'}}
                                                              =: A \cdot B,   
        \end{aligned}
    \]
    where $q'$ is the conjugate exponent of $q$. Observe that $A$ is clearly finite and $B$ is also finite because
    $2(1-\gamma)q'>1$. Lastly, we know that if $\frac{f}{1+\abs{\cdot}^\alpha}\in \ell^\infty(\ZZ)$, then it
    follows that  $\sum_{j\in \ZZ} \frac{\abs{f(j)}}{1+\abs{j}^2}<\infty$. 
\end{proof}

\subsection{Case \texorpdfstring{$0 < \alpha < 2$}{0 < α < 2}}
\begin{proposition}
    \label{prop:Lambda_P_implies_C2_case_0_2}
    Let $0 < \alpha < 2$ and $1\leq p,q \leq \infty$. If $f \in \Lambda_P^{\alpha,p,q},$ then 
    \[
\sum_{j\neq 0} \biggl\| \frac{ f(\cdot-j)- 2f(\cdot) + f(\cdot+j) }{|j|^{\alpha}} \biggr\|_p^q\frac{1}{|j|}<\infty,\quad \text{if }1\leq q<\infty,
    \]
    and \[
\sup_{j\neq 0} \biggl\| \frac{ f(\cdot-j)- 2f(\cdot) + f(\cdot+j) }{|j|^{\alpha}} \biggr\|_p<\infty,\quad \text{if } q=\infty.
\]
\end{proposition}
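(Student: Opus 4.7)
The plan is to compare $f$ with its Poisson smoothing at scale $y=|j|$, so that the second $j$-difference of $f$ splits into a ``smoothing defect'' plus a second $j$-difference of the smooth object $u_y(n) := e^{-y\sqrt{-\Delta_d}}f(n)$. Concretely, for each $j\neq 0$ I would set $y=|j|$ and write
\[
  f(n-j) - 2f(n) + f(n+j) = T_1(n,j) + T_2(n,j),
\]
with $T_1(n,j)$ the second $j$-difference of $f-u_{|j|}$ and $T_2(n,j)$ the second $j$-difference of $u_{|j|}$. This is the discrete counterpart of Taibleson's trick in \cite{Taibleson1964}. By translation invariance of the $\ell^p(\ZZ)$-norm, $\|T_1(\cdot,j)\|_p \le 4\,\|f-u_{|j|}\|_p$.

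For $T_2$, I would use the telescoping identity
\[
  u_y(n-j) - 2u_y(n) + u_y(n+j) = \sum_{k=0}^{|j|-1}\sum_{m=n-k-1}^{n+k-1} \delta_{right}^2 u_y(m),
\]
which together with $\delta_{right}^2 u_y(m) = \Delta_d u_y(m+1) = \partial_y^2 u_y(m+1)$ (Poisson equation) gives $\|T_2(\cdot,j)\|_p \le |j|^2\,\|\partial_y^2 u_y\|_p|_{y=|j|}$. For $T_1$, I would exploit that $\partial_y u_y(n)\to 0$ as $y\to\infty$ by \cite[Lemma 2.13]{AbadiasDeLeonContreras2022}, so that $\partial_y u_y(n) = -\int_y^\infty \partial_r^2 u_r(n)\,dr$, combine this with $f-u_y=-\int_0^y \partial_s u_s\,ds$, and apply Fubini to obtain
\[
  f(n)-u_y(n) = \int_0^\infty \min(r,y)\,\partial_r^2 u_r(n)\,dr,
\]
hence $\|T_1(\cdot,j)\|_p \le 4\int_0^\infty \min(r,|j|)\,\|\partial_r^2 u_r\|_p\,dr$.

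Next, I would divide by $|j|^\alpha$, raise to the $q$-th power and sum against $1/|j|$ (taking the supremum when $q=\infty$), then pass from the sum over $j\in\ZZ\setminus\{0\}$ to an integral in $y>0$. For the $T_2$ contribution this directly produces $\int_0^\infty (y^{2-\alpha}\|\partial_y^2 u_y\|_p)^q\,\frac{dy}{y}$. For the $T_1$ contribution, I would split the $\min$-kernel as $\int_0^y r g(r)\,dr + y\int_y^\infty g(r)\,dr$ with $g(r)=\|\partial_r^2 u_r\|_p$, and apply the continuous Hardy inequality (\Cref{Hardy_inequality}) to each piece; both yield the same bound $C\int_0^\infty (y^{2-\alpha}\|\partial_y^2 u_y\|_p)^q\,\frac{dy}{y}$. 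This last quantity is finite: since $0<\alpha<2$ implies $[\alpha]+1\le 2$, we may take $l=2$ in the norm defining $\Lambda_P^{\alpha,p,q}$, which is legitimate by Lemmata \ref{lemma:inequality_add_derivative} and \ref{lemma:inequality_remove_derivative} (the Poisson analogue of \Cref{remark:amount_of_derivatives_on_t}).

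The main obstacle I anticipate is the Hardy step for $T_1$: the ``forward'' Hardy inequality for $\int_0^y r g(r)\,dr$ works because $\alpha q>0$, but the ``dual'' Hardy inequality for the tail $y\int_y^\infty g(r)\,dr$ carries an exponent constraint that is borderline across the whole range $0<\alpha<2$, $1\le q\le\infty$. In the delicate regimes I would replace the dual Hardy by the elementary monotone bound $y\int_y^\infty g(r)\,dr \le \int_y^\infty r g(r)\,dr$ and then invoke a standard integral-form Hardy inequality on the tail, so that the clean bound $\int_0^\infty (y^{2-\alpha}\|\partial_y^2 u_y\|_p)^q\,\frac{dy}{y}$ is recovered uniformly in $\alpha$ and $q$. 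The case $q=\infty$ is handled by replacing the $\sum$ with $\sup$ throughout and is strictly simpler.
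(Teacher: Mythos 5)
Your decomposition at scale $y=|j|$ and the treatment of $T_2$ (telescoping plus the Poisson equation $\delta_{right}^2 u_y=\Delta_d u_y(\cdot+1)=\partial_y^2u_y(\cdot+1)$) are fine, and for $0<\alpha<1$ the whole argument closes. The genuine gap is in the range $1\le\alpha<2$, which is the actual content of the proposition (the case $0<\alpha<1$ already follows from \Cref{prop:Lambda_P_implies_C_case_0_1}). The step $\norm{T_1(\cdot,j)}_p\le 4\norm{f-u_{|j|}}_p$ discards the second-difference structure, and the resulting majorant cannot be controlled by $\norm{y^{2-\alpha}\partial_y^2u_y}_{p,q}$. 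Concretely, with $g(r)=\norm{\partial_r^2u_r}_p$, the tail contribution $y^{1-\alpha}\int_y^\infty g(r)\,dr$ requires the second inequality of \Cref{Hardy_inequality} with $r=(1-\alpha)q$, which is admissible only for $\alpha<1$; for $\alpha\ge1$ the corresponding weighted Hardy inequality is false in general (the weight condition $\int_t^\infty y^{q'(\alpha-1)-1}\,dy<\infty$ fails). Your proposed patch $y\int_y^\infty g\le\int_y^\infty rg(r)\,dr$ makes matters strictly worse: it asks for a tail Hardy inequality with weight exponent $-\alpha q$, which is never admissible. Worse still, the hypothesis $\norm{y^{2-\alpha}\partial_y^2u_y}_{p,q}<\infty$ with $\alpha\ge1$ does not even guarantee $\int_1^\infty g(r)\,dr<\infty$ (H\"older against $r^{\alpha-1}\in L^{q'}((1,\infty),dr/r)$ fails), so your majorant for $\norm{T_1(\cdot,j)}_p$ may be identically $+\infty$. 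This is the discrete avatar of the classical fact that a Zygmund-class function need not have first-order modulus of continuity $O(y)$.

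The paper avoids this by never separating the second difference from the semigroup. It bounds the smoothing defect by $\int_0^t\norm{\partial_u e^{-u\sqrt{-\Delta_d}}\bigl(f(\cdot+j)-2f(\cdot)+f(\cdot-j)\bigr)}_p\,du$, writes $\partial_u e^{-u\sqrt{-\Delta_d}}=-\int_u^t\partial_w^2e^{-w\sqrt{-\Delta_d}}\,dw+\partial_te^{-t\sqrt{-\Delta_d}}$, controls the double integral by $4\int_0^tw\norm{\partial_w^2u_w}_p\,dw$ (forward Hardy, valid for all $\alpha>0$, exactly as in your first piece), and, crucially, telescopes the second difference \emph{inside} the boundary term to gain the factor $j\cdot t\le t^2$ and land on $t^2\norm{\partial_t\delta_{right}u_t}_p$; together with $t^2\norm{\delta_{right}^2u_t}_p$ from the $T_2$-analogue, both are converted into $\norm{t^{2-\alpha}\partial_t^2u_t}_{p,q}$ by \Cref{lemma:inequality_interchange_derivative}(ii). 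Some such extra use of the difference structure is indispensable for $1\le\alpha<2$; as written, your proof only establishes the subcase $0<\alpha<1$.
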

\begin{proof}
    Let $f \in \Lambda_P^{\alpha,p,q}$. If $0 < \alpha < 1$ and $1\leq q<\infty,$  from \Cref{prop:Lambda_P_implies_C_case_0_1} we have that $f \in C^{\alpha,p,q}(\ZZ),$ so
    \[
        \biggl(\sum_{j\neq 0} \biggl\| \frac{ f(\cdot-j)- 2f(\cdot) + f(\cdot+j) }{|j|^{\alpha}} \biggr\|_p^q\frac{1}{|j|}\biggr)^q \leq 2  \biggl(\sum_{j\neq 0} \biggl\| \frac{ f(\cdot+j)- f(\cdot)}{|j|^{\alpha}} \biggr\|_p^q\frac{1}{|j|}\biggr)^q<\infty.
    \]
The case  $0 < \alpha < 1$ and $q=\infty$ is analogous.

    Assume that $1 \leq \alpha < 2$ and $1 \leq q < \infty$. We can rewrite the sum similarly as we did in the proof of 
    \Cref{prop:Lambda_P_implies_C_case_0_1} to get 
    \[
        \biggl(\sum_{j\neq 0} \biggl\| \frac{ f(\cdot-j)- 2f(\cdot) + f(\cdot+j) }{|j|^{\alpha}} \biggr\|_p^q\frac{1}{|j|}\biggr)^q \leq   C_{\alpha,q}
                                                                    \left(
                                                                        \int_0^\infty (t^{-\alpha}\omega(t,p))^q \frac{dt}{t}
                                                                    \right)^{\frac{1}{q}},
    \]
    where $\omega(t,p) := \sup_{\substack{0 \leq j \leq t\\j\in\NN_0}}\norm{f(\cdot+j)-2f(\cdot)+f(\cdot-j)}_p$.
   Let  $j\in\NN$ such that $j \leq t$. We can write
    \[
        \begin{aligned}
            & \norm{f(\cdot+j)-2f(\cdot)+f(\cdot-j)}_p \\ 
            & \leq    \norm{
                            f(\cdot + j) - e^{-t\sqrt{-\Delta_d}}f(\cdot+j)
                            -2 {(}f(\cdot) - e^{-t\sqrt{-\Delta_d}}f(\cdot){)}
                            + f(\cdot - j) - e^{-t\sqrt{-\Delta_d}}f(\cdot-j) 
                        }_p \\
            &  \quad + \norm{
                            e^{-t\sqrt{-\Delta_d}} \left( f(\cdot+j)-2f(\cdot)+f(\cdot-j) \right)
                        }_p  = I + II.
        \end{aligned}   
    \]
   By using Minkowski's integral inequality, \cite[Lemma 2.12.B]{AbadiasDeLeonContreras2022}    and the fact that
    $\partial_u e^{-u\sqrt{-\Delta_d}}f = - \int_u^t \partial_w^2 e^{-w\sqrt{-\Delta_d}}f \, dw + \partial_t e^{-t\sqrt{-\Delta_d}}f$, we get that
    \begin{align*}
        I &\leq \int_0^t \norm{\partial_u e^{-u\sqrt{-\Delta_d}} \left( f(\cdot+j)-2f(\cdot)+f(\cdot-j) \right) }_p \, du\\
        &\leq \int_0^t \int_u^t \norm{\partial_w^2 e^{-w\sqrt{-\Delta_d}} \left( f(\cdot+j)-2f(\cdot)+f(\cdot-j) \right) }_p \, dw \, du \\
                & \quad + \int_0^t \norm{\partial_t e^{-t\sqrt{-\Delta_d}} \left( f(\cdot+j)-2f(\cdot)+f(\cdot-j) \right) }_p \, du, \\
                & = I_1 + I_2.
        \end{align*}  
    On the one hand, we have that
    \[
        \begin{aligned}
            I_1  & \leq  4 \int_0^t \int_u^t \norm{\partial_w^2 e^{-w\sqrt{-\Delta_d}} f }_p \, dw \, du
                  =     4 \int_0^t \int_0^w \norm{\partial_w^2 e^{-w\sqrt{-\Delta_d}} f }_p \, du \, dw \\
                & =     4 \int_0^t w \norm{\partial_w^2 e^{-w\sqrt{-\Delta_d}} f }_p \, dw.    
        \end{aligned}
    \]
    On the other hand, we can write
    \[
        \begin{aligned}
            I_2  & \leq  2 t \norm{\partial_t e^{-t\sqrt{-\Delta_d}} \left( f(\cdot+j)-f(\cdot) \right) }_p \\
                &  =    2 t \norm{\sum_{j'=1}^{j} \partial_t e^{-t\sqrt{-\Delta_d}} \left( f(\cdot+j')-f(\cdot+j'-1) \right) }_p \\
                & \leq  2 t^2 \norm{\partial_t \delta_{right} e^{-t\sqrt{-\Delta_d}} f }_p.
        \end{aligned}
    \]
    Moreover, since $j\le t,$
    \[
    \begin{aligned}
            II  & =\norm{\sum_{j'=1}^{j} \delta_{right} e^{-t\sqrt{-\Delta_d}} (f(\cdot-j')-f(\cdot+j'-1)) }_p \\
                & = \norm{\sum_{j'=1}^{j} \sum_{k=0}^{2j'-2} \delta_{right}^2 e^{-t\sqrt{-\Delta_d}} f(\cdot-j'+k) }_p 
                  \leq \sum_{j'=1}^{j} \sum_{k=0}^{2j'-2} \norm{\delta_{right}^2 e^{-t\sqrt{-\Delta_d}} f}_p \\
                & = j^2 \norm{\delta_{right}^2 e^{-t\sqrt{-\Delta_d}} f}_p \leq  t^2 \norm{\delta_{right}^2 e^{-t\sqrt{-\Delta_d}} f}_p.
        \end{aligned}
    \]
    Therefore, we have got the following inequality 
    \[
        \begin{aligned}
            \norm{f(\cdot+j)-2f(\cdot)+f(\cdot-j)}_p    & \leq 4 \int_0^t w \norm{\partial_w^2 e^{-w\sqrt{-\Delta_d}} f }_p \, du 
                                                             + 2 t^2 \norm{\partial_t \delta_{right} e^{-t\sqrt{-\Delta_d}} f }_p \\
                                                        & \quad + t^2 \norm{\delta_{right}^2 e^{-t\sqrt{-\Delta_d}} f}_p,
        \end{aligned}    
    \]
so
 \[
        \begin{aligned}
            \left(
                \int_0^\infty
                    \left(
                        t^{-\alpha}\omega(t,p)
                    \right)^q
                    \frac{dt}{t}
            \right)^{\frac{1}{q}}               & \leq 4 \left(
                \int_0^\infty
                    \left(
                        t^{-\alpha} \int_0^t w \norm{\partial_w^2 e^{-w\sqrt{-\Delta_d}} f }_pdw
                    \right)^q
                    \frac{dt}{t}
            \right)^{\frac{1}{q}} \\
                                                & +    2\left(
                \int_0^\infty
                    \left(
                        t^{2-\alpha}  \norm{\partial_t \delta_{right} e^{-t\sqrt{-\Delta_d}} f }_p
                    \right)^q
                    \frac{dt}{t}
            \right)^{\frac{1}{q}} \\
                                                & +    \left(
                \int_0^\infty
                    \left(
                        t^{2-\alpha}  \norm{\delta_{right}^2 e^{-t\sqrt{-\Delta_d}} f}_p
                    \right)^q
                    \frac{dt}{t}
            \right)^{\frac{1}{q}}.
        \end{aligned}
    \]
By applying Hardy's inequality (\Cref{Hardy_inequality}) in the first summand of above expression, and \Cref{lemma:inequality_interchange_derivative}(ii) in the second and third ones, we get the result.

The case  $1 \leq \alpha < 2$ and $q=\infty$ can be proved by following the same ideas.
    
\end{proof}

\begin{proposition}
    \label{prop:C2_implies_Lambda_H_case_0_2}
    Let $0 < \alpha < 2$, $1\leq p,q \leq \infty$ and $f:\ZZ \to \RR$ such that
    $\frac{f}{1+|\cdot|^\alpha} \in \ell^q(\ZZ, \mu)$ with \[
\sum_{j\neq 0} \biggl\| \frac{ f(\cdot-j)- 2f(\cdot) + f(\cdot+j) }{|j|^{\alpha}} \biggr\|_p^q\frac{1}{|j|}<\infty,\quad\text{if } 1\leq q<\infty,
    \] and \[
\sup_{j\neq 0} \biggl\| \frac{ f(\cdot-j)- 2f(\cdot) + f(\cdot+j) }{|j|^{\alpha}} \biggr\|_p<\infty, \quad\text{if } q=\infty.
\] Then $f \in \Lambda_H^{\alpha,p,q}$.
\end{proposition}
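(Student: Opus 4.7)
The plan is to show that $\norm{t^{2-\alpha/2}\partial_t^2 e^{t\Delta_d}f}_{p,q} < \infty$; by \Cref{remark:amount_of_derivatives_on_t} this gives $f \in \Lambda_H^{\alpha,p,q}$. Working with two time-derivatives rather than one is essential: the best available bound $|\partial_t G(t,j)| \leq Ct^{-1/2}/(1+j^2)$ on $(0,1)$ only yields integrability against the weight $t^{1-\alpha/2}$ when $\alpha<1$, whereas the extra factor of $t$ in the weight for $k=2$ pushes this threshold past $\alpha=2$. The starting algebraic step combines the cancellation $\sum_{j \in \ZZ}\partial_t^2 G(t,j) = 0$ with the symmetry $G(t,j) = G(t,-j)$ to write
\[
    \partial_t^2 e^{t\Delta_d}f(n) = \sum_{j \geq 1} \partial_t^2 G(t,j)\,\bigl[f(n+j) - 2f(n) + f(n-j)\bigr],
\]
after which Minkowski's integral inequality (or, when $p = \infty$, the triangle inequality) gives
\[
    \norm{\partial_t^2 e^{t\Delta_d}f}_p \leq \sum_{j \geq 1} |\partial_t^2 G(t,j)|\, \Omega_p(j), \qquad \Omega_p(j) := \norm{f(\cdot+j)-2f(\cdot)+f(\cdot-j)}_p.
\]
Since $\partial_t^2 G(t,j) = \delta_{right}^4 G(t,j-2)$, the kernel can be controlled using \Cref{lemma:inequality_heat_kernel} with $l = 4$.

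Next I would mimic the strategy of \Cref{prop:C_implies_Lambda_H_case_0_1}: for $1 \leq q < \infty$, split the time integral $\int_0^\infty \cdots \, dt/t$ as $\int_0^1 + \int_1^\infty$ and further split the second piece by $1 \leq j \leq \sqrt{t}$ vs.\ $j > \sqrt{t}$. On $(0,1)$, the choice $\beta = -1/2$ in \Cref{lemma:inequality_heat_kernel} gives $|\delta_{right}^4 G(t,j-2)| \leq Ct^{-1/2}/(1+(j-2)^4)$; factoring out the finite integral $\int_0^1 t^{q(3/2-\alpha/2)-1}\,dt$ (convergent for $\alpha < 3$) and applying Hölder's inequality to the remaining $j$-sum reduces matters to the hypothesis. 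On $[1,\infty)$ with $j \leq \sqrt{t}$ I would use $\beta = -5/2$, obtaining the uniform bound $|\delta_{right}^4 G(t,j-2)| \leq Ct^{-5/2}$; and with $j > \sqrt{t}$ the choice $\beta = 0$, yielding $|\delta_{right}^4 G(t,j-2)| \leq C/(1+|j-2|^5)$. In both regimes, the change of variable $u = \sqrt{t}$ converts the time integral into a form suitable for the continuous Hardy inequality (\Cref{Hardy_inequality}), applied to step functions built from $\{\Omega_p(j)\}$. Each piece is then bounded by a constant multiple of $\bigl(\sum_{j \geq 1}\Omega_p(j)^q\, j^{-\alpha q -1}\bigr)^{1/q}$, which is exactly the second-difference hypothesis.

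The case $q = \infty$ proceeds along the same lines, with suprema replacing integrals: one estimates
\[
    t^{2-\alpha/2}\norm{\partial_t^2 e^{t\Delta_d}f}_p \leq \Bigl(\sup_{j \neq 0}\frac{\Omega_p(j)}{|j|^{\alpha}}\Bigr)\cdot t^{2-\alpha/2}\sum_{j\geq 1}|\partial_t^2 G(t,j)|\,j^\alpha,
\]
and the time-weighted $\ell^1$-expression on the right is bounded uniformly in $t$ by splitting the $j$-sum at $\sqrt{t}$ and reusing the same three kernel estimates (this is analogous to \cite[Proof of Theorem 3.3]{AbadiasDeLeonContreras2022}). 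The main technical obstacle is the bookkeeping across the three $(t,j)$-regimes, and in particular verifying that each Hardy-type reduction produces exactly the exponent $j^{-\alpha q - 1}$ matching the hypothesis; the real insight is that once the second-difference identity above is in place, working at the level $k = 2$ unlocks integrability for the full range $\alpha \in (0,2)$, whereas a naive attempt with $k = 1$ fails as soon as $\alpha \geq 1$.
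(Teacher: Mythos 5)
Your proof is correct, and it follows the same architecture as the paper's --- the symmetrization identity based on $\sum_{j}\partial_t^k G(t,j)=0$ and $G(t,j)=G(t,-j)$, which converts the time derivative of the semigroup into a sum of second differences of $f$ against discrete differences of the kernel, followed by the $(0,1)$ versus $(1,\infty)$ and $j\le\sqrt t$ versus $j>\sqrt t$ splits, \Cref{lemma:inequality_heat_kernel}, and Hardy's inequalities exactly as in \Cref{prop:C_implies_Lambda_H_case_0_1}. The one genuine difference is the order of the derivative: the paper works directly with $k=1$, i.e.\ with $\partial_t G(t,j)=\delta_{right}^2G(t,j-1)$ and the norm $\norm{t^{1-\alpha/2}\partial_t e^{t\Delta_d}f}_{p,q}$ required by the definition of $\Lambda_H^{\alpha,p,q}$, and then simply asserts that ``the rest of the proof follows from the same techniques''; you instead prove finiteness of $\norm{t^{2-\alpha/2}\partial_t^2 e^{t\Delta_d}f}_{p,q}$ using $\partial_t^2G(t,j)=\delta_{right}^4G(t,j-2)$ and \Cref{lemma:inequality_heat_kernel} with $l=4$, and descend to $k=1$ via \Cref{lemma:inequality_add_derivative}(i) (legitimate here, since the size condition is a standing hypothesis and $m=0$, $l=1$ satisfies $l>\alpha/2$ for $\alpha<2$). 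Your extra derivative is not cosmetic: in the small-time term the only bound \Cref{lemma:inequality_heat_kernel} offers for $t\in(0,1)$ and $l=2$ is $|\delta_{right}^2G(t,j)|\le Ct^{-1/2}/(1+j^2)$, and a verbatim transfer of the term $A$ from \Cref{prop:C_implies_Lambda_H_case_0_1} then requires $\alpha<1$ both for the time integral $\int_0^1 t^{(1/2-\alpha/2)q}\,dt/t$ and for the H\"older step on $\sum_j\Omega_p(j)/j^2$; at $l=4$ the bound $Ct^{-1/2}/(1+j^4)$ gives room in both variables up to $\alpha<3$. So your route supplies a detail that the paper's one-line reduction glosses over, at the modest cost of invoking the derivative-transfer lemma. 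The exponent bookkeeping you flag as the main risk does check out: all three regimes reduce to $\bigl(\sum_{j\ge1}\Omega_p(j)^q j^{-\alpha q-1}\bigr)^{1/q}$ for the stated choices $\beta=-1/2$, $\beta=-5/2$ and $\beta=0$, and the $q=\infty$ computation closes uniformly in $t$ for $0<\alpha<2$.
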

\begin{proof}
    Since for every  $t>0$ it holds that $G(t,j)=G(t,-j)$,  $j \in \NN$, and $\partial_t e^{t \Delta_d}1 = 0$ (where $1$ denotes in this case the sequence with all entries equal to 1),  we have that 
    \[
        \begin{aligned}
            \norm{\partial_t e^{t\Delta_d}f}_p  & = \norm{\frac{1}{2} \sum_{j\in \ZZ} \partial_t G(t,j) (f(n-j)-2f(n)+f(n+j))}_p\\
                                                & \leq \sum_{j \geq 0} \abs{\delta_{right}^2 G(t,j)} \Omega_p(j+1),
        \end{aligned}
    \]
    with $\Omega_p(j) = \norm{f(\cdot-j) -2f(\cdot) + f(\cdot+j)}_p$. 
    
    The rest of the proof  follows from the same techniques used  to prove \Cref{prop:C_implies_Lambda_H_case_0_1}.
\end{proof}

From \Cref{prop:Lambda_H_implies_Lambda_P,prop:Lambda_P_implies_C2_case_0_2,prop:C2_implies_Lambda_H_case_0_2}, it follows next theorem, which is one of our main results: the complete characterization of the discrete Besov and Zygmund spaces for $0<\alpha<2.$

 \begin{theorem}
    \label{theorem:equal_spaces_0_2}
    Let $0< \alpha < 2$, $1 \leq p,q \leq \infty$ and $f : \ZZ \to \RR$ be a function such that 
    $\frac{f}{1+|\cdot|^\alpha} \in \ell^q(\ZZ, \mu)$ and $\sum_{j\in\ZZ} \frac{\abs{f(j)}}{1+\abs{j}^2} < \infty$.
    The following are equivalent:
    \begin{enumerate}
        \item $f \in \Lambda_H^{\alpha,p,q}$.
        
        \item $f \in \Lambda_P^{\alpha,p,q}$.
        
        \item $f$ satisfies
        \[
\sum_{j\neq 0} \biggl\| \frac{ f(\cdot-j)- 2f(\cdot) + f(\cdot+j) }{|j|^{\alpha}} \biggr\|_p^q\frac{1}{|j|}<\infty,\quad\text{if } 1\leq q<\infty,
    \] and \[
\sup_{j\neq 0} \biggl\| \frac{ f(\cdot-j)- 2f(\cdot) + f(\cdot+j) }{|j|^{\alpha}} \biggr\|_p<\infty, \quad\text{if } q=\infty.
\]
    \end{enumerate}
\end{theorem}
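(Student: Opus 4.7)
The plan is to establish the theorem as a short cycle of implications $(1) \Rightarrow (2) \Rightarrow (3) \Rightarrow (1)$, leveraging the three propositions already proved earlier in the subsection. Since the theorem's hypotheses explicitly include both size conditions $\frac{f}{1+|\cdot|^\alpha} \in \ell^q(\ZZ,\mu)$ and $\sum_{j\in\ZZ} \frac{|f(j)|}{1+|j|^2}<\infty$, there is no extra work to verify the auxiliary conditions that appear in each proposition.

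First I would prove $(1) \Rightarrow (2)$ by invoking \Cref{prop:Lambda_H_implies_Lambda_P} directly: the hypothesis $f \in \Lambda_H^{\alpha,p,q}$ combined with the standing assumption $\sum_{j\in\ZZ} \frac{|f(j)|}{1+|j|^2}<\infty$ is exactly what that proposition requires to conclude $f \in \Lambda_P^{\alpha,p,q}$. Next, $(2) \Rightarrow (3)$ follows immediately from \Cref{prop:Lambda_P_implies_C2_case_0_2}, which states precisely that membership in $\Lambda_P^{\alpha,p,q}$ controls the symmetric second-difference quantity that defines (3) (in both the $1 \leq q < \infty$ and $q = \infty$ cases).

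Finally, for $(3) \Rightarrow (1)$ I would apply \Cref{prop:C2_implies_Lambda_H_case_0_2}, whose hypotheses are exactly that $\frac{f}{1+|\cdot|^\alpha} \in \ell^q(\ZZ,\mu)$ (given) together with the second-difference condition (3). This yields $f \in \Lambda_H^{\alpha,p,q}$, closing the cycle.

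There is essentially no obstacle here; the theorem is a packaging result. The only point worth a brief remark is that although both size conditions are assumed in the statement to have a clean equivalence, within the chain itself each step uses precisely one of them: the implication to the Poisson characterization requires $\sum_{j\in\ZZ} \frac{|f(j)|}{1+|j|^2}<\infty$ so that $e^{-y\sqrt{-\Delta_d}}f$ is well-defined and satisfies the needed subordination and boundary behavior, while the implication from the second-difference condition back to the heat side requires $\frac{f}{1+|\cdot|^\alpha} \in \ell^q(\ZZ,\mu)$ to guarantee the convergence of $e^{t\Delta_d}f$ and the appropriate decay of its derivatives via \Cref{remark:some_semigroup_properties_work_fine} and \Cref{lemma:decay_heat_semigroup}.
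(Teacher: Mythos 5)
Your proposal is correct and matches the paper exactly: the theorem is stated there as an immediate consequence of \Cref{prop:Lambda_H_implies_Lambda_P}, \Cref{prop:Lambda_P_implies_C2_case_0_2} and \Cref{prop:C2_implies_Lambda_H_case_0_2}, assembled into the same cycle $(1)\Rightarrow(2)\Rightarrow(3)\Rightarrow(1)$. Your closing remark about which size hypothesis each implication actually uses is accurate and consistent with the paper's subsequent remark on when $\sum_{j\in\ZZ}\frac{|f(j)|}{1+|j|^2}<\infty$ is genuinely needed.
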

\begin{remark}
    { Observe that the assumption $\sum_{j\in\ZZ} \frac{\abs{f(j)}}{1+\abs{j}^2} < \infty$ in the previous result is only needed for $1\le \alpha<2$. Indeed, we have proved the previous result for $0<\alpha<1$ in \Cref{theorem:equal_spaces_0_1} without that assumption (in this case it is deduced from the hypothesis $\frac{f}{1+|\cdot|^\alpha} \in \ell^q(\ZZ, \mu)$). In addition,  by proceeding in an analogous way as in the proof of \Cref{prop:Lambda_P_implies_C2_case_0_2} (but performing the change of variables $\tilde{t}=\sqrt{t}$ in $\left(\int_0^\infty (t^{-\alpha}\omega(t,p))^q \frac{dt}{t}\right)^{\frac{1}{q}}$) we can prove that $f \in \Lambda_H^{\alpha,p,q}\implies f$ satisfies \textit{(3)}, so  it can be proved that   \emph{(1)} and \emph{(3)} are equivalent for $0< \alpha < 2$, without  imposing the assumption $\sum_{j\in\ZZ} \frac{\abs{f(j)}}{1+\abs{j}^2} < \infty$.}
\end{remark}

\subsection{General case}

\begin{theorem}
    \label{theorem:step_down_up_lambda_H_space}
    Let $ \alpha > 1$, $1 \leq p,q \leq \infty$ and $f : \ZZ \to \RR$. Then, $f \in \Lambda_H^{\alpha,p,q}$ if, and only if
    $\delta_{right}f \in \Lambda_H^{\alpha-1,p,q}$.
\end{theorem}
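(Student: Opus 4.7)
The plan is to establish the equivalence by separately handling the mixed-norm semigroup condition and the weighted size condition $\frac{\cdot}{1+|\cdot|^{\alpha}}\in\ell^q(\ZZ,\mu)$, in both directions. The central algebraic fact I will use throughout is that $\delta_{right}$ commutes with $e^{t\Delta_d}$, so $\partial_t^{k'}\delta_{right}e^{t\Delta_d}f = \partial_t^{k'}e^{t\Delta_d}\delta_{right}f$. Setting $k=[\alpha/2]+1$ and $k'=[(\alpha-1)/2]+1$, a quick case analysis shows $k'\le k \le k'+1$ and $k'+1\ge k$.

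For the \emph{semigroup norm equivalence}, I would argue as follows. In the direction $f\in\Lambda_H^{\alpha,p,q}\Rightarrow \delta_{right}f\in\Lambda_H^{\alpha-1,p,q}$, the target is $\|t^{k'-(\alpha-1)/2}\partial_t^{k'}\delta_{right}e^{t\Delta_d}f\|_{p,q}=\|t^{k'-\alpha/2+1/2}\partial_t^{k'}\delta_{right}e^{t\Delta_d}f\|_{p,q}$. Apply Lemma \ref{lemma:inequality_interchange_derivative}(i) (second bullet, $m=1$) to trade the spatial difference for a time derivative at the cost of a factor $t^{1/2}$, obtaining $C\|t^{(k'+1)-\alpha/2}\partial_t^{k'+1}e^{t\Delta_d}f\|_{p,q}$; this is finite because $k'+1\ge k$ and Remark \ref{remark:amount_of_derivatives_on_t} allows one to use any sufficiently large number of time derivatives. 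For the reverse, start from $\|t^{k-\alpha/2}\partial_t^k e^{t\Delta_d}f\|_{p,q}$, apply Lemma \ref{lemma:inequality_add_derivative}(i) to insert an extra $\partial_t$, then Lemma \ref{lemma:inequality_interchange_derivative}(i) (first bullet, $m=0$) to convert that derivative into a $\delta_{right}$, arriving at $C\|t^{k-(\alpha-1)/2}\partial_t^{k}\delta_{right}e^{t\Delta_d}f\|_{p,q}$, which matches (up to an allowed change in the number of time derivatives via Remark \ref{remark:amount_of_derivatives_on_t}) the $\Lambda_H^{\alpha-1,p,q}$-semigroup norm of $\delta_{right}f$.

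For the \emph{size condition in the backward direction} (from $\delta_{right}f$ to $f$), I would imitate the proof of Lemma \ref{lemma:C_space_bound_growth}: write the telescoping identity $f(n)=f(0)-\sum_{j=0}^{n-1}\delta_{right}f(j)$ for $n\ge 1$ (and analogously for $n\le -1$), plug it into $\|\tfrac{f}{1+|\cdot|^\alpha}\|_{q,\mu}$, and apply the discrete Hardy inequality (\Cref{Hardy_discrete_inequality}) to pass from the inner sum of $|\delta_{right}f(j)|$ to $\|\tfrac{\delta_{right}f}{1+|\cdot|^{\alpha-1}}\|_{q,\mu}$.

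The \emph{main obstacle} is the forward size condition, i.e., showing $\frac{\delta_{right}f}{1+|\cdot|^{\alpha-1}}\in\ell^q(\ZZ,\mu)$ from the hypotheses on $f$. Here the naive pointwise bound $|\delta_{right}f(n)|\le|f(n)|+|f(n+1)|$ is insufficient, so I would exploit cancellation via the decomposition
\[
\delta_{right}f(n) \;=\; e^{n^2\Delta_d}\delta_{right}f(n) \;-\; \int_0^{n^2}\partial_s\delta_{right}e^{s\Delta_d}f(n)\,ds.
\]
The first summand belongs to the required space by Lemma \ref{lemma:inequality_norms_n_square_semigroup} (with $l=1$, using $\alpha\ge 1$). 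For the integral tail, I would write $\partial_s\delta_{right}e^{s\Delta_d}f(n)=\sum_{j\in\ZZ}\delta_{right}^3 G(s,n-1-j)f(j)$, estimate $\delta_{right}^3 G$ via Lemma \ref{lemma:inequality_heat_kernel} with different choices of $\beta$ adapted to whether $|j|\lesssim|n|$, $|j|\sim|n|$, or $|j|\gg|n|$ and whether $s\le 1$ or $s\ge 1$, and finally apply discrete Hardy-type inequalities (\Cref{Hardy_discrete_inequality}) to control the resulting sum in $\ell^q(\ZZ,\mu)$ against $\|\tfrac{f}{1+|\cdot|^\alpha}\|_{q,\mu}$. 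The delicate balancing of the temporal decay $s^\beta$ against the spatial decay $(1+|n-1-j|)^{-(4+2\beta)}$ is what makes this step the real work of the theorem.
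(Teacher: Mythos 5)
Your handling of the semigroup seminorms in both directions and of the size condition in the backward direction matches the paper's argument (interchanging $\partial_t$ and $\delta_{right}$ via Lemmata \ref{lemma:inequality_add_derivative} and \ref{lemma:inequality_interchange_derivative} together with \Cref{remark:amount_of_derivatives_on_t}; telescoping plus discrete Hardy for the size of $f$ from that of $\delta_{right}f$). The gap is exactly where you locate the ``real work'': the forward size condition. Your plan is to bound the tail $\int_0^{n^2}\partial_s\delta_{right}e^{s\Delta_d}f(n)\,ds$ by kernel estimates on $\delta_{right}^3G$ and discrete Hardy inequalities \emph{against} $\norm{f/(1+\abs{\cdot}^\alpha)}_{q,\mu}$, i.e.\ using only the size condition on $f$. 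This cannot succeed, because the statement it would yield is false: since the boundary term $e^{n^2\Delta_d}\delta_{right}f(n)$ is already controlled by \Cref{lemma:inequality_norms_n_square_semigroup} using only the size condition, your bound would imply that $\frac{f}{1+\abs{\cdot}^\alpha}\in\ell^q(\ZZ,\mu)$ alone forces $\frac{\delta_{right}f}{1+\abs{\cdot}^{\alpha-1}}\in\ell^q(\ZZ,\mu)$. The oscillating function $f(n)=(-1)^n\abs{n}^{\alpha-\varepsilon}$ (small $\varepsilon>0$) satisfies the former, yet $\abs{\delta_{right}f(n)}\sim 2\abs{n}^{\alpha-\varepsilon}$, so $\delta_{right}f(n)/(1+\abs{n}^{\alpha-1})\sim\abs{n}^{1-\varepsilon}$ fails the latter for every $q$. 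No choice of $\beta$ in \Cref{lemma:inequality_heat_kernel} can circumvent this, since the integral term equals $e^{n^2\Delta_d}\delta_{right}f(n)-\delta_{right}f(n)$ exactly.

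The fix --- and the paper's route --- is to let the \emph{semigroup} hypothesis on $f$ do the work at this point: after the fundamental theorem of calculus, bound the integrand pointwise by $\norm{\partial_u\delta_{right}e^{u\Delta_d}f}_p$, convert the sum over $n$ into an integral in $t$, apply the continuous Hardy inequality (\Cref{Hardy_inequality}) to arrive at $C\norm{u^{3/2-\alpha/2}\partial_u\delta_{right}e^{u\Delta_d}f}_{p,q}$, and then use \Cref{lemma:inequality_interchange_derivative} and \Cref{remark:amount_of_derivatives_on_t} to dominate this by $\norm{u^{2-\alpha/2}\partial_u^2e^{u\Delta_d}f}_{p,q}<\infty$. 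For $\alpha\ge 3$ an extra wrinkle appears: the hypothesis $\frac{m}{2}+l>\frac{\alpha}{2}$ of the interchange lemma fails for $m=l=1$, so the paper iterates the decomposition, writing $\partial_u\delta_{right}e^{u\Delta_d}f(n)=e^{u\Delta_d}\delta_{right}^3f(n-1)$, splitting off a second boundary term at $u=n^2$ handled again by \Cref{lemma:inequality_norms_n_square_semigroup}, and applying Hardy once more before invoking the interchange lemma with admissible parameters.
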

\begin{proof}

    Let $f\in \Lambda_H^{\alpha,p,q}$. First, we shall prove that 
    $\frac{{\delta_{right}f}}{1+\abs{\cdot}^{\alpha-1}}\in \ell^q(\ZZ,\mu)$.
Observe that when $q=\infty$, in virtue of the embedding $\ell^p(\ZZ) \hookrightarrow \ell^\infty(\ZZ)$, we
    have that $f \in \Lambda_H^{\alpha,\infty,\infty}$, so  $\frac{{\delta_{right}f}}{1+\abs{\cdot}^{\alpha-1}}\in \ell^\infty(\ZZ)$ (see the proof of \cite[Theorem 3.6]{AbadiasDeLeonContreras2022}).

    Let now $ 1 \leq q < \infty.$ We want to prove that $\norm{\frac{\delta_{right}f}{1+\abs{\cdot}^{\alpha-1}}}_{q,\mu}<\infty$. Since $\frac{{f}}{1+\abs{\cdot}^\alpha}\in \ell^q(\ZZ,\mu)$, then
    $\frac{{f}}{1+\abs{\cdot}^{\alpha+\frac{1}{q}}}\in \ell^\infty(\ZZ),$ so by
    \Cref{remark:some_semigroup_properties_work_fine} and \cite[Lemma 2.12.A.(iii)]{AbadiasDeLeonContreras2022} we have $\lim_{t\to 0} e^{t\Delta_d} f(n)=f(n)$, for every $n\in \ZZ$. Then,
    \[
        \begin{aligned}
            \norm{
                \frac{\delta_{right}f}{1+\abs{\cdot}^{\alpha-1}}
                }_{q,\mu}                  \leq  &  | \delta_{right}f(0) | +  \left(\sum_{n\neq 0}
                                                                    \left(
                                                            \frac{\abs{\delta_{right}f(n)}}{1+\abs{n}^{\alpha-1}}
                                                     \right)^q
                                                     \frac{1}{\abs{n}}
                                                    \right)^{\frac{1}{q}} \\
                                                               \leq &  | \delta_{right}f(0) | +  \left(
                                                                        \sum_{n\neq 0}
                                                                            \left(
                                                                                \sup_{0<t<n^2}
                                                                                \frac{\abs{e^{t\Delta_d}\delta_{right}f(n)}}
                                                                                    {1+\abs{n}^{\alpha-1}}
                                                                            \right)^q
                                                                            \frac{1}{\abs{n}}
                                                                        \right)^{\frac{1}{q}} \\
                                                               \leq &  | \delta_{right}f(0) | +  \left(
                                                                        \sum_{n\neq 0}
                                                                            \left(
                                                                                \sup_{0<t<n^2}
                                                                                \frac{
                                                                                    \abs{
                                                                                        e^{t\Delta_d}\delta_{right}f(n)
                                                                                            -e^{n^2\Delta_d}\delta_{right}f(n)
                                                                                    }
                                                                                }
                                                                                {1+\abs{n}^{\alpha-1}}
                                                                            \right)^q
                                                                            \frac{1}{\abs{n}}
                                                                        \right)^{\frac{1}{q}} \\
                                                                    & + \left(
                                                                        \sum_{n\neq 0}
                                                                            \left(
                                                                                \frac{\abs{e^{n^2\Delta_d}\delta_{right}f(n)}}
                                                                                    {1+\abs{n}^{\alpha-1}}
                                                                            \right)^q
                                                                            \frac{1}{\abs{n}}
                                                                        \right)^{\frac{1}{q}} \\
                                                                    & =:  | \delta_{right}f(0) | +A + B.
        \end{aligned}
    \]
    Since $\frac{{f}}{1+\abs{\cdot}^\alpha}\in \ell^q(\ZZ,\mu),$ from \Cref{lemma:inequality_norms_n_square_semigroup} we deduce that $B$ is finite.

    Now we study the sum $A$. Suppose that $\alpha \in (1,3)$. Then,
    \[
        \begin{aligned}
            A   & \leq  \left(
                            \sum_{n \neq 0}
                            \left(\sup_{0<t<n^2}
                                \int_t^{n^2}
                                    \abs{\partial_u\delta_{right} e^{u \Delta_d}f(n)} \, du
                            \right)^q
                            \abs{n}^{-q(\alpha-1)-1}
                        \right)^{\frac{1}{q}} \\
            & \leq  \left(
                            \sum_{n \neq 0}
                            \left(
                                \int_0^{n^2}
                                    \abs{\partial_u\delta_{right} e^{u \Delta_d}f(n)} \, du
                            \right)^q
                            \abs{n}^{-q(\alpha-1)-1}
                        \right)^{\frac{1}{q}} \\
                & \leq  \left(
                            \sum_{n \neq 0}
                            \left(
                                \int_0^{n^2}
                                    \norm{\partial_u\delta_{right} e^{u \Delta_d}f}_p \, du
                            \right)^q
                            \abs{n}^{-q(\alpha-1)-1}
                        \right)^{\frac{1}{q}} \\
                & \leq  C
                        \left(
                            \int_1^\infty
                            \left(
                                \int_0^{[t]^2}
                                    \norm{\partial_u\delta_{right} e^{u \Delta_d}f}_p \, du
                            \right)^q
                            [t]^{-q(\alpha-1)-1}
                            \, dt
                        \right)^{\frac{1}{q}}.
        \end{aligned}
    \]
    Since $t>1$, we have that $[t] > \frac{t}{2}$ and 
    \[
        \begin{aligned}
            A   & \leq  C
                        \left(
                            \int_1^\infty
                            \left(
                                \int_0^{t^2}
                                    \norm{\partial_u\delta_{right} e^{u \Delta_d}f}_p \, du
                            \right)^q
                            t^{-q(\alpha-1)-1}
                            \, dt
                        \right)^{\frac{1}{q}} \\
                & =     C
                        \left(
                            \int_1^\infty
                            \left(
                                \int_0^{x}
                                    \norm{\partial_u\delta_{right} e^{u \Delta_d}f}_p \, du
                            \right)^q
                            x^{\frac{-q(\alpha-1)}{2}-1}
                            \, dx
                        \right)^{\frac{1}{q}}.
        \end{aligned}
    \]
    By using Hardy's inequality (see \Cref{Hardy_inequality}) we get that
    \[
        A   \leq  C
                \left(
                    \int_0^\infty
                    \left(
                        u \norm{\partial_u\delta_{right} e^{u \Delta_d}f}_p
                    \right)^q
                    u^{\frac{-q(\alpha-1)}{2}-1}
                \right)^{\frac{1}{q}}
            =     C \norm{u^{\frac{3}{2}-\frac{\alpha}{2}} \partial_u \delta_{right} e^{u\Delta_d} f}_{p,q}.
    \]
    Thus,  we use \Cref*{lemma:inequality_interchange_derivative} and Remark \ref{remark:amount_of_derivatives_on_t} to 
obtain 
    \[
        A   \leq C \norm{u^{\frac{3}{2}-\frac{\alpha}{2}} \partial_u \delta_{right} e^{u\Delta_d} f}_{p,q}
            \leq C \norm{u^{2-\frac{\alpha}{2}} \partial_u^2 e^{u\Delta_d} f}_{p,q} < \infty.
    \]
    Now consider $\alpha \in [3,5)$. Observe that the techniques that we will present in this part of the proof are enough to prove all the cases
    $\alpha \in [2k+1,2k+3)$ with $k \in \NN$, but we just prove the case $\alpha \in [3,5)$. We start by using the fact that
    the semigroup is the solution to the heat equation, and splitting our sum into 2 different parts, 
    \[
        \begin{aligned}
            A   & \leq  \left(
                            \sum_{n \neq 0}
                            \left(
                                \int_0^{n^2}
                                    \abs{e^{u \Delta_d} \delta_{right}^3 f(n-1)} \, du
                            \right)^q
                            \abs{n}^{-q(\alpha-1)-1}
                        \right)^{\frac{1}{q}} \\
                & \leq  \left(
                            \sum_{n \neq 0}
                            \left(
                                \int_0^{n^2}
                                    \int_u^{n^2}
                                        \abs{\partial_w e^{w \Delta_d} \delta_{right}^3 f(n-1)}
                                        \, dw
                                    \, du
                            \right)^q
                            \abs{n}^{-q(\alpha-1)-1}
                        \right)^{\frac{1}{q}} \\
                & \qquad + \left(
                            \sum_{n \neq 0}
                            \left(
                                \frac{\abs{e^{n^2 \Delta_d} \delta_{right}^3 f(n-1)}}{\abs{n}^{\alpha-3}}
                            \right)^q
                            \frac{1}{\abs{n}}
                        \right)^{\frac{1}{q}}\\
                & =:    A_1 + A_2.
        \end{aligned}
    \]
    Since $\frac{f}{1+|\cdot|^{\alpha}}\in \ell^q(\ZZ,\mu)$ we have that $\frac{f(\cdot -1)}{1+|\cdot|^{\alpha}}\ell^q(\ZZ,\mu),$ so by \Cref*{lemma:inequality_norms_n_square_semigroup} we get that $A_2$ is finite. For $A_1$ we interchange the integrals, apply
    Hardy's inequality and the techniques for the case $\alpha \in (1,3)$, to get
    \[
        \begin{aligned}
            A_1 & \leq C \left(
                            \sum_{n \neq 0}
                            \left(
                                \int_0^{n^2}
                                    \int_u^{n^2}
                                        \norm{\partial_w^2 \delta_{right} e^{w \Delta_d}  f}_p
                                        \, dw
                                    \, du
                            \right)^q
                            \abs{n}^{-q(\alpha-1)-1}
                        \right)^{\frac{1}{q}}\\
            & \leq  C
                        \left(
                            \int_1^\infty
                            \left(
                                \int_0^{t^2}
                                    w \norm{ \partial_w^2 \delta_{right} e^{w \Delta_d} f}_p \, dw
                            \right)^q
                            t^{-q(\alpha-1)-1} \, dt
                        \right)^{\frac{1}{q}} \\
                & =     C
                        \left(
                            \int_1^\infty
                            \left(
                                \int_0^{x}
                                    w \norm{ \partial_w^2 \delta_{right} e^{w \Delta_d} f}_p \, dw
                            \right)^q
                            x^{-\frac{q(\alpha-1)}{2}-1} \, dx
                        \right)^{\frac{1}{q}} \\
                & \leq  C
                        \left(
                            \int_0^\infty
                            \left(
                                w^{\frac{5}{2}-\frac{\alpha}{2}}
                                \norm{ \partial_w^2 \delta_{right} e^{w \Delta_d} f}_p
                            \right)^q dw                      \right)^{\frac{1}{q}} \\
                &= C\norm{w^{\frac{5}{2}-\frac{\alpha}{2}} \partial_w^2 \delta_{right} e^{w \Delta_d} f}_{p,q}.
        \end{aligned} 
    \]

    From   \Cref{lemma:inequality_interchange_derivative}  and Remark \ref{remark:amount_of_derivatives_on_t} (in case $\alpha\in [3,4)$) we deduce that 
    $$A_1 \leq C \norm{w^{3-\frac{\alpha}{2}} \partial_w^3 e^{w \Delta_d} f}_{p,q} < \infty.$$
    Finally, we prove the semigroup condition. First we shall consider  $\alpha \in (1,3)$. 
    By using \Cref{lemma:inequality_interchange_derivative}(i) and Remark \ref{remark:amount_of_derivatives_on_t} (in case $\alpha\in (1,2)$) we get that
    \[
        \norm{t^{1-\frac{\alpha-1}{2}}\partial_t e^{t \Delta_d} \delta_{right} f}_{p,q}
            = \norm{t^{\frac{3}{2}-\frac{\alpha}{2}}\partial_t \delta_{right} e^{t \Delta_d} f}_{p,q}
            \leq C \norm{t^{2-\frac{\alpha}{2}}\partial_t^2 e^{t \Delta_d} f}_{p,q} < \infty.
    \]
    In general, 
 if $\alpha \in [2k+1,2k+3)$ with $k\in \NN,$ by using \Cref*{lemma:inequality_interchange_derivative}(i) and Remark \ref{remark:amount_of_derivatives_on_t} (in case $\alpha \in [2k+1,2k+2)$) we get that
    \begin{align*}
        \norm{t^{k+1-\frac{\alpha-1}{2}}\partial_t^{k+1}  e^{t \Delta_d} \delta_{right} f}_{p,q}
            &= \norm{t^{k+\frac{3}{2}-\frac{\alpha}{2}}\partial_t^{k+1}  \delta_{right} e^{t \Delta_d} f}_{p,q}
            \\&\leq C \norm{t^{k+2-\frac{\alpha}{2}}\partial_t^{k+2}  e^{t \Delta_d} f}_{p,q} < \infty.
\end{align*}

    Assume now that $\delta_{right} f \in \Lambda_H^{\alpha-1,p,q}$. By definition, we have that
    $\frac{{\delta_{right}f}}{1+\abs{\cdot}^{\alpha-1}} \in \ell^q(\ZZ, \mu)$. Thus, the proof of \Cref*{lemma:C_space_bound_growth}
    gives that
    \begin{align*}
            \norm{\frac{f}{1+\abs{\cdot}^\alpha}}_{q,\mu} &\leq |f(0)| +C+ C\left(
                                \sum_{j=1}^\infty
                                \left(\abs{\delta_{right}f(j-1)} \right)^q \frac{1}{j^{ (\alpha-1) q +1}}
                            \right)^{\frac{1}{q}} \\
                    &+    C\left(
                                \sum_{j=1}^\infty
                                \left(\abs{\delta_{right}f(-j)} \right)^q \frac{1}{j^{ (\alpha-1) q +1}}
                            \right)^{\frac{1}{q}}<\infty, 
    \end{align*}
 so $\frac{f}{1+\abs{\cdot}^{\alpha}} \in \ell^q(\ZZ, \mu)$. Suppose that $\alpha \in [2k+1,2k+3)$ with $k \in \NN_{0}$ (being $\alpha \neq 1$). By
     using \Cref{lemma:inequality_interchange_derivative}(i) and \Cref{remark:amount_of_derivatives_on_t} (in case $\alpha \in [2k+1,2k+2$)) we have that
    \[
        \begin{aligned}
            \norm{t^{k+2-\frac{\alpha}{2}}\partial_t^{k+2} e^{t \Delta_d}f}_{p,q} 
                & =     \norm{t^{k+\frac{3}{2}-\frac{\alpha}{2}}\partial_t^{k+1} \delta_{right} e^{t \Delta_d} f }_{p,q} \\
                & \leq  C \norm{t^{k+1-\frac{\alpha-1}{2}}\partial_t^{k+1} e^{t \Delta_d} \delta_{right}f }_{p,q} < \infty.
        \end{aligned}
    \]
   We conclude that $f \in \Lambda_H^{\alpha,p,q}$.
\end{proof}

\begin{theorem}
    \label{theorem:step_down_lambda_P_space}
    Let $ \alpha > 1$, $1 \leq p,q \leq \infty$ and $f \in \Lambda_P^{\alpha,p,q}.$ Then,
    $\delta_{right}f \in \Lambda_P^{\alpha-1,p,q}$.
\end{theorem}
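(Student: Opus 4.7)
The plan is to verify the two conditions that define $\Lambda_P^{\alpha-1,p,q}$ for the function $\delta_{right}f$: the weighted growth condition and the mixed-norm estimate on the Poisson semigroup derivatives.

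First, for the growth condition, I would observe that $|\delta_{right}f(n)| \leq |f(n)|+|f(n+1)|$, and that the weights $\frac{1}{1+|n|^2}$ and $\frac{1}{1+|n+1|^2}$ are comparable uniformly in $n\in\ZZ$. Therefore, after a shift of index, the hypothesis $\sum_{n\in\ZZ}\frac{|f(n)|}{1+|n|^2}<\infty$ (built into $f\in\Lambda_P^{\alpha,p,q}$) gives immediately that $\sum_{n\in\ZZ}\frac{|\delta_{right}f(n)|}{1+|n|^2}<\infty$.

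Next, for the semigroup condition, set $l:=[\alpha]+1$ and $l':=[\alpha-1]+1$. A short case analysis shows that $l'=l-1$ in both cases ($\alpha\in\NN$ and $\alpha\notin\NN$). Since $\delta_{right}$ commutes with $\partial_y$ and with the Poisson semigroup, the object to control is
\[
\bigl\|y^{l'-(\alpha-1)}\partial_y^{l'} e^{-y\sqrt{-\Delta_d}}\delta_{right}f\bigr\|_{p,q}=\bigl\|y^{l-\alpha}\partial_y^{l-1}\delta_{right} e^{-y\sqrt{-\Delta_d}}f\bigr\|_{p,q}.
\]
Now I would apply Lemma \ref{lemma:inequality_interchange_derivative}(ii) in the case $m\in\NN$, taking $m=1$, replacing $l$ there by $l-1$, and choosing $\beta:=l-\alpha$. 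Note $\beta>0$ since $l=[\alpha]+1>\alpha$, and the hypothesis $\sum_{j\in\ZZ}\frac{|f(j)|}{1+|j|^2}<\infty$ needed by that lemma is available. The lemma then yields
\[
\bigl\|y^{l-\alpha}\partial_y^{l-1}\delta_{right} e^{-y\sqrt{-\Delta_d}}f\bigr\|_{p,q}\leq C\bigl\|y^{l-\alpha}\partial_y^{l} e^{-y\sqrt{-\Delta_d}}f\bigr\|_{p,q},
\]
and the right-hand side is finite because $f\in\Lambda_P^{\alpha,p,q}$.

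The only delicate point is the correct bookkeeping $l'=l-1$ and the choice $\beta=l-\alpha>0$, which guarantees the applicability of Lemma \ref{lemma:inequality_interchange_derivative}(ii) so as to trade the discrete difference $\delta_{right}$ for one extra $y$-derivative. Beyond this, the argument is essentially a single invocation of that lemma, so no significant obstacle is expected.
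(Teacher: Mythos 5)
Your proposal is correct and follows essentially the same route as the paper: the growth condition is transferred to $\delta_{right}f$ by the triangle inequality and comparability of weights, and the semigroup condition is obtained by commuting $\delta_{right}$ past the Poisson semigroup and applying Lemma \ref{lemma:inequality_interchange_derivative}(ii) with $m=1$ to trade the discrete difference for one $y$-derivative, exactly as in the paper (which writes $l_1=[\alpha]+1$, $l_2=[\alpha]$ and uses $l_2-(\alpha-1)=l_1-\alpha$). Your bookkeeping $l'=l-1$ and the choice $\beta=l-\alpha>0$ are both correct.
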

\begin{proof}
    Since $f \in \Lambda_P^{\alpha,p,q}$ we have that $\displaystyle\sum_{n \in \ZZ} \frac{\abs{f(n)}}{1+\abs{n}^2}<\infty$, so it is clear that 
    $\displaystyle\sum_{n \in \ZZ} \frac{\abs{\delta_{right} f(n)}}{1+\abs{n}^2}<\infty$.

    Let $l_1 = [\alpha] + 1$ and $l_2 =[\alpha-1]+1= [\alpha]$. Then, by using \Cref*{lemma:inequality_interchange_derivative}(ii) we have that
    \[
        \begin{aligned}
            \norm{y^{l_2-(\alpha-1)} \partial_y^{l_2} e^{-y\sqrt{-\Delta_d}} \delta_{right} f}_{p,q} 
                & =     \norm{y^{l_2-(\alpha-1)} \partial_y^{l_2} \delta_{right} e^{-y\sqrt{-\Delta_d}} f}_{p,q} \\
                & \leq  C \norm{y^{l_1-\alpha} \partial_y^{l_1} e^{-y\sqrt{-\Delta_d}} f}_{p,q} < \infty.
        \end{aligned}
    \]
   We conclude that $\delta_{right}f \in \Lambda_P^{\alpha-1,p,q}$.
\end{proof}

Finally, we can prove our main theorem.\\

{\it Proof of \Cref{theorem:equal_spaces_general_case}.}
    We prove first \emph{(A1)}. In \Cref*{theorem:equal_spaces_0_1} we have proved the result for $0 < \alpha < 1$. Let $k < \alpha < k+1$,
    for certain $k \in \NN$, and  assume that $f \in \Lambda_H^{\alpha,p,q}$. Then, by applying $k$ times \Cref*{theorem:step_down_up_lambda_H_space}
    we get that $\delta_{right}^k f \in \Lambda_H^{\alpha-k,p,q}$ and in virtue of \Cref*{theorem:equal_spaces_0_1} and the definition of $C^{\alpha-k,p,q}$
    we get that $f \in C^{\alpha,p,q}(\ZZ)$.

    Conversely, suppose that $f \in C^{\alpha,p,q}(\ZZ)$, $\alpha>1$, $\alpha\not\in\NN$. Then, from \Cref{lemma:C_space_bound_growth} we know that $\frac{{f}}{1+\abs{\cdot}^\alpha} \in \ell^q(\ZZ, \mu)$.
     Moreover, the definition of the space gives that $\delta_{right}^k f \in C^{\alpha-k,p,q}$,
    and \Cref{theorem:equal_spaces_0_1} implies that $\delta_{right}^k f \in \Lambda_H^{\alpha-k,p,q}$.
    By applying $k$ times \Cref*{theorem:step_down_up_lambda_H_space} we conclude that $f \in \Lambda_H^{\alpha,p,q}$.

    Regarding the proof of \emph{(A2)}, we proceed as in the proof of \emph{(A1)} but we use \Cref{theorem:equal_spaces_0_2}
    (see \Cref{remark:some_semigroup_properties_work_fine}) instead of \Cref{theorem:equal_spaces_0_1}.

    In virtue of \Cref{prop:Lambda_H_implies_Lambda_P} and \emph{(A1)}, to establish \emph{(B)} we only need to prove that if
    $f \in \Lambda_P^{\alpha,p,q}$ then $f \in C^{\alpha,p,q}(\ZZ)$. Let $f \in \Lambda_P^{\alpha,p,q}$. By applying $k$ times
    \Cref{theorem:step_down_lambda_P_space} we get that $\delta_{right}^k f \in \Lambda_P^{\alpha-k,p,q}$ and from \Cref{theorem:equal_spaces_0_1}
    and the definition of $C^{\alpha-k,p,q}(\ZZ)$ we conclude that $f \in C^{\alpha,p,q}(\ZZ)$.

    Regarding the proof of \emph{(B2)}, we proceed as in the proof of \emph{(B1)} but we use \Cref{theorem:equal_spaces_0_2}
    instead of \Cref{theorem:equal_spaces_0_1}.
\edproof

\section{Applications}\label{Applications}

In this section, we shall prove regularity results for Bessel potentials and fractional powers of the discrete Laplacian in the Besov spaces defined through
the heat semigroup. To this aim, we recall the definition of the fractional powers of the discrete Laplacian by using the semigroup method,
see \cite{CiaurriRoncalStingaTorreaVarona2018}.

Let $I$ denote the identity operator. For good enough functions $f:\ZZ\to\RR$, we define the following operators:

\begin{itemize}
    \item The Bessel potential of order $\beta>0$,
            \[
                \left(I-\Delta_d\right)^{-\beta / 2} f(n) = 
                    \frac{1}{\Gamma(\beta / 2)} 
                    \int_0^{\infty} e^{-\tau(I-\Delta_d)} f(n) \tau^{\beta / 2} \frac{d \tau}{\tau}, \quad n \in \ZZ.
            \]
    \item The positive fractional power of the Laplacian,
            \[
                \left(-\Delta_d\right)^{\beta} f(n) = 
                    \frac{1}{c_\beta}
                    \int_0^{\infty} 
                        \left(e^{\tau \Delta_d}-I\right)^{[\beta]+1} f(n) \frac{d \tau}{\tau^{1+\beta}}, \quad n \in \ZZ, \quad \beta>0,
            \]
          where $c_\beta=\int_0^{\infty}\left(e^{-\tau}-1\right)^{[\beta]+1} \frac{d \tau}{\tau^{1+\beta}}$.
    \item The negative fractional power of the Laplacian,
            \[
                \left(-\Delta_d\right)^{-\beta} f(n) = 
                    \frac{1}{\Gamma(\beta)}
                    \int_0^{\infty}
                        e^{\tau \Delta_d} f(n) \frac{d \tau}{\tau^{1-\beta}}, \quad n \in \ZZ, \quad 0<\beta<\frac{1}{2}.
            \]
\end{itemize}

The previous formulae come from the following gamma formulae, see \cite{CiaurriRoncalStingaTorreaVarona2018},
\[
    \lambda^{-\beta} = \frac{1}{\Gamma(\beta)} \int_0^{\infty} e^{-\lambda t} t^\beta \frac{dt}{t},
    \quad \text { and }
    \lambda^\beta=\frac{1}{c_\beta} \int_0^{\infty}\left(e^{-\lambda t}-1\right)^{[\beta]+1} \frac{dt}{t^{1+\beta}},    
\]
where $\beta>0$ and $\lambda$ is a complex number with $\Re \lambda \geq 0$.

As it was shown in \cite[Theorem 1.2]{AbadiasDeLeonContreras2022}, Bessel potentials of order $\beta>0$ are well defined for
$f \in \Lambda_H^{\alpha,\infty,\infty}, \alpha>0$. However, the fractional powers of the Laplacian, $\left(-\Delta_d\right)^{\pm \beta}$,
are not well defined in general for $\Lambda_H^{\alpha,\infty,\infty}$ functions and an additional condition is needed.
In \cite{CiaurriRoncalStingaTorreaVarona2018}, the authors assumed that the functions belong to the space
\[
    \ell_{\pm \beta}:=\left\{u: \ZZ \to \RR: \sum_{m \in \ZZ} \frac{|u(m)|}{(1+|m|)^{1 \pm 2 \beta}}<\infty\right\},    
\]
in order to define $\left(-\Delta_d\right)^{\pm \beta} f$, where $0<\beta<1$ in the case of the positive powers and $0<\beta<1 / 2$
for the negative ones. The choice of these spaces is justified since the discrete kernel in the pointwise formula
\begin{equation}
    \label{eq:fractionalLaplacianConvolution}
    \left(-\Delta_d\right)^{\pm \beta} f(n)=\sum_{m \in \ZZ} K_{\pm \beta}(n-m) f(m), n \in \ZZ,
\end{equation}
satisfies $K_\beta(m) \sim \frac{1}{|m|^{1+2 \beta}}$, whenever $0<\beta<1$ and $K_{-\beta}(m) \sim \frac{1}{|m|^{1-2 \beta}}$,
for $0<$ $\beta<1 / 2$, see \cite{CiaurriRoncalStingaTorreaVarona2018}. Observe that the negative powers of the Laplacian are only
well defined for $0<\beta<1 / 2$, since the integral that defines it is not absolutely convergent for $\beta \geq 1 / 2$.

In this section, in order to study regularity properties for positive powers larger than 1, we proceed as in \cite{AbadiasDeLeonContreras2022},
by extending the $\ell_\beta$ spaces for $\beta>0$, and working with the following extended kernel,
\[
    K_\beta(n):= \begin{cases}
                    0, & |n|-\beta-1 \in \mathbb{N}_0, \\
                    \frac{(-1)^{|n|} \Gamma(2 \beta+1)}{\Gamma(1+\beta+|n|) \Gamma(1+\beta-|n|)}, & \text { otherwise. }
                \end{cases}    
\]
For any $\beta >0,$ the kernel satisfies the same asymptotic estimates, that is, $K_\beta(m) \sim \frac{1}{|m|^{1+2 \beta}}.$ Note that when $\beta \in \mathbb{N}_0$, then $K_\beta(n)=0$ for all $|n| \geq \beta+1$. In fact, in
\cite[Lemma 4.1]{AbadiasDeLeonContreras2022} it was proved that if $f \in \ell_\beta$ then $\left(-\Delta_d\right)^{\beta} f$ is
well defined for $\beta >0$ and the identity \eqref{eq:fractionalLaplacianConvolution} holds. The cases $-1/2<\beta<0$ and $0<\beta<1$ had been proved previously in \cite{CiaurriRoncalStingaTorreaVarona2018}.

Now, we prove our main results of this section.
\vspace{0.5 cm}

{\it Proof of Theorem \ref{theorem:BesselPotentials}.}
    Let $f \in \Lambda_H^{\alpha,p,q}$ and $k = \left[ \frac{\alpha+\beta}{2}\right] + 1$. Notice that since $f \in \Lambda_H^{\alpha,p,q}$
    we have that $\frac{f}{1+\abs{\cdot}^{\alpha+\frac{1}{q}}} \in \ell^\infty(\ZZ),$ so by \cite[Lemma 2.12]{AbadiasDeLeonContreras2022}
    we have that 
    \[
        \abs{\left( I - \Delta_d\right)^{-\beta / 2} f(n)} \leq
            C \int_0^\infty \! e^{-\tau} \left(1+\abs{n}^{\alpha+1/q} + \tau^{\frac{\alpha + 1/q}{2}}\right) \tau^{\beta/2} \frac{d \tau}{\tau}
            \leq C (1 + \abs{n}^{\alpha + 1/q}), \quad  n \in \ZZ.
    \]
    This proves that the Bessel potential is well-defined.
    Moreover, from  \Cref{lemma:decay_heat_semigroup} \textit{(1)}  and Minkowski's integral inequality we have
    \[
        \norm{\frac{(I-\Delta_d)^{-\beta/2} f}{1+\abs{\cdot}^\alpha}}_{q,\mu} \leq 
            C \int_0^\infty e^{-\tau} \norm{\frac{e^{\tau} \Delta_d f}{1+\abs{\cdot}^\alpha}}_{q,\mu} \tau^{\beta/2} \frac{d \tau}{\tau} \leq
            C \int_0^\infty e^{-\tau} (1+ \tau^{\frac{\alpha+1/q}{2}}) \tau^{\beta/2} \frac{d \tau}{\tau} < \infty.
    \]
Since $  \norm{\frac{(I-\Delta_d)^{-\beta/2} f}{1+\abs{\cdot}^\alpha}}_{q,\mu}<\infty  $ implies that   $\norm{\frac{(I-\Delta_d)^{-\beta/2} f}{1+\abs{\cdot}^{\alpha+\beta}}}_{q,\mu}<\infty$, the size condition is satisfied. 

    Finally, we prove the semigroup condition. First note that
    \[
        \begin{aligned}
            \norm{\partial_t^k e^{t \Delta_d} (I-\Delta_d)^{-\beta / 2} f}_p & =   \left(
                                                                                        \sum_{n \in \ZZ}
                                                                                            \abs{
                                                                                                \frac{1}{\Gamma(\beta/2)}
                                                                                                \int_0^\infty
                                                                                                    e^{-\tau}
                                                                                                    \partial_t^k e^{t\Delta_d}(e^{\tau\Delta_d}f)(n)
                                                                                                    \tau^{\beta/2} \frac{d \tau}{\tau}
                                                                                            }^p
                                                                                    \right)^{\frac{1}{p}} \\
                                                                             & \leq \frac{1}{\Gamma(\beta/2)}
                                                                                    \int_0^\infty
                                                                                        e^{-\tau}
                                                                                        \norm{\partial_{w}^k e^{w\Delta_d}f\Big|_{w=t+\tau}}_p
                                                                                        \tau^{\beta/2} \frac{d \tau}{\tau} \\
                                                                             & \leq \frac{1}{\Gamma(\beta/2)}
                                                                                    \int_t^\infty
                                                                                        \norm{\partial_u^k e^{u \Delta_d}f}_p
                                                                                        (u-t)^{\beta/2} \frac{du}{u-t}, \quad \text{ for } 1 \leq p < \infty, 
        \end{aligned}
    \]
  and the same inequality holds for $p=\infty$ directly {(one can intertwine the operator $\partial_t^k e^{t \Delta_d}$ and the integral in previous estimates as it is shown in \cite[Proof of Theorem 1.2]{AbadiasDeLeonContreras2022}).}
    
    Now we use \Cref{Hardy_convolution_inequality} for $1 \leq q < \infty$ and Remark \ref{remark:amount_of_derivatives_on_t} to obtain
    \[
        \begin{aligned}
            \norm{t^{k-\frac{\alpha+\beta}{2}} \partial_t^k e^{t \Delta_d}(I-\Delta_d)^{-\beta/2}f}_{p,q}
                & \leq  C
                    \left( 
                        \int_0^\infty
                            \left(
                                t^{k-\frac{\alpha+\beta}{2}} \!
                                \int_t^\infty
                                    \norm{\partial_u^k e^{u \Delta_d}f}_p \!
                                     \frac{(u-t)^{\beta/2}}{u-t} du
                            \right)^q
                            \frac{dt}{t}
                    \right)^{\frac{1}{q}} \\
                & \leq C
                    \left( 
                        \int_0^\infty
                            \left( 
                                t^{k-\frac{\alpha}{2}}
                                \norm{\partial_t^k e^{t \Delta_d}f}_p
                            \right)^q
                            \frac{dt}{t}
                    \right)^{\frac{1}{q}} \\
                & = C  \norm{t^{k-\frac{\alpha}{2}} \partial_t^k e^{t \Delta_d}f }_{p,q}< \infty.
        \end{aligned}
    \]

    The case $q= \infty$ follows analogously, that is,
    \[
        \begin{aligned}
            \norm{t^{k-\frac{\alpha+\beta}{2}} \partial_t^k e^{t \Delta_d}(I-\Delta_d)^{-\beta/2}f}_{p,\infty}
                & \leq  C
                    \sup_{t>0}
                        t^{k-\frac{\alpha+\beta}{2}}
                        \int_t^\infty
                            \norm{\partial_u^k e^{u \Delta_d}f}_p
                            \frac{(u-t)^{\beta/2}}{u-t} du \\
                & \leq C 
                    \sup_{t>0} 
                        t^{k-\frac{\alpha}{2}} \norm{\partial_t^k e^{t \Delta_d}f}_p
                        \int_0^1 (1-y)^{\frac{\beta}{2}-1} y^{k-\frac{\alpha+\beta}{2}-1} \, dy \\
                & \leq C \sup_{t>0} t^{k-\frac{\alpha}{2}} \norm{\partial_t^k e^{t \Delta_d}f}_p
                  = C \norm{t^{k-\frac{\alpha}{2}} \partial_t^k e^{t \Delta_d}f}_{p,\infty} < \infty. 
        \end{aligned}
    \]
\edproof

\\
\vspace{0.5 cm}

{\it Proof of Theorem \ref{theorem:SchauderEstimates}.}
    Notice that $f \in \ell_{-\beta}$ implies that $\left(-\Delta_d\right)^{-\beta} f $ is well defined and \eqref{eq:fractionalLaplacianConvolution}
    holds (see \cite{CiaurriRoncalStingaTorreaVarona2018}). Now we check that $\frac{\left(-\Delta_d\right)^{-\beta} f}{1+\abs{\cdot}^{\alpha+2\beta}} \in \ell^q\left(\ZZ, \mu\right)$.

    If $q=\infty$, then $f \in \Lambda_H^{\alpha,p,\infty}\subset \Lambda_H^{\alpha,\infty,\infty}$ 
 and therefore $\frac{\left(-\Delta_d\right)^{-\beta} f}{1+\abs{\cdot}^{\alpha+2\beta}} \in \ell^\infty\left({\ZZ}\right)$, see \cite{AbadiasDeLeonContreras2022}.

    Let $1 \leq q < \infty$. In virtue of\eqref{eq:fractionalLaplacianConvolution}, we have to prove that 
    \[
        S:= \left(
                \sum_{n \in \ZZ} \abs{\sum_{j \in \ZZ} \frac{K_{-\beta}(n-j)f(j)}{1+\abs{n}^{\alpha+2\beta}}}^q \frac{1}{1+\abs{n}}
            \right)^{\frac{1}{q}}   <\infty.
    \]
    By using the bounds for $K_{-\beta}$ and the inverse triangle inequality we can split our sum into 3 different parts,
    \[
        \begin{aligned}
            S & \leq C  \left(
                        \sum_{n = 1}^\infty 
                            \left(
                                \sum_{j =1}^\infty
                                    \frac{\abs{f(j)}}{n^{\alpha+2\beta}\left(1+\abs{n-j}^{1-2\beta}\right)}
                            \right)^q
                            \frac{1}{n}
                        \right)^{\frac{1}{q}}
                +   C   \left(
                            \sum_{n = 1}^\infty 
                                \left(
                                    \frac{\abs{f(0)}}{n^{\alpha+1}}
                                \right)^q
                                \frac{1}{n}
                        \right)^{\frac{1}{q}} \\
              & +   C   \left(
                            \sum_{n = 1}^\infty 
                                \left(
                                    \sum_{j = 1}^\infty
                                        \frac{\abs{f(-j)}}{n^{\alpha+2\beta}\left(1+\abs{n-j}^{1-2\beta}\right)}
                                \right)^q
                                \frac{1}{n}
                        \right)^{\frac{1}{q}}+C\sum_{j\in\ZZ}\frac{|f(j)|}{1+|j|^{1-2\beta}}.
                \\
                &= S_1+ S_2 + S_3+S_4.
        \end{aligned}  
    \]
It is clear that $S_2,S_4 < \infty$.   We will estimate $S_1$ and $S_3$  together  by naming as  $a_j$ both
    $|f(j)|$ and $|f(-j)|$. On the one hand, $\frac{f}{1+\abs{\cdot}^\alpha} \in \ell^q(\ZZ,\mu)$ implies that $\frac{f}{1+\abs{\cdot}^{\alpha+1/q}} \in \ell^q(\ZZ)$. Thus, by {using the $\ell^q(\NN_0)$-boundedness of the discrete Ces\`aro operator, see \cite[Theorem 7.2]{AbadiasMiana}, for $1<q<\infty$ we get
    \begin{align*}
        \left(
            \sum_{n = 1}^\infty 
                \left(
                    \sum_{j =1}^{n-1}
                        \frac{a_j}{n^{\alpha+2\beta}(n-j)^{1-2\beta}}
                \right)^q
                \frac{1}{n}
            \right)^{\frac{1}{q}}&\le \left(
            \sum_{n = 1}^\infty 
                \left(\frac{1}{n^{2\beta}}
                    \sum_{j =1}^{n-1}
                        \frac{a_j}{j^{\alpha+1/q}(n-j)^{1-2\beta}}
                \right)^q
            \right)^{\frac{1}{q}}
       \\& \leq C \norm{ \frac{a_j}{|\cdot|^{\alpha+1/q}}}_q< \infty.
    \end{align*}}
    For the case $q=1$ we use Tonelli's theorem to obtain
    \[
        \begin{aligned}
            \sum_{n = 1}^\infty 
                \sum_{j =1}^{n-1}
                    \frac{a_j}{n^{\alpha+2\beta+1}(n-j)^{1-2\beta}}
            & = 
            \sum_{j = 1}^\infty 
                a_j
                \sum_{n =j+1}^{\infty}
                    \frac{1}{n^{\alpha+2\beta+1}(n-j)^{1-2\beta}} \\
            &\le \sum_{j = 1}^\infty\frac{a_j}{j^{\alpha+2\beta+1}}\sum_{n=j+1}^{2j}\frac{1}{(n-j)^{1-2\beta}}+\sum_{j = 1}^\infty\frac{a_j}{j^{1-2\beta}}\sum_{n=2j+1}^{\infty}\frac{1}{n^{\alpha+2\beta+1}}\\
            &\le \sum_{j = 1}^\infty\frac{a_j}{j^{\alpha+2\beta+1}}\int_{0}^{j}\frac{dx}{x^{1-2\beta}}+C<\infty,
        \end{aligned}
    \]
    where we have used that $\frac{f}{1+\abs{\cdot}^\alpha} \in \ell^1(\ZZ,\mu)$ and {$f\in \ell_{-\beta}$.}

    On the other hand, by {using the $\ell^q(\NN_0)$-boundedness of the discrete adjoint Ces\`aro operator, see \cite[Theorem 7.2]{AbadiasMiana}, for $1<q<\infty$, we get}
    \[
        \begin{aligned}
            \left(
                \sum_{n = 1}^\infty 
                    \left(
                        \sum_{j =n}^{2n}
                            \frac{a_j }{n^{\alpha+2\beta}(1+(j-n)^{1-2\beta})}
                    \right)^q
                    \frac{1}{n}
            \right)^{\frac{1}{q}}
            & \leq \left(\sum_{n = 1}^\infty 
                \left(\frac{a_n}{n^{\alpha+2\beta+1/q}}\right)^q\right)^{1/q}\\&+C  {\left(
                        \sum_{n = 1}^\infty 
                            \left(
                                \sum_{j =n+1}^{2n}
                                    \frac{a_j}{j^{\alpha+1/q}}\frac{1}{j^{2\beta}(j-n)^{1-2\beta}}
                            \right)^q
                    \right)^{\frac{1}{q}} }\\
            &{\le C \norm{ \frac{a_j}{|\cdot|^{\alpha+1/q}}}_q< \infty.}
        \end{aligned}
    \]

     When $q=1$ we use again  Tonelli's theorem so that
    \[
        \begin{aligned}
            \sum_{n = 1}^\infty 
                \sum_{j = n}^{2n}
                    \frac{a_j}{n^{\alpha+2\beta+1}(1+(j-n)^{1-2\beta})}
            & = \sum_{n = 1}^\infty 
                \frac{a_n}{n^{\alpha+2\beta+1}}+
            \sum_{j = 2}^\infty 
                a_j
                \sum_{ \frac{j}{2} \leq n  \leq j-1}
                    \frac{1}{n^{\alpha+2\beta+1}(j-n)^{1-2\beta}} \\
            & \leq C+C
            \sum_{j = 2}^\infty 
                \frac{a_j}{j^{\alpha+1}}
                \sum_{1 \leq m \leq \frac{j}{2}}
                    \frac{1}{m^{1-2\beta}(j-m)^{2\beta}}\\
                    &\le C+C\sum_{j = 2}^\infty 
                \frac{a_j}{j^{\alpha+2\beta+1}}
                \sum_{1 \leq m \leq \frac{j}{2}}
                    \frac{1}{m^{1-2\beta}}\\
                    &\le C+C\sum_{j = 2}^\infty 
                \frac{a_j}{j^{\alpha+2\beta+1}}
                \int_{0}^{\frac{j}{2}}
                    \frac{dx}{x^{1-2\beta}}<\infty,
        \end{aligned}
    \] where we have used that $\frac{f}{1+\abs{\cdot}^\alpha} \in \ell^1(\ZZ,\mu)$.

   Finally, observe that if $j \geq 2n+1$ we have that $\frac{1}{j-n} < \frac{2}{j}$, so by using the fact that $a_j \in \ell_{-\beta}$ we get
    \[
        \left(
            \sum_{n = 1}^\infty 
                \left(
                    \sum_{j =2n+1}^{\infty}
                        \frac{a_j}{1+(j-n)^{1-2\beta}}
                \right)^q
                \frac{1}{n^{q(\alpha+2\beta)+1}}
        \right)^{\frac{1}{q}}
        \leq C  \left(
                    \sum_{n = 1}^\infty 
                        \frac{1}{n^{q(\alpha+2\beta)+1}}
                \right)^{\frac{1}{q}}
        < \infty.
    \]

    It remains to check that $\norm{t^{k-\frac{\alpha+2\beta}{2}} \partial_t^k e^{t \Delta_d} (-\Delta_d)^{-\beta}f}_{p,q} < \infty$
    with $k = \left[ \frac{\alpha+2\beta}{2} \right]+ 1 $. 
    As in the proof of \Cref{theorem:BesselPotentials}, we observe that
    \[
        \begin{aligned}
            \norm{\partial_t^k e^{t \Delta_d} (-\Delta_d)^{-\beta}f}_{p} & = 
                \left(
                    \sum_{n \in \ZZ}
                        \abs{
                            \frac{1}{\Gamma(\beta)}
                            \int_{0}^\infty
                                \partial_w^k e^{w\Delta_d}f\Big|_{w=t + \tau}(n) 
                                \frac{d \tau}{\tau^{1-\beta}}
                        }^p
                \right)^{\frac{1}{p}} \\
            & \leq  \frac{1}{\Gamma(\beta)} 
                \int_t^\infty
                    \norm{\partial_t^k e^{u \Delta_d}f}_{p}
                    \frac{(u-t)^\beta}{(u-t)} du, \quad 1<q<\infty.
        \end{aligned}
    \]
    By using \Cref{Hardy_convolution_inequality} 
    and \Cref{remark:amount_of_derivatives_on_t} we have that
    \[
        \begin{aligned}
            \norm{t^{k-\frac{\alpha+2\beta}{2}} \partial_t^k e^{t \Delta_d} (-\Delta_d)^{-\beta}f}_{p,q} 
                & \leq C \norm{t^{k-\frac{\alpha}{2}} \partial_t^k e^{t \Delta_d} f}_{p,q} < \infty.
        \end{aligned}
    \]
    The case $q=\infty$ follows similarly.
\edproof
\vspace{0.5 cm}

{\it Proof of Theorem \ref{theorem:HolderEstimates}.}
    Notice that if $\beta \in \NN$ the integral definition of the fractional Laplacian coincides with the $\beta$ times composition of the operator
    (see \cite[Remark 4.2]{AbadiasDeLeonContreras2022}), and as $(-\Delta_d) f(n) = - \delta_{right}^{2} f(n-1)$, $n\in\ZZ,$ we can apply 
    \Cref{theorem:step_down_up_lambda_H_space} to prove epigraph $\emph{(2)}$ and epigraph $\emph{(1)}$  when $\beta \in \NN$.

    Now consider $\beta \notin \NN$ and  let $k = \left[ \frac{\alpha-2\beta}{2}\right] +1$. Observe that the study of the size condition follows the same steps as in the last proof but
    in this case with $2\beta$ instead of $-2\beta$. It remains to prove that
    $\norm{t^{k-\frac{\alpha-2\beta}{2}} \partial_t^k e^{t \Delta_d} (-\Delta_d)^\beta f}_{p,q}<\infty$.
 If $1 \leq p < \infty $,
  by using Minkowski's integral inequality twice, we get
    \begin{align*}
        \norm{\partial_t^k e^{t\Delta_d}(-\Delta_d)^\beta f}_p  & =
      \norm{ \frac{1}{c_\beta}\partial_t^k e^{t\Delta_d}\left(\int_0^\infty\int_{[0,\tau]^l}
                                                                                            \partial_{\nu}^{l}  e^{\nu \Delta_d}f\Big|_{\nu=s_1+\dots+s_l}
                                                                                    d(s_1,\dots,s_l)
                                                                            \frac{d\tau}{\tau^{1+\beta} }\right)}_p\\
                                                                            &\leq    \frac{1}{c_\beta}
                                                                        \int_0^\infty
                                                                                \int_{[0,\tau]^l}
                                                                                        \norm{
                                                                                            \partial_{\nu}^{k+l}  e^{\nu \Delta_d}f\Big|_{\nu=t+s_1+\dots+s_l}
                                                                                        }_p
                                                                                    d(s_1,\dots,s_l)
                                                                            \frac{d\tau}{\tau^{1+\beta}},
    \end{align*}
    where $l = [\beta]+1$. In an analogous way the inequality  holds for $p=\infty$.
    Now we are going to focus on the integral over $[0,\tau]^l$. If $\beta \in (0,1)$, we apply Tonelli's theorem to obtain
    \[
        \begin{aligned}
            \frac{1}{c_{\beta}} 
            \int_0^\infty 
                \int_0^\tau  \norm{\partial_\nu^{k +1} e^{ \nu\Delta_d}f\Big|_{\nu=t+s_1}}_p ds_1 \frac{d\tau}{\tau^{1+\beta}}
            & =
            \frac{1}{c_{\beta}} 
            \int_0^\infty  \norm{\partial_\nu^{k +1} e^{ \nu\Delta_d}f\Big|_{\nu=t+s_1}}_p
                \int_{s_1}^{\infty} \frac{d\tau}{\tau^{1+\beta}} ds_1 \\
            & = \frac{1}{c_{\beta}}
            \int_0^\infty \norm{\partial_\nu^{k +1} e^{ \nu\Delta_d}f\Big|_{\nu=t+s_1}}_p\frac{s_1^{1-\beta}}{s_1} \, ds_1\\
            & = \frac{1}{c_{\beta}}
            \int_t^\infty  \norm{\partial_u^{k +1} e^{ u\Delta_d}f}_p \frac{(u-t)^{1-\beta}}{u-t} \, du.
        \end{aligned}
    \]
    For $\beta > 1$, instead of integrating over $[0,\tau]^l$, we will compute the volume integral
    under the hyperplane which will be indeed
    bigger than our original integral, but easier to calculate. In order to do that we introduce some notation.
    We denote by $s = (s_1,s_2, \dots, s_l) \in \RR^{l}$ and by $s'= (s_2,\dots,s_l) \in \RR^{l-1}$ with $s_i \geq 0, \forall i=1,\dots,l$.
    We define the set $K_l(\theta)$ as $K_l(\theta) = \{ s \in \RR^{l} : 0 \leq s_1+\dots + s_l \leq \theta\}$. Hence, using Tonelli's theorem
    we have that
    \[
        \begin{aligned}
            \int_{[0,\tau]^l} \!\!
         \norm{  \partial_{\nu}^{k+l}  e^{\nu \Delta_d}f\Big|_{\nu=t+s_1+\dots+s_l}}_p \hspace{-0.15cm}ds&
       \leq 
        \int_{s \in K_l(l \tau)} 
             \norm{  \partial_{\nu}^{k+l}  e^{\nu \Delta_d}f\Big|_{\nu=t+s_1+\dots+s_l}}_p ds \\
        & = \hspace{-0.1cm}\int_{s' \in K_{l-1}(l \tau)} \!\! \int_{0}^{l \tau - (s_2 + \cdots + s_l)} \!  \! \norm{  \partial_{\nu}^{k+l}  e^{\nu \Delta_d}f\Big|_{\nu=t+s_1+\dots+s_l}}_p\hspace{-0.17cm}ds_1 ds' \\
        & = \int_{s' \in K_{l-1}(l \tau)} \int_{s_2 + \cdots + s_l}^{l \tau}
           \norm{  \partial_{\nu}^{k+l}  e^{\nu \Delta_d}f\Big|_{\nu=t+u}}_pdu ds' \\
        & = \int_0^{l \tau}  \norm{  \partial_{\nu}^{k+l}  e^{\nu \Delta_d}f\Big|_{\nu=t+u}}_p 
            \int_{s' \in K_{l-1}(u)} ds' \, du \\
        & \le C\int_0^{l \tau}  \norm{  \partial_{\nu}^{k+l}  e^{\nu \Delta_d}f\Big|_{\nu=t+u}}_p u^{l-1} \, du.
        \end{aligned}
    \]
  Therefore,
    \[
        \begin{aligned}
            \norm{\partial_t^k e^{t\Delta_d}(-\Delta_d)^\beta f}_p  & \leq  C
                                                                            \int_0^\infty
                                                                                \int_0^{l \tau}
                                                                                    \norm{  \partial_{\nu}^{k+l}  e^{\nu \Delta_d}f\Big|_{\nu=t+u}}_p 
                                                                                    u^{l-1} \, du
                                                                                \frac{d\tau}{\tau^{1+\beta}} \\
                                                                    & =    C
                                                                            \int_0^\infty
                                                                                \norm{  \partial_{\nu}^{k+l}  e^{\nu \Delta_d}f\Big|_{\nu=t+u}}_p 
                                                                                \int_\frac{u}{l}^{\infty}
                                                                                    \frac{d\tau}{\tau^{1+\beta}}
                                                                                u^{l-1} \, du \\
                                                                       &=C
                                                                            \int_0^\infty
                                                                                \norm{  \partial_{\nu}^{k+l}  e^{\nu \Delta_d}f\Big|_{\nu=t+u}}_p 
                                                                                \frac{u^{l-\beta}}{u}  du\\
                                                                                  & =C
                                                                            \int_t^\infty
                                                                                \norm{ \partial_{\nu}^{k+l}  e^{\nu \Delta_d}f}_p \frac{(\nu-t)^{l-\beta}}{\nu-t} \, d\nu.
        \end{aligned}
    \]
    When $1 \leq q < \infty$, we can use \Cref{Hardy_convolution_inequality} and \Cref{remark:amount_of_derivatives_on_t} to get 
    \[
        \begin{aligned}
            \norm{t^{k-\frac{\alpha-2\beta}{2}} \partial_t^k e^{t \Delta_d} (-\Delta_d)^{\beta}f}_{p,q} 
                & \leq C \norm{t^{k+l-\frac{\alpha}{2}} \partial_t^{k+l} e^{t \Delta_d} f}_{p,q}< \infty. 
        \end{aligned}
    \]
 The case $q=\infty$ follows by using similar ideas to the ones in the proof of \Cref{theorem:BesselPotentials}.

\edproof

\newpage

\section{Appendix}\label{Appendix}

In this appendix we collect some known Hardy type inequalities that we use several times along the paper.

Next inequalities can be found in \cite[Chapter IX, Section 9.9, Theorem 329, Eq. (9.9.8) and (9.9.9)]{HardyLittlewoodPolya1952}, \cite{Muckenhoupt} and \cite[A.4, Appendix A]{Stein1970}.

\begin{lemma}
    \label{Hardy_inequality}
    Let $1\le p<\infty,$ $r>0,$ and $f$ be a non-negative measurable function $f.$ Then 
    \[
        \left(\int_0^\infty\left( \int_0^x f(y) \, dy\right)^p x^{-r-1} \, dx \right)^{\frac{1}{p}}
        \leq \frac{p}{r} \left( \int_0^\infty \left(y f(y) \right)^p y^{-r-1} \, dy \right)^{\frac{1}{p}},
    \]
    \[
    \left(\int_0^\infty\left( \int_x^\infty f(y) \, dy\right)^p x^{r-1} \, dx \right)^{\frac{1}{p}}
    \leq \frac{p}{r} \left( \int_0^\infty \left(y f(y) \right)^p y^{r-1} \, dy \right)^{\frac{1}{p}}.
    \]
\end{lemma}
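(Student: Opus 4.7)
The plan is to deduce both inequalities from a single template: rewrite the inner integral by a scaling change of variables so that the outer variable $x$ factors out, then apply Minkowski's integral inequality to interchange the order of integration, and finally collapse the resulting scalar integral to identify the constant $p/r$.

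For the first inequality, I would substitute $y = xt$ in the inner integral so that $\int_0^x f(y)\,dy = x \int_0^1 f(xt)\,dt$. The left-hand side then becomes
\[
\left(\int_0^\infty \left(\int_0^1 f(xt)\,dt\right)^p x^{p-r-1}\,dx\right)^{1/p},
\]
and Minkowski's integral inequality (for the measure $x^{p-r-1}\,dx$, or equivalently applied to the mapping $t \mapsto f(\cdot\, t)$ in $L^p(x^{p-r-1}\,dx)$) lets me pull the $\int_0^1 dt$ outside the $L^p$ norm. A second change of variables $u = xt$ in the inner integral then produces the factor $t^{-1 + r/p}$ and the expected $L^p$ norm of $y f(y)$ with respect to $y^{-r-1}\,dy$. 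The scalar $t$-integral $\int_0^1 t^{-1+r/p}\,dt$ equals $p/r$ since $r>0$, which supplies exactly the claimed constant.

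For the second inequality, the analogous move is the substitution $y = x/t$, which gives $\int_x^\infty f(y)\,dy = \int_0^1 f(x/t)\,x t^{-2}\,dt$. The left-hand side becomes
\[
\left(\int_0^\infty \left(\int_0^1 f(x/t)\,t^{-2}\,dt\right)^p x^{p+r-1}\,dx\right)^{1/p},
\]
and the same two steps (Minkowski's integral inequality, then the change of variables $u = x/t$ inside) again yield a scalar integral of the form $\int_0^1 t^{-1+r/p}\,dt = p/r$ multiplying the desired norm of $y f(y)$ against $y^{r-1}\,dy$.

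The calculations are routine bookkeeping with powers of $t$ and the Jacobians, so the only genuine step with any content is verifying that the scalar $t$-integral $\int_0^1 t^{-1 + r/p}\,dt$ converges—which is precisely where the hypothesis $r>0$ is used and where the sharp constant $p/r$ appears. No measurability or integrability subtleties arise because $f \ge 0$ makes every application of Fubini/Tonelli and Minkowski's integral inequality unambiguous.
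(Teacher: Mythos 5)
Your proof is correct, and it is the standard scaling-plus-Minkowski argument that yields the sharp constant $p/r$ (the computation $\int_0^1 t^{r/p-1}\,dt = p/r$ checks out in both cases, as do the Jacobians). The paper itself does not prove \Cref{Hardy_inequality} but simply cites the classical references (Hardy--Littlewood--P\'olya, Theorem 329, and Stein's appendix), where essentially this same argument appears, so there is nothing to compare beyond noting that your write-up is a faithful self-contained version of the cited proof.
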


The following Hardy type convolution inequality appears in \cite[Chapter IX, Section 9.9, Theorem 329, Eq. (9.9.7)]{HardyLittlewoodPolya1952}.

\begin{lemma}\label{Hardy_convolution_inequality}
    Let $1\le p<\infty,$ $\alpha,\beta >0,$ and $f$ be a non-negative measurable function $f.$ Then
    \[
    \left(
        \int_0^\infty
            \left( \int_x^\infty f(y) \frac{(y-x)^\beta}{(y-x)} \, dy\right)^p x^{\alpha-1}
        \, dx
    \right)^{\frac{1}{p}}
    \leq 
    \frac{\Gamma(\beta) \Gamma\left( \frac{\alpha}{p}\right) }{\Gamma\left( \beta + \frac{\alpha}{p}\right)}
    \left(
        \int_0^\infty
            \left(y^\beta f(y) \right)^p y^{\alpha-1}
        \, dy
    \right)^{\frac{1}{p}}.
    \]
\end{lemma}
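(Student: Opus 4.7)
The plan is to reduce the estimate to a scale-invariant form and then use Minkowski's integral inequality. First I would perform the substitution $y = xu$ in the inner integral, which exposes the homogeneity of the problem:
\[
\int_x^\infty f(y)(y-x)^{\beta-1}\,dy \;=\; x^\beta \int_1^\infty f(xu)(u-1)^{\beta-1}\,du.
\]
Inserting this into the left-hand side, the claim becomes an estimate for the $L^p\bigl((0,\infty), x^{\beta p+\alpha-1}\,dx\bigr)$ norm of the function $x \mapsto \int_1^\infty f(xu)(u-1)^{\beta-1}\,du$.

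Next I would apply Minkowski's integral inequality in the variable $u\in(1,\infty)$ to interchange the integration in $u$ with the $L^p$ norm in $x$. After that, the substitution $z=xu$ in the resulting inner $L^p$ integral produces a clean factor $u^{-\beta-\alpha/p}$ and decouples $f$ from the kernel, leaving a bound of the form
\[
\left(\int_0^\infty f(z)^p z^{\beta p+\alpha-1}\,dz\right)^{\!1/p} \cdot \int_1^\infty (u-1)^{\beta-1}\,u^{-\beta-\alpha/p}\,du.
\]
The first factor is precisely the right-hand side of the claim, so it remains only to identify the constant.

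The last step is to evaluate the $u$-integral by the change of variable $v=1/u$, which transforms it into the classical Beta integral
\[
\int_1^\infty (u-1)^{\beta-1}\,u^{-\beta-\alpha/p}\,du \;=\; \int_0^1 v^{\alpha/p-1}(1-v)^{\beta-1}\,dv \;=\; B\!\left(\tfrac{\alpha}{p},\beta\right) \;=\; \frac{\Gamma(\alpha/p)\,\Gamma(\beta)}{\Gamma(\beta+\alpha/p)},
\]
matching exactly the constant in the statement.

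No serious obstacle arises here. The integrability at $u=1$ is ensured by $\beta>0$, and the integrability at $u=\infty$ by $\alpha/p>0$; the cases $p=1$ collapses to Fubini's theorem, which is valid since $f$ is assumed nonnegative. If $\beta=1$ the kernel simplifies and the argument still yields $B(\alpha/p,1)=p/\alpha$, consistent with the second Hardy inequality of \Cref{Hardy_inequality}.
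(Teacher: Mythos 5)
Your argument is correct: the substitution $y=xu$ exposes the homogeneity, Minkowski's integral inequality applies since $f\geq 0$ and $1\leq p<\infty$, and the change of variables $v=1/u$ correctly identifies the constant as $B(\alpha/p,\beta)=\Gamma(\beta)\Gamma(\alpha/p)/\Gamma(\beta+\alpha/p)$. The paper does not prove this lemma but simply cites Hardy--Littlewood--P\'olya (Theorem 329, Eq.\ (9.9.7)), whose classical proof is exactly this homogeneous-kernel/Minkowski argument, so your proposal matches the intended route.
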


Finally we include the following discrete weighted Hardy inequalities. They can be found in \cite[Section2, Theorem 1, Eq. (2.14)]{HL}, see also \cite[Eq. (1') and (2'')]{Leindler}. 

\begin{lemma}
    \label{Hardy_discrete_inequality}
    Let $1\le p<\infty$ and $r>0$. There is $C>0$ such that for every  positive sequence $\{a_n\}_{n\in \NN}$, it holds that
    \[
        \left(\sum_{n=1}^\infty\left( \sum_{j=1}^n a_j \right)^p n^{-r-1} \right)^{\frac{1}{p}}
        \leq C \left( \sum_{n=1}^\infty \left(n a_n \right)^p n^{-r-1} \right)^{\frac{1}{p}},
    \]
    \[
        \left(\sum_{n=1}^\infty\left( \sum_{j=n+1}^\infty a_j \right)^p n^{r-1} \right)^{\frac{1}{p}}
        \leq C
         \left( \sum_{n=1}^\infty \left(n a_n \right)^p n^{r-1} \right)^{\frac{1}{p}}.
    \]
\end{lemma}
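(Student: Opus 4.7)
The plan is to derive both discrete inequalities from the continuous versions in Lemma~\ref{Hardy_inequality} via a standard step-function embedding.

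For the first inequality I would define the non-negative measurable function $f(y) := a_n$ for $y \in [n-1, n)$ with $n \geq 2$, and $f(y) := 0$ for $y \in [0,1)$. Since for $x \in [n, n+1)$ with $n \geq 2$ one has $\int_0^x f(y)\, dy \geq \sum_{j=2}^n a_j$ and $x^{-r-1} \geq C\,n^{-r-1}$ on this interval, the integral $\int_0^\infty \bigl(\int_0^x f\bigr)^p x^{-r-1}\, dx$ dominates (up to a constant) $\sum_{n=2}^\infty \bigl(\sum_{j=2}^n a_j\bigr)^p n^{-r-1}$. The continuous Hardy inequality then bounds this integral by $C \int_0^\infty (y f(y))^p y^{-r-1}\, dy = C \sum_{n=2}^\infty a_n^p \int_{n-1}^n y^{p-r-1}\, dy$; by monotonicity of $y^{p-r-1}$ on $[n-1,n]$ (handled separately according to the sign of $p-r-1$), one has $\int_{n-1}^n y^{p-r-1}\, dy \leq C n^{p-r-1}$ for $n \geq 2$, producing exactly $C \sum_{n=2}^\infty (n a_n)^p n^{-r-1}$. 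Finally, the $n = 1$ contribution is absorbed: combining $\bigl(\sum_{j=1}^n a_j\bigr)^p \leq C \bigl(\sum_{j=2}^n a_j\bigr)^p + C a_1^p$ (convexity, $p \geq 1$) with the convergent series $\sum_{n \geq 2} n^{-r-1}$ (since $r > 0$), the excess terms are all controlled by the $n=1$ summand $(1 \cdot a_1)^p \cdot 1^{-r-1}$ of the right-hand side.

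The second inequality follows an analogous scheme. I would set $g(y) := a_n$ for $y \in [n, n+1)$, $n \geq 1$, and $g(y) := 0$ on $[0,1)$; then for $x \in [n, n+1)$ one has $\int_x^\infty g(y)\, dy \geq \sum_{j=n+1}^\infty a_j$, and $\int_n^{n+1} x^{r-1}\, dx \geq C n^{r-1}$, so the left-hand side is bounded (up to a constant) by $\int_0^\infty \bigl(\int_x^\infty g(y)\, dy\bigr)^p x^{r-1}\, dx$. The second continuous Hardy inequality then bounds this by $C \int_0^\infty (y g(y))^p y^{r-1}\, dy = C \sum_{n=1}^\infty a_n^p \int_n^{n+1} y^{p+r-1}\, dy$, and because $p + r - 1 \geq 0$ one has $\int_n^{n+1} y^{p+r-1}\, dy \leq C n^{p+r-1}$ for every $n \geq 1$, yielding $C \sum_{n=1}^\infty (n a_n)^p n^{r-1}$ as required.

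The main obstacle lies in the boundary behaviour at $n = 1$ of the first inequality: a naive step function supported on all of $[0,\infty)$ generates the term $a_1^p \int_0^1 y^{p-r-1}\, dy$, which diverges whenever $p \leq r$ and therefore fails to reproduce the correct $(1 \cdot a_1)^p$ constant. Truncating $f$ to $[1,\infty)$ and extracting the $a_1$-piece via convexity circumvents this difficulty, while the second inequality exhibits no analogous obstruction because the exponent $y^{p+r-1}$ is nonnegative, making all the local integrals that arise trivially comparable to $n^{p+r-1}$.
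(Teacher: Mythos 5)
Your proof is correct. Note that the paper itself does not prove this lemma: it simply cites it from the literature (\cite{HL}, \cite{Leindler}), so there is no in-paper argument to compare against; your step-function transference to the continuous inequalities of \Cref{Hardy_inequality} is a sound and self-contained way to obtain it. Each step checks out: the lower bound $\int_0^x f \geq \sum_{j=2}^n a_j$ on $[n,n+1)$, the comparison $\int_{n-1}^{n} y^{p-r-1}\,dy \leq C n^{p-r-1}$ for $n\geq 2$ (valid for either sign of $p-r-1$), and the convexity splitting $\bigl(\sum_{j=1}^n a_j\bigr)^p \leq 2^{p-1}\bigl(a_1^p + \bigl(\sum_{j=2}^n a_j\bigr)^p\bigr)$ together with $\sum_{n\geq 2} n^{-r-1}<\infty$ to absorb the $a_1$-term into the $n=1$ summand of the right-hand side. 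One small simplification worth noting: if in the first inequality you instead place $a_n$ on the interval $(n,n+1]$ (and set the function to zero on $(0,1]$), then $\int_0^x \tilde a \geq \sum_{j=1}^n a_j$ for $x\in(n+1,n+2]$ and every local weight integral is $\int_n^{n+1} y^{p-r-1}\,dy \leq \max\{1,2^{p-r-1}\}\, n^{p-r-1}$, including at $n=1$; this removes the boundary obstruction you describe and makes the convexity step unnecessary. Your treatment of the second inequality is likewise correct, and you rightly observe that no boundary issue arises there since $p+r-1>0$.
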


\printbibliography 


\end{document}